\documentclass[11pt]{article}

\usepackage{amssymb,amsmath,amsthm,mathtools}
\allowdisplaybreaks[4]
\usepackage{enumitem}
\usepackage{geometry}
\usepackage{changepage}
\usepackage{microtype}
\usepackage{tikz}
\definecolor{mutationone}{HTML}{D55E00}
\definecolor{mutationtwo}{HTML}{0072B2}
\definecolor{mutationthree}{HTML}{009E73}
\definecolor{tiltingfill}{HTML}{EEF4FB}
\definecolor{sourcefill}{HTML}{FFF3D6}
\definecolor{targetfill}{HTML}{E9F7F3}
\usepackage[hidelinks]{hyperref}
\hypersetup{
  pdfauthor={Jiangsheng Hu, Xin Ma, Jinbi Zhang, Tiwei Zhao},
  pdftitle={Tilting realizations of derived-equivalent matrix centralizer algebras},
  pdfsubject={Tilting realizations of derived-equivalent matrix centralizer algebras},
  pdfkeywords={tilting module, matrix centralizer algebra, derived equivalence,
    fixed-source realization, weak order, Schreier multigraph}
}

\setenumerate[1]{itemsep=1pt,partopsep=0pt,parsep=\parskip,topsep=4pt}
\setitemize[1]{itemsep=1pt,partopsep=0pt,parsep=\parskip,topsep=4pt}
\setlist[itemize]{leftmargin=34pt}
\setlist[enumerate]{leftmargin=34pt}
\numberwithin{equation}{section}

\newcommand{\EnglishTitle}{Tilting realizations of derived-equivalent matrix centralizer algebras}
\newcommand{\PaperAuthorsENname}{Jiangsheng Hu$^{1}$,
Xin Ma$^{2}$, Jinbi Zhang$^{3}$ and Tiwei Zhao$^{4,\dagger}$}

\theoremstyle{plain}
\newtheorem{theorem}{Theorem}[section]
\newtheorem{lemma}[theorem]{Lemma}
\newtheorem{proposition}[theorem]{Proposition}
\newtheorem{corollary}[theorem]{Corollary}
\theoremstyle{definition}
\newtheorem{definition}[theorem]{Definition}
\newtheorem{example}[theorem]{Example}
\theoremstyle{remark}
\newtheorem{remark}[theorem]{Remark}
\newcommand{\stt}{\mathrm{s}\tau\text{-}\mathrm{tilt}\,}
\newcommand{\tiltA}{\mathrm{tilt}_1\,}
\newcommand{\Fac}{\operatorname{Fac}}
\newcommand{\rad}{\operatorname{rad}}
\newcommand{\add}{\operatorname{add}}
\newcommand{\Irr}{\operatorname{Irr}}
\newcommand{\Weak}{\operatorname{Weak}}
\newcommand{\ann}{\operatorname{ann}}
\newcommand{\ms}[1]{\{\!\{#1\}\!\}}
\newcommand{\cO}{\mathcal O}
\newcommand{\Db}{D^{\mathrm b}}
\newcommand{\Kb}{K^{\mathrm b}}
\newcommand{\op}{\mathrm{op}}
\DeclareMathOperator{\End}{End}
\DeclareMathOperator{\Hom}{Hom}
\DeclareMathOperator{\Ext}{Ext}
\DeclareMathOperator{\Tor}{Tor}
\DeclareMathOperator{\pd}{pd}
\DeclareMathOperator{\extdim}{ext.dim}

\title{\EnglishTitle}
\author{\PaperAuthorsENname}
\date{}

\begin{document}
\maketitle
\thispagestyle{empty}

\begin{center}
\footnotesize
$^{1}$School of Mathematics, Hangzhou Normal University,
Hangzhou 311121, P. R. China\\[-0.5mm]
E-mail: \href{mailto:hujs@hznu.edu.cn}{hujs@hznu.edu.cn}

\vspace{0.2mm}
$^{2}$College of Science, Henan University of Engineering, Zhengzhou 451191, P. R. China\\[-0.5mm]
E-mail: \href{mailto:maxin@haue.edu.cn}{maxin@haue.edu.cn}

\vspace{0.2mm}
$^{3}$School of Mathematical Sciences, Anhui University, Hefei 230601, P. R. China\\[-0.5mm]
E-mail: \href{mailto:zhangjb@ahu.edu.cn}{zhangjb@ahu.edu.cn}

\vspace{0.2mm}
$^{4}$School of Artificial Intelligence, Jianghan University,
Wuhan 430056, P. R. China\\[-0.5mm]
E-mail: \href{mailto:tiweizhao@jhun.edu.cn}{tiweizhao@jhun.edu.cn}

\vspace{0.2mm}
$^{\dagger}$Corresponding author
\end{center}

\vspace{1mm}
\begin{adjustwidth}{1cm}{1cm}
\noindent\footnotesize
\textbf{Abstract.}
Let $A$ be the centralizer algebra of a matrix over an arbitrary field.
We solve the fixed-source realization problem for matrix centralizers by
proving that the matrix centralizer algebras derived equivalent to $A$ are
precisely the opposite endomorphism algebras of tilting modules over $A$.
We classify the basic tilting modules and determine their opposite
endomorphism algebras. The tilting poset is a product of right weak orders
on symmetric groups, with one factor for each primary block and degree equal
to the number of distinct exponents in that block. Together with the center,
this poset recovers the multiset of these numbers across all primary blocks,
although it does not canonically match them with the local center factors.
For each primary block, the target algebras are obtained by permuting the
successive gaps between exponents, and their isomorphism classes are
determined by the stabilizer of the gap word. Consequently, the quotient of
the labeled mutation graph by target-algebra isomorphism is a Schreier
multigraph. We also characterize when the weak-order orientation descends
to its nonloop edges.
\par\vspace{1mm}
\noindent
\textbf{2020 Mathematics Subject Classification.}
Primary 16D90, 16E35, 16G10; Secondary 15A27, 05E18, 06A07.

\vspace{1mm}
\noindent
\textbf{Keywords.}
tilting module; matrix centralizer algebra; derived equivalence;
fixed-source realization; weak order; successive gaps; Schreier multigraph.
\end{adjustwidth}

\section{Introduction}\label{sec:introduction}

Let $R$ be a field, and let
$A=A_0,A_1,\ldots,A_r=B$ be finite-dimensional $R$-algebras. Suppose that,
for each $i$, there is a tilting left $A_{i-1}$-module $T_i$ such that
$
 A_i\simeq\End_{A_{i-1}}(T_i)^{\op}.
$
Here $\End_{A_{i-1}}(T_i):=\Hom_{A_{i-1}}(T_i,T_i)$ is the algebra of
$A_{i-1}$-module endomorphisms of $T_i$, with multiplication given by
composition, and $(-)^{\op}$ denotes the opposite algebra.
Although every step is induced by a module, Rickard's derived Morita theorem
\cite[Theorem~6.4]{Rickard} represents the composite equivalence over $A$
by a tilting complex. This complex need not be concentrated in degree zero.
The \emph{fixed-source realization problem} asks whether the target algebra
can nevertheless be realized by a single tilting module over the original
source, that is, whether there is a tilting $A$-module $T$ such that
$
 \End_A(T)^{\op}\simeq B.
$
The problem concerns the realization of the target algebra, not the composite
equivalence itself.

We solve this problem when both $A$ and $B$ are matrix centralizer algebras.
Let $c\in M_n(R)$, and write
\[
 S_n(c,R):=\{a\in M_n(R)\mid ac=ca\}.
\]
Regard $R^n$ as an $R[x]$-module by letting $x$ act as $c$. Then
$S_n(c,R)\simeq\End_{R[x]}(R^n)$.

Let $\Irr(c)$ be the set of monic irreducible factors of the minimal
polynomial. For $f\in\Irr(c)$, set
\[
 P_c(f):=\{a\geq1\mid f^a\text{ occurs as an elementary divisor}\},
 \ \  \kappa_c(f):=|P_c(f)|.
\]
If $P_c(f)=\{p_1<\cdots<p_s\}$, put
$g(P_c(f)):=(p_1,p_2-p_1,\ldots,p_s-p_{s-1})$.
Li--Xi characterize when two matrix centralizers are derived equivalent in
terms of $D$-equivalence 
(see \cite[Theorem~1.1 and Definition~3.1(2)]{LiXiD}). For each pair of
corresponding primary blocks, their criterion requires the local coefficient
algebras to be isomorphic and the unordered multisets of successive gaps to
agree. Thus it determines which matrix centralizers can occur as the target
$B$, but it does not produce a single tilting module over the original source.
Indeed, their adjacent-gap construction, combined with Hu--Xi's
$\mathcal D$-split construction, gives a chain of module-induced equivalences
whose source
changes at every step (see \cite[Lemmas~2.11--2.12]{LiXiD} and
\cite[Theorem~3.5 and Lemma~3.4]{HuXi}). Over the original source, the
composite is represented by a tilting complex, which need not be a module.
Our main result shows that, within the class of matrix centralizers, the
target algebra can always be realized by a tilting module.

\begin{theorem}\label{thm:intro-one-step}
Let $A=S_n(c,R)$ and $B=S_m(d,R)$. The following statements are equivalent.
\begin{enumerate}[label={\textup{(\arabic*)}}]
\item $A$ and $B$ are derived equivalent as $R$-algebras.
\item $c$ and $d$ are $D$-equivalent in the sense of Li--Xi.
\item There is a tilting left $A$-module $T$ such that
$\End_A(T)^{\op}\simeq B$.
\end{enumerate}
If $B$ is basic, then $T$ may be chosen basic. Likewise, there is a tilting
left $B$-module $U$ such that $\End_B(U)^{\op}\simeq A$; if $A$ is basic,
then $U$ may be chosen basic.
\end{theorem}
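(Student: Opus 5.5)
The plan is to prove the cycle (3)$\Rightarrow$(1)$\Rightarrow$(2)$\Rightarrow$(3). The first two implications are available already. (3)$\Rightarrow$(1) is Happel's theorem (equivalently the case of Rickard's theorem \cite[Theorem~6.4]{Rickard} for a tilting module concentrated in degree zero). (1)$\Leftrightarrow$(2) is the Li--Xi criterion \cite[Theorem~1.1]{LiXiD}. So the content lies in (2)$\Rightarrow$(3), together with the basic and symmetric refinements.

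For (2)$\Rightarrow$(3), I would first reduce to a single primary block. Decompose $R^n=\bigoplus_{f\in\Irr(c)}V_f$ into primary components, so that $A\simeq\prod_f A_f$ with $A_f=\End_{R[x]}(V_f)$. A tilting module over a finite product is a product of tilting modules over the factors, and endomorphism algebras split accordingly. $D$-equivalence matches primary blocks $f\leftrightarrow f'$ with isomorphic local coefficient algebras $R[x]/(f)\simeq R[x]/(f')$ and equal multisets of gaps. Next, Morita reduction lets me replace $V_f$ by the basic module $M=\bigoplus_{p\in P_c(f)}R[x]/(f^p)$ and $A_f$ by the basic algebra $E_P=\End(M)$. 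Further, completing at $f$ and using that $R[x]_{(f)}$-modules of finite length depend only on the complete local ring, I may assume that the block is governed by the DVR $\cO=$ completion of $R[x]$ at $f$. Up to isomorphism, $E_P$ then depends only on $\cO$ and on $P$. Since the local coefficient algebras of $f$ and $f'$ are isomorphic, I expect the completed rings to be isomorphic as well: they are power series rings over the residue field when it is separable, and in general I would argue using the paper's identification. Thus the target block is $E_{Q}$ with $g(Q)$ a permutation of $g(P)$.

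Because adjacent transpositions generate the symmetric group, it suffices to realize $E_Q$ for every permutation $\sigma$ of the gap word \emph{by a single tilting module over $E_P$}, rather than by a chain. Here I would write down an explicit family $T_\sigma$. The indecomposable projectives of $E_P$ are $\Hom(M,\cO/f^{p_i})$. The Li--Xi adjacent swap replaces one summand $\cO/f^{p_i}$ by a cokernel/kernel of an approximation, which is an APR/BB-type mutation at one summand. I would show that these mutations commute with the weak-order structure. Concretely, $T_\sigma$ is built by taking, for the summand in position $i$, the module obtained from approximations by $\add$ of the projectives indexed by the inversion set of $\sigma$. I would verify that $\pd T_\sigma\le 1$ and $\Ext^1(T_\sigma,T_\sigma)=0$, and that the number of summands equals $\kappa$, using the uniserial structure of $\cO/f^{a}$ and explicit Hom dimensions $\min(a,b)\cdot\deg f$. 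Computing $\End(T_\sigma)^{\op}$ then reduces to computing lengths of Hom-spaces between the new summands. I would show that these match those between the summands of $M_{Q}$, with $Q$ having gap word $g(P)\circ\sigma$, and then identify the algebras via a presentation by generating maps (multiplication by $f$ and the canonical inclusions/projections).

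The basic claim follows because $T_\sigma$ has pairwise nonisomorphic summands and $E_Q$ is basic. If $B$ is basic, then $B\simeq\prod E_{Q}$, and the Morita reduction only changed $A$ up to Morita equivalence, so $T$ transports to a basic tilting $A$-module. The statement for $U$ follows by symmetry, since $D$-equivalence is symmetric. The main obstacle is the middle step: proving that an arbitrary permutation, not just an adjacent transposition, is realized by a \emph{module} $T_\sigma$ with $\Ext^1=0$, and identifying its endomorphism algebra. I would first try induction on the length of $\sigma$, showing that each further mutation of $T_\sigma$ stays a module precisely when the length increases, which is the weak-order behaviour predicted in the abstract.
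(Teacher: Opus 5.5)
\textbf{There is a genuine gap in (2)$\Rightarrow$(3).} The implications (3)$\Rightarrow$(1) and (1)$\Leftrightarrow$(2) are fine, and so is the reduction to basic primary blocks. The new content of the theorem, however, is not proved: realizing $A_{g\cdot w}$ for an \emph{arbitrary} $w\in\Sigma_s$ by one tilting module over the fixed source $A_g$. This is exactly the step you call the main obstacle, and the tools you propose do not close it as stated.

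Your $T_\sigma$ (``approximations by the projectives indexed by the inversion set'') is not specified precisely enough to check $\pd T_\sigma\le1$ or $\Ext^1(T_\sigma,T_\sigma)=0$. Matching lengths of Hom-spaces with those between the summands of $M_Q$ also cannot identify $\End(T_\sigma)^{\op}$ with $A_{g\cdot\sigma}$. Dimension data does not determine an algebra, and you have no presentation of $\End(T_\sigma)$ to compare against. Two ideas are missing.

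\emph{First}, you must know that the label-$i$ mutation of the current module over $A_g$ is again a module. The paper gets this from the labeled weak-order model (Proposition~\ref{lem:labeled-model}), built in three steps:
\begin{enumerate}
\item every tilting module contains the faithful projective-injective $A_ge_s$, so Jasso reduction passes to $A_g/A_ge_sA_g$;
\item central reduction by $z_\pi$ then passes to $B_{s-1}(k)$;
\item comparison with the Auslander algebra $\Gamma_s(k)$ imports Iyama--Zhang's labels.
\end{enumerate}

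\emph{Second}, the endomorphism algebra should not be computed over $A_g$ at all. The equivalence $F_w=(\phi_w)_*\circ\mathbf R\!\Hom_{A_g}(T_w,-)$ sends $T_w$ to $A_{g\cdot w}$ with labels preserved. It also sends the exchange triangle at $T_{w,i}$ to the exchange triangle at $A_{g\cdot w}e_i$ given by a minimal left approximation. By uniqueness of minimal approximations, the image is the Hu--Xi adjacent swap $L_i(g\cdot w)$ of Proposition~\ref{lem:adjacent-gap}. This gives $\End(T_{ws_i})^{\op}\simeq A_{g\cdot ws_i}$ with labels preserved, and that is what makes your induction on $\ell(\sigma)$ run.

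For existence alone, you could bypass the weak-order model. Each almost complete tilting module contains the faithful module $A_ge_s$, so by Happel--Unger it has exactly two complements. A duality argument using $\Hom_{A_h}(-,A_h)$ and $\iota_h$ then shows that the left and the right mutation of $A_h$ at label $i$ both have opposite endomorphism algebra $A_{h\cdot s_i}$. So you would not need to know which direction the tilting mutation goes.

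\textbf{Two smaller omissions.} Statement (3) asks for $\End_A(T)^{\op}\simeq B$ with $B$ possibly nonbasic, but your construction only produces the basic algebra $B^{\mathrm b}$. You need the Morita-lifting step (Lemma~\ref{lem:Morita-lifting}): choose $T\in\add X$ corresponding under $\Hom_A(X,-)$ to a progenerator $P$ over $B^{\mathrm b}$ with $\End(P)^{\op}\simeq B$.

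The passage to completions is unnecessary, and it is not justified when the residue field is inseparable. $D$-equivalence already matches the truncated coefficient algebras $R[x]/(f^{r_c(f)})$. By Lemma~\ref{lem:coefficient-transport}, the block algebra depends only on that truncated algebra and the ordered gap word.
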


The equivalence between \textup{(1)} and \textup{(2)} is Li--Xi's theorem,
while
\textup{(3)}~$\Longrightarrow$~\textup{(1)} follows from Rickard's derived
Morita theorem. Thus the new implication is
\textup{(1)}~$\Longrightarrow$~\textup{(3)}: every matrix centralizer in the
derived equivalence class of $A$ is realized over the fixed source $A$. Morita-theoretic lifting argument adjusts the multiplicities of the tilting summands to realize nonbasic targets up to isomorphism.
Corollary~\ref{cor:closure-exhaustion} also shows that
$\End_A(T)^{\op}$ is again a matrix centralizer for every tilting
$A$-module $T$. Consequently, the opposite endomorphism algebras of tilting
$A$-modules are precisely the matrix centralizers derived equivalent to $A$.

This result concerns only targets that are matrix centralizers. It makes no
claim about arbitrary finite-dimensional algebras in the
derived equivalence class of $A$, and it does not claim that a given
triangle equivalence is induced by the module $T$.

Theorem~\ref{thm:intro-one-step} has two further consequences. First, the
derived equivalence induced by $T$, together with the Brenner--Butler
theorem \cite{BrennerButler}, gives an equivalence between
$B\text{-}\mathrm{mod}$ and the HRS tilt \cite{HRS} of
$A\text{-}\mathrm{mod}$ at the torsion pair
$
 \bigl(\Fac T,\ker\Hom_A(T,-)\bigr).
$ Here $\Fac T$ is the full subcategory of factor modules of finite direct sums of copies of $T$. 
Thus, for every matrix centralizer $B$ derived equivalent to $A$, the category
$B\text{-}\mathrm{mod}$ is equivalent to such an HRS tilt; see
Corollary~\ref{cor:one-step-hrs}. Second, since $T$ has
projective dimension at most one, the extension dimensions of $A$ and $B$,
in the sense of \cite{ZhangZhengExt}, differ by at most one; see
Corollary~\ref{cor:extension-dimension-bound}.
Both statements follow from standard tilting theory. The contribution of
Theorem~\ref{thm:intro-one-step} is that they hold for every matrix
centralizer in the derived equivalence class of $A$.

To obtain a module over the original algebra instead of the tilting complex
arising from the changing-source construction, we first classify all basic
tilting modules over that fixed source, up to isomorphism, and then determine
their opposite endomorphism algebras. We write
$\tiltA A$ for the poset of isomorphism classes of basic tilting left
$A$-modules, ordered by
\[
 T\leq T'\quad\Longleftrightarrow\quad \Fac T\subseteq\Fac T'.
\]
We also write $\Weak(\Sigma_r)$ for the right weak
order on the symmetric group $\Sigma_r$, and $\ms{\cdots}$ for a multiset,
including repetitions.

We first determine the tilting poset, which gives the fixed-source parameter
space for the realization problem.

\begin{theorem}
\label{thm:intro-poset}
For every $c\in M_n(R)$, one has
\[
 \tiltA S_n(c,R)\simeq
 \prod_{f\in\Irr(c)}\Weak(\Sigma_{\kappa_c(f)}).
\]
The abstract poset determines exactly the multiset
$\ms{\kappa_c(f)\mid f\in\Irr(c),\ \kappa_c(f)\geq2}$. Together with the
center, it determines the full multiset
$\ms{\kappa_c(f)\mid f\in\Irr(c)}$.
\end{theorem}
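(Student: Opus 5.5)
We reduce to a single primary block, identify the block with an endomorphism algebra over a truncated polynomial ring, classify its tilting modules through the known $\tau$-tilting theory of such algebras, and finally read the multiset off the abstract poset.

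\textbf{Reduction to one primary block.} Decompose $R^n=\bigoplus_{f\in\Irr(c)}V_f$ into primary components as an $R[x]$-module. Then
\[
S_n(c,R)\simeq\prod_f\End_{R[x]}(V_f).
\]
Tilting modules over a finite product of algebras are direct sums of tilting modules over the factors, and $\Fac$ is compatible with this decomposition. Hence $\tiltA$ of the product is the product of the posets $\tiltA$ of the factors. Both sides of the claimed isomorphism therefore factor over $\Irr(c)$, and it suffices to treat one $f$ with $P_c(f)=\{p_1<\cdots<p_s\}$.

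\textbf{Morita reduction.} Up to Morita equivalence, which preserves the tilting poset, we may keep one copy of each indecomposable summand. Let $\Lambda=R[x]/(f^{p_s})$, a local uniserial ring. The local block is then Morita equivalent to
\[
E=\End_\Lambda\Bigl(\bigoplus_{i=1}^sR[x]/(f^{p_i})\Bigr),
\]
the Auslander-type algebra of a chain of uniserial modules. Set $D=R[x]/(f)$, a field extension of $R$.

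\textbf{The local classification.} I would show that $E$ is a quasi-hereditary/Nakayama-like algebra whose simple modules are indexed by $1,\ldots,s$. The key step is to prove $\tiltA E\simeq\Weak(\Sigma_s)$, imitating the known result for $\End_{k[x]/(x^n)}(\bigoplus_{i\le n}k[x]/(x^i))$ (Geuenich, Iyama--Zhang, Mizuno). First, using $\operatorname{gl.dim}$ and projective-dimension computations, show that the tilting modules of projective dimension at most one coincide with the support $\tau$-tilting modules. Second, show that these correspond to the ideals $I_w$ generated by products of idempotent-ideals $I_{i}=E(1-e_i)E$ along reduced words of $w\in\Sigma_s$, in the manner of Mizuno's preprojective ideals. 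This requires verifying the braid relations $I_iI_{i+1}I_i=I_{i+1}I_iI_{i+1}$ and $I_iI_j=I_jI_i$ for $|i-j|\ge2$. It also requires showing that $I_i^2=I_i$ fails appropriately so that each mutation is a left or right exchange. Finally, show that mutation at $i$ of $I_w$ corresponds to $w\mapsto ws_i$, and that $\Fac$-inclusion is the right weak order. The argument should work over $D$ with gaps $p_{i+1}-p_i\geq1$ in place of gaps equal to one, since only the shape of the quiver and relations matters. This step is the main obstacle. I would first attempt it by checking that each $I_w$ is tilting of projective dimension $\le1$ and counting: there are $s!$ of them, and the connected exchange graph (via Adachi--Iyama--Reiten, $E$ being $\tau$-tilting finite) forces exhaustion.

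\textbf{Invariants of the poset.} $\Weak(\Sigma_r)$ is a lattice of rank $\binom r2$ with an atom count of $r-1$. A product of such bounded posets decomposes uniquely into directly indecomposable factors. Each $\Weak(\Sigma_r)$ with $r\ge2$ is directly indecomposable, by a Hasse-diagram argument: the factor of a product interval $[\hat0,a]$ would be visible as a square decomposition, and for $r\ge2$ the atoms of $\Weak(\Sigma_r)$ do not split into commuting families covering all of it. The factors $\Weak(\Sigma_1)$ are trivial. Unique factorization of finite connected posets with bottom (Hashimoto) then gives the multiset $\ms{\kappa_c(f)\mid\kappa_c(f)\ge2}$, with $r$ recovered from each factor via its number of atoms plus one.

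\textbf{Using the center.} The center $Z(S_n(c,R))=R[c]$ has $|\Irr(c)|$ local factors. Subtracting the number of factors with $\kappa\ge2$ gives the number of $f$ with $\kappa_c(f)=1$, which completes the full multiset.
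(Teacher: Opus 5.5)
Your reduction to one primary block, the Morita step, and the use of the center agree with the paper. Recovering $\ms{\kappa_c(f)\mid\kappa_c(f)\geq2}$ through Hashimoto's unique factorization and the indecomposability of $\Weak(\Sigma_r)$ is a valid, if heavier, alternative to the paper's direct count of components of the atom graph. In that graph two atoms are adjacent when the interval below their join has six elements, and your indecomposability argument amounts to the connectivity of this graph. The genuine gap is the local statement $\tiltA E\simeq\Weak(\Sigma_s)$, which carries all the content. The route you sketch for it does not work, for four reasons.

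\begin{enumerate}
\item Tilting modules are not the support $\tau$-tilting modules but exactly the \emph{faithful} ones (Adachi--Iyama--Reiten, Proposition~2.2(b)). Already for $s=1$ the pair $(0,E)$ is support $\tau$-tilting, and in general $\stt E\simeq\Weak(\Sigma_{s+1})$ has $(s+1)!$ elements rather than $s!$.
\item The ideal $I_i=E(1-e_i)E$ is generated by an idempotent, so $I_i^2=I_i$ always holds. Nothing can ``fail appropriately''.
\item The quiver with relations of $E$ does depend on the gaps, and the Mizuno/Iyama--Zhang ideal model does not transfer. Take $g=(1,2)$, that is, $E=\End_{k[t]/(t^3)}\bigl(k\oplus k[t]/(t^3)\bigr)$. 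Let $j$ be multiplication by $t^2$ from the first summand to the second, and let $p$ be the projection back. Then $t$ is a loop at vertex $2$ with $t^2=jp$ and $pj=tj=pt=0$. One computes $Ee_2Ee_1=Ej\simeq S_2$ and $\Omega S_2=\rad Ee_2=Et+Ep\simeq\Omega^2S_2$. Hence $\pd S_2=\infty$, so $E$ is not quasi-hereditary, and $I_1\simeq Ee_2\oplus S_2$ is not tilting. The actual mutation summand is $\operatorname{Coker}(Ee_1\xrightarrow{\cdot p}Ee_2)$, which is two-dimensional.
\item Counting $s!$ candidates and invoking connectivity gives exhaustion only after you show that the family is closed under mutation and that the relevant mutation graph is connected. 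Your plan supplies neither.
\end{enumerate}

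The missing idea is the pair of reductions the paper uses, applied in a specific order. The longest summand gives $Q=Ee_s$, which is faithful, projective and injective. In \textup{(T3)} the monomorphism $Ee_s\to T_0$ splits, so every tilting module contains $Q$. Conversely, every support $\tau$-tilting module containing $Q$ is faithful and hence tilting. Jasso reduction then gives $\tiltA E\simeq\stt(E/Ee_sE)$. Multiplication by $\pi$ is central and lies in the radical. Quotienting by it preserves $\stt$ (Eisele--Janssens--Raedschelders, and Hu--Li--Zhang in the arbitrary-field setting) and carries $E/Ee_sE$ to $B_{s-1}(k)$. Only at this point do the gaps disappear, and $\stt B_{s-1}(k)\simeq\Weak(\Sigma_s)$ is known. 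The order matters: central reduction preserves support $\tau$-tilting posets but not tilting slices, as $\Gamma_2(k)$ and $B_2(k)$ show. Your heuristic that ``only the shape matters'' is therefore true only for $\stt$ after the Jasso reduction, not for $\tiltA E$ directly.
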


For a primary block, let $Q$ be the faithful projective-injective module
associated with the longest uniserial summand. Every basic tilting module
contains $Q$ as a direct summand. Hence Jasso reduction gives an isomorphism
between the tilting poset and the support $\tau$-tilting poset of an
idempotent quotient
(see \cite[Theorem~3.15 and Corollary~3.16]{Jasso}). We then use the uniform
central quotient constructed in \cite[Theorem~3.1]{HLZtau}; see also
\cite[Theorem~11]{EJR}. It follows that the block contributes a factor
isomorphic to $\Weak(\Sigma_s)$, where $s$ is the number of distinct
exponents.
This extends the saturated Auslander case of Iyama--Zhang
\cite[Proposition~3.10, Theorem~3.18, and
Corollary~3.19]{IyamaZhang}. It also shows precisely how the support
$\tau$-tilting factor $\Weak(\Sigma_{s+1})$ from
\cite[Theorem~1.2]{HLZtau} reduces to the tilting factor
$\Weak(\Sigma_s)$. Since corresponding primary blocks in Li--Xi's criterion
have the same value of $s$, the tilting poset is a derived invariant among
matrix centralizers; see Corollary~\ref{cor:derived-invariance}.

For a primary block with $s$ distinct exponents, the factor
$\Weak(\Sigma_s)$ has $s!$ fixed-source tilting vertices, indexed by
$w\in\Sigma_s$; under this indexing, $w$ permutes the ordered gap data. Thus
Theorem~\ref{thm:intro-poset} gives the parameter space needed for the
realization problem. The abstract poset alone, however, does not determine
the target algebra at each vertex: it records neither the coefficient
algebra nor the gap word and therefore does not recover the individual
exponent values. Moreover, the center and the poset recover the full multiset
of the values $s$, but do not canonically pair these values with the local
center factors. Related reconstruction
questions for tilting and support $\tau$-tilting posets are studied in
\cite{HappelUngerReconstruction,AiharaKase,KaseInverse}.

To determine the targets, we keep track of the ordered gaps and mutation
labels. Let $\mathcal O$ be a finite-dimensional commutative local principal
$R$-algebra, let
$g=(g_1,\ldots,g_s)\in\mathbb Z_{>0}^s$, and assume that $\sum_i g_i$ is
the Loewy length of $\mathcal O$. Put
\[
 q_i=\sum_{j=1}^i g_j, \ 
 A_g=\End_{\mathcal O}\left(
       \bigoplus_{i=1}^s\mathcal O/\rad^{q_i}\mathcal O\right),
  \ \ (g\cdot w)_j=g_{w(j)}.
\]
For $1\leq i<s$, let $s_i=(i,i+1)$, and let $\ell$ denote the Coxeter
length on $\Sigma_s$.

The next theorem solves the fixed-source realization problem for one primary
block.

\begin{theorem}
\label{thm:intro-fixed}
There are basic tilting $A_g$-modules $\{T_w\mid w\in\Sigma_s\}$,
representing all elements of $\tiltA A_g$, such that
$
 T_1=A_g$ and
 $\End_{A_g}(T_w)^{\op}\simeq A_{g\cdot w}.$
If $\ell(ws_i)=\ell(w)+1$, then $T_{ws_i}$ is the downward irreducible
mutation of $T_w$ at the summand with label $i$.
\end{theorem}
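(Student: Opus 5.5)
We argue by induction on $\ell(w)$, building the $T_w$ explicitly as $\mathcal O$-modules via the Morita/Auslander-type description of $A_g$. Put $M_g=\bigoplus_{i=1}^s\mathcal O/\rad^{q_i}\mathcal O$, so that $A_g=\End_{\mathcal O}(M_g)$ and the indecomposable projective $A_g$-modules are $P_i=\Hom_{\mathcal O}(M_g,\mathcal O/\rad^{q_i}\mathcal O)$. The summand $P_s$ is the faithful projective-injective $Q$, since $q_s$ is the Loewy length. We start from $T_1=A_g=\bigoplus_i P_i$ and label the summand $P_i$ by $i$.

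For the inductive step, suppose $T_w$ is constructed with $\End_{A_g}(T_w)^{\op}\simeq A_{g\cdot w}$ and $\ell(ws_i)>\ell(w)$. We transport the problem along the derived equivalence $\Hom_{A_g}(T_w,-)$. Under it, $T_w$ becomes the regular module of $A_{g\cdot w}$, and the summand labelled $i$ becomes the projective $P'_i$ of $A_{g\cdot w}$ attached to $\mathcal O/\rad^{q'_i}$, where $q'_i$ are the partial sums of $g\cdot w$.

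On the target side we perform the elementary adjacent-gap mutation. For $h=g\cdot w$, we replace $P'_i$ by the cokernel $X_i$ of a minimal left $\add(A_h/P'_i)$-approximation $P'_i\to P'_{i-1}\oplus P'_{i+1}$. This is the $\mathcal D$-split/almost-split-type sequence coming from the $\mathcal O$-sequence
\[
 0\to\mathcal O/\rad^{q'_i}\to \mathcal O/\rad^{q'_{i-1}}\oplus\mathcal O/\rad^{q'_{i+1}}\to \mathcal O/\rad^{q'_{i-1}+q'_{i+1}-q'_i}\to0 .
\]
We then check three things: that $A_h/P'_i\oplus X_i$ is tilting, that it is the downward (left) mutation, and that its opposite endomorphism ring is $A_{h\cdot s_i}$. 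The last holds because swapping $h_i$ and $h_{i+1}$ replaces $q'_i$ by $q'_{i-1}+q'_{i+1}-q'_i$, and the new endomorphism ring is again $\End_{\mathcal O}$ of the modified generator, by the Hu--Xi $\mathcal D$-split lemma quoted in the introduction. The same computation works directly over $A_g$: we define $T_{ws_i}$ by replacing the summand labelled $i$ of $T_w$ with the cokernel of its minimal left $\add(T_w/\text{that summand})$-approximation. The exchange sequence is then the image of the one above, and its endomorphism algebra is $A_{g\cdot ws_i}$.

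The main obstacle is showing that this cokernel is again a \emph{module} and that the mutation is downward precisely when $\ell(ws_i)=\ell(w)+1$. In other words, the approximation must be injective. Equivalently, the summand labelled $i$ must not lie in $\Fac$ of the rest, and the module $X$ must satisfy $\pd X\le1$ and $\Ext^1(T,T)=0$. For this we use the poset isomorphism of Theorem~\ref{thm:intro-poset} through Jasso reduction at $Q$. Under the isomorphism with $\Weak(\Sigma_s)$, the irreducible mutations correspond to covers $w\lessdot ws_i$, and Riedtmann--Schofield/Happel--Unger theory says each summand admits exactly one of an upward or a downward exchange. We must also match the label $i$ with the generator $s_i$, which we do by tracking $g\cdot w$ through the explicit sequence above. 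Finally, distinct $w$ give non-isomorphic $T_w$, because they occupy distinct vertices of the connected Hasse graph of $\Weak(\Sigma_s)$. Since every element of $\Sigma_s$ is reached from $1$ by length-increasing steps, $\{T_w\}$ exhausts $\tiltA A_g$, which has cardinality $s!$ by Theorem~\ref{thm:intro-poset}. Well-definedness, independent of the reduced word, follows because each $T_w$ is the unique tilting module at the vertex $w$ of the poset.
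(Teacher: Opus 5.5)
The gap is in the step you flag as the main obstacle, and the argument you give for it does not close it. Your transport step is sound and is how the paper computes the targets. Once you know that $T_{ws_i}$ is the downward mutation of $T_w$ at the summand with label $i$, the functor $F_w=\mathbf R\!\Hom_{A_g}(T_w,-)$, together with a label-preserving isomorphism $\End_{A_g}(T_w)^{\op}\simeq A_{g\cdot w}$, carries the exchange triangle to the one for the regular $A_h$-module at $e_i^h$, with $h=g\cdot w$. The adjacent-gap sequence then gives $A_{g\cdot ws_i}$. (A minor point: $\Hom_{\cO}(M_g,-)$ produces right $A_g$-modules, so you need the idempotent-preserving $A_h\simeq A_h^{\op}$ of Lemma~\ref{lem:self-opposite} to work with left modules.)

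What is still missing is a proof of three things:
\begin{enumerate}
\item for $\ell(ws_i)=\ell(w)+1$, the minimal left approximation of the summand of $T_w$ with label $i$ is injective;
\item the result depends only on $w$, not on the reduced word;
\item the $T_w$ exhaust $\tiltA A_g$.
\end{enumerate}
The abstract isomorphism $\tiltA A_g\simeq\Weak(\Sigma_s)$ of Theorem~\ref{thm:intro-poset} matches Hasse edges with covers. It does not say which summand of your inductively built $T_w$ is exchanged along the cover indexed by $s_i$, nor that your $T_w$ sits at the vertex $w$. Labels defined by inheritance along mutation paths need not be path-independent. For the path algebra of type $A_2$, the two descending chains around the pentagon of two-term silting complexes reach $A[1]$ with the two inherited labels interchanged. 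So ``the unique tilting module at the vertex $w$'' is circular here.

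``Tracking $g\cdot w$'' cannot repair this, because the gap word does not see direction. If $T'$ is the upward exchange of $T_w$ at label $i$, then $T_w$ is the downward exchange of $T'$ at label $i$. Hence both directions change the target gap word by $s_i$. Whether the exchange of $T_w$ is a module depends on $F_w(A_g)$, that is, on $T_w$ as an $A_g$-module, not only on $\End_{A_g}(T_w)^{\op}$. Worse, when $g$ has repeated entries the targets carry little or no label information. In the saturated Auslander case $g=(1,\ldots,1)$ one has $g\cdot w=g$ for every $w$.

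The paper supplies the missing ingredient as a labeled weak-order model, Proposition~\ref{lem:labeled-model}. It transports Iyama--Zhang's labeled model for $\Gamma_s(k)$ through the common quotient $A_g\to B_s(k)\leftarrow\Gamma_s(k)$:
\begin{enumerate}
\item central reduction preserves $g$-vectors, hence summand labels and mutation directions (Proposition~\ref{prop:central-reduction-labels});
\item Proposition~\ref{prop:faithful-slice} restricts this to the tilting slices, because the last projectives correspond.
\end{enumerate}
Only after the $T_w$ are fixed this way does the paper run your transport argument along a spanning tree. To avoid Iyama--Zhang, you would need an independent proof of two facts: inherited labels are consistent around every square and hexagon of the weak order, and they agree with the Coxeter labels at the top. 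Your sketch contains no such argument.
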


The labeled weak-order case shows that each fixed-source mutation corresponds
to an adjacent gap exchange with the same label. Thus every permutation of
the ordered gaps is realized by a tilting module over the original algebra
$A_g$, rather than only by a tilting complex obtained from a chain with
changed sources. Applying Theorem~\ref{thm:intro-fixed} to each primary block and then using Morita lifting yields the global fixed-source realization in Theorem~\ref{thm:intro-one-step}.

Theorem~\ref{thm:intro-fixed} naturally leads to a uniqueness question:
when do two basic tilting $A_g$-modules have isomorphic opposite
endomorphism algebras? 

For each value $a$ occurring in $g$, put
$
 I_a:=\{j\in\{1,\ldots,s\}\mid g_j=a\},
 \ \
 m_a:=|I_a|,
$
and let
$$
 H_g:=\operatorname{Stab}_{\Sigma_s}(g)
 =\{h\in\Sigma_s\mid g\cdot h=g\}
 =\prod_a\operatorname{Sym}(I_a),
$$
where $\operatorname{Sym}(I_a)$ denotes the symmetric group on $I_a$.
Thus $H_g$ permutes precisely the positions carrying equal gaps. 
For $u,v\in\Sigma_s$, the equality $g\cdot u=g\cdot v$ holds precisely
when $uv^{-1}\in H_g$, or equivalently, when $H_gu=H_gv$.
Consequently, the left coset space
$
 X_g:=H_g\backslash\Sigma_s
$
parametrizes the distinct gap words obtained from $g$ by permutation.

For $1\leq i<s$, right multiplication by $s_i$ induces an involution
$
 \rho_i:X_g\longrightarrow X_g,
 \ \
 \rho_i(H_gw)=H_gws_i.
$
We denote by
$
 \operatorname{Sch}
 \bigl(H_g\backslash\Sigma_s;s_1,\ldots,s_{s-1}\bigr)
$
the undirected labeled multigraph with vertex set $X_g$ and edge set
$
 \bigsqcup_{i=1}^{s-1}X_g/\langle\rho_i\rangle.
$
An orbit in the $i$th summand is an edge with label $i$; a singleton
orbit is a loop. The disjoint union retains edges with different labels,
and hence allows parallel edges. The coset-graph construction goes back to Schreier \cite{Schreier}.
For graph-theoretic treatments that retain loops and multiple edges, see
\cite[Section~2]{GrossSchreier} and \cite[p.~403]{Cannizzo}.

Our final main result answers the uniqueness question and identifies the quotient mutation graph.

\begin{theorem}\label{thm:intro-endomorphism-classes}
For $u,v\in\Sigma_s$, one has
$$
 \End_{A_g}(T_u)^{\op}\simeq\End_{A_g}(T_v)^{\op}
 \quad\Longleftrightarrow\quad
 H_gu=H_gv.
$$
Consequently, the labeled quotient of the mutation graph obtained by
identifying vertices with isomorphic opposite endomorphism algebras is
$
 \operatorname{Sch}
 \bigl(H_g\backslash\Sigma_s;s_1,\ldots,s_{s-1}\bigr).
$
The right weak-order orientation descends to the nonloop edges of this
quotient if and only if every $I_a$ is an interval in
$\{1,\ldots,s\}$.
\end{theorem}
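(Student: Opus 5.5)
By Theorem~\ref{thm:intro-fixed} we have $\End_{A_g}(T_w)^{\op}\simeq A_{g\cdot w}$. The first equivalence therefore reduces to a statement about the algebras $A_h$ for gap words $h$: we must show $A_h\simeq A_{h'}$ as $R$-algebras if and only if $h=h'$. Granting this, $A_{g\cdot u}\simeq A_{g\cdot v}$ holds iff $g\cdot u=g\cdot v$, and the paper's remark shows this holds iff $H_gu=H_gv$.

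The direction $h=h'\Rightarrow A_h\simeq A_{h'}$ is trivial. For the converse I would recover the gap word intrinsically from the basic algebra $A_h$, in three steps.
\begin{enumerate}[label={\textup{(\roman*)}}]
\item Since $\mathcal O$ is a finite-dimensional commutative local principal $R$-algebra, $\mathcal O\simeq R'[t]/(t^{N})$ with $R'$ a finite field extension of $R$ and $N=\sum_i h_i$ (this needs care, but it is a standard structure fact). The center of $A_h$ is $\mathcal O$, because $\mathcal O$ is a summand of the generator $M=\bigoplus_i\mathcal O/\rad^{q_i}\mathcal O$. Hence $N$ is an invariant of $A_h$.
\item The indecomposable projective $A_h$-modules $P_i=\Hom_{\mathcal O}(M,\mathcal O/\rad^{q_i})$ have $R$-dimensions
\[
\dim_R P_i=[R':R]\sum_j \min(q_i,q_j).
\]
These are strictly increasing in $i$, since $q_1<\cdots<q_s$. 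The ordering of the projectives by dimension is therefore intrinsic.
\item The sequence $(q_i)$, and hence $h$, is recovered from these dimensions: the successive differences $\dim P_{i+1}-\dim P_i$ are $[R':R]\,h_{i+1}$ times the number of $j$ with $j>i$, so $h$ is determined inductively.
\end{enumerate}
An algebra isomorphism preserves the center and the dimensions of indecomposable projectives, so $A_h\simeq A_{h'}$ forces $h=h'$. The main obstacle I expect is this invariant-recovery step, in particular handling $R'$ versus $R$ cleanly and ensuring that isomorphism over $R$ (rather than over $\mathcal O$) suffices.

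For the quotient graph, the mutation graph has vertex set $\Sigma_s$. By Theorem~\ref{thm:intro-fixed} and the fact that weak-order covers are exactly $w$ — $ws_i$, it has one edge labelled $i$ joining $w$ and $ws_i$ for each unordered pair $\{w,ws_i\}$. By the first part, identifying vertices with isomorphic endomorphism algebras is exactly the map $w\mapsto H_gw$. Edges labelled $i$ map to $\rho_i$-orbits on $X_g$. Here I must decide whether parallel label-$i$ edges collapse. I take the quotient labeled graph to record one edge per label per pair of classes (one per orbit), since a labeled quotient keeps only adjacency by label. This matches $\bigsqcup_i X_g/\langle\rho_i\rangle$, with loops exactly when $H_gws_i=H_gw$, i.e.\ $ws_iw^{-1}\in H_g$.

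For the orientation criterion, a nonloop edge $\{H_gw,H_gws_i\}$ receives orientations from all representatives. The orientation descends iff, whenever $\ell(ws_i)>\ell(w)$ and $H_gw'=H_gw$ with $ws_i\notin H_gw$, we also have $\ell(w's_i)>\ell(w')$.
\begin{itemize}
\item Each coset $H_gw$ has a unique minimal-length representative $w_0$, the one with $w_0^{-1}$ increasing on each $I_a$. Left multiplication by $h\in H_g$ changes inversions only among positions whose values lie in a common $I_a$. Whether $\ell(ws_i)>\ell(w)$ is the condition $w(i)<w(i+1)$.
\item If every $I_a$ is an interval, then $w(i)$ and $w(i+1)$ lying in different $I_a$'s (the nonloop condition) means their comparison is unchanged by $H_g$, since $H_g$ preserves the blocks, which are ordered intervals. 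So the orientation descends.
\item Conversely, if some $I_a$ is not an interval, choose $x<y<z$ with $x,z\in I_a$ and $y\notin I_a$. Take $w$ with $w(1)=x$, $w(2)=y$ and $w'=(x\,z)w$. Then $H_gw=H_gw'$ and the edge labelled $1$ is not a loop, yet the two representatives orient it oppositely.
\end{itemize}
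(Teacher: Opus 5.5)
Your proof is correct. It matches the paper's in the graph and orientation parts, but the key step, $\End_{A_g}(T_u)^{\op}\simeq\End_{A_g}(T_v)^{\op}\Rightarrow g\cdot u=g\cdot v$, takes a different route.

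\textbf{Key step.} The paper cites Li--Xi's Lemma~2.2 to get $M_{g\cdot u}\simeq M_{g\cdot v}$ as $\mathcal O$-modules. It then reads off the exponent sets by Krull--Schmidt. You instead recover the gap word from the algebra itself, using three facts:
\begin{enumerate}
\item $A_h$ is basic.
\item $\dim_R A_he_i^h=[k:R]\sum_j\min(q_i,q_j)$ is strictly increasing in $i$.
\item $\dim_R A_he_1^h=[k:R]\,s\,h_1$, and the consecutive differences are $[k:R](s-i)h_{i+1}$.
\end{enumerate}
Your route is self-contained and elementary. It also covers Morita equivalence at no extra cost, since Morita equivalent basic algebras are isomorphic. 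The paper's route is shorter on the page and recovers the module $M_h$ itself, at the cost of an external citation.

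\textbf{Two corrections to your step (i).} First, the claim $\mathcal O\simeq R'[t]/(t^N)$ is false over imperfect fields. Take $R=\mathbb{F}_p(a)$ and $\mathcal O=R[x]/((x^p-a)^2)$. Every lift $y=x+\epsilon$ of $\bar x$, with $\epsilon\in(x^p-a)$, satisfies $y^p=x^p=a+(x^p-a)\neq a$. So $\mathcal O$ has no coefficient field containing $R$. You never need this claim, however. The dimension formula only uses that $\{y\in M(v)\mid \pi^uy=0\}$ has composition length $\min(u,v)$, with all composition factors equal to $k=\mathcal O/\rad\mathcal O$. Second, the center step is unnecessary. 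All the algebras $A_{g\cdot w}$ are built over the same $\mathcal O$, so $[k:R]$ is a common factor and cancels.

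\textbf{Quotient graph.} Your convention of one $i$-labeled edge per pair of classes agrees with the paper's quotient by $H_g$-orbits of labeled edges. Indeed, if $\{H_gw,H_gws_i\}=\{H_gw',H_gw's_i\}$, then $\{w',w's_i\}=h\{w,ws_i\}$ for some $h\in H_g$. You should state this, so the identification with the Schreier multigraph does not rest on a choice of convention. Your orientation argument, both directions, is the paper's.
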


Each isomorphism class has
$|H_g|=\prod_a m_a!$ representatives. Hence there are exactly
$s!/\prod_a m_a!$ isomorphism classes of opposite endomorphism
algebras. An $i$-labeled edge at $H_gw$ is a loop precisely when
$(g\cdot w)_i=(g\cdot w)_{i+1}$. When the orientation descends, the
quotient is the usual parabolic weak order on multiset permutations;
its rank enumeration is given in
Corollary~\ref{cor:parabolic-quotient}.

Together with Li--Xi's derived-equivalence classification and changing-source
construction, our results yield a fixed-source classification of
matrix-centralizer targets by tilting modules. The weak-order vertices parametrize all isomorphism classes of basic tilting modules. For each primary block, the coefficient algebra and the ordered gap word determine the target algebra. The blockwise construction and Morita lifting give the global
realization. The stabilizer of the gap word determines when two parameters yield isomorphic target algebras. Related tilting and braid-group constructions are known in the saturated Auslander case \cite{Geuenich,Sauter}. Our results apply to arbitrary positive gap words and determine the resulting opposite
endomorphism algebras.

The paper is organized as follows. Section~\ref{sec:prelim} introduces the
necessary definitions and reduction results. Section~\ref{sec:global} proves
Theorem~\ref{thm:intro-poset}. Section~\ref{sec:gaps} constructs adjacent gap
exchanges and proves Theorem~\ref{thm:fixed-primary}, which contains
Theorem~\ref{thm:intro-fixed}. Section~\ref{sec:one-step} proves
Theorem~\ref{thm:intro-one-step} and shows that the opposite endomorphism
algebras of tilting $A$-modules are
precisely the matrix centralizers derived equivalent to $A$; see
Corollary~\ref{cor:closure-exhaustion}.
Finally, Section~\ref{sec:endomorphism-classes} proves
Theorem~\ref{thm:intro-endomorphism-classes} and describes the resulting
quotient graph and its orientation.

Throughout, all algebras are finite-dimensional associative unital
$R$-algebras, and all algebra isomorphisms, Morita equivalences, and derived
equivalences are $R$-linear. Unless otherwise stated, modules are finitely
generated left modules, and a tilting module means a classical tilting module
of projective dimension at most one. For an algebra $A$, we write
$A\text{-}\mathrm{mod}$ for the category of finitely generated left
$A$-modules and $\Db(A):=\Db(A\text{-}\mathrm{mod})$ for its bounded derived
category. For $X\in\Db(A)$, we denote its $i$th cohomology module by $H^i(X)$.

\section{Preliminaries}
\label{sec:prelim}

This section collects the conventions and structural results used throughout
the paper. We first fix the tilting, extension-dimension, and weak-order
notation, then state the reduction tools, and finally record the primary
decomposition and local structure of matrix centralizers.

\subsection{Tilting, extension dimension, and weak-order conventions}

\begin{definition}[{\cite[p.~399]{HappelRingel}}]
\label{def:tilting}
Let $A$ be an algebra. An $A$-module $T$ is a \emph{tilting module} if
\begin{enumerate}[label={\textup{(T\arabic*)}}]
\item $\pd_AT\leq1$;
\item $\Ext_A^1(T,T)=0$;
\item there is an exact sequence
\[
 0\longrightarrow A\longrightarrow T_0\longrightarrow T_1
 \longrightarrow0,
 \qquad T_0,T_1\in\add T.
\]
\end{enumerate}
\end{definition}

\begin{definition}[{\cite[Chapter~I, Section~2]{HRS}}]
\label{def:torsion-pair-hrs}
Let $A$ be an algebra. A pair $(\mathcal T,\mathcal F)$ of full
subcategories of $A\text{-}\mathrm{mod}$ is a \emph{torsion pair} if
$\Hom_A(X,Y)=0$ for all $X\in\mathcal T$ and $Y\in\mathcal F$, and every
$M\in A\text{-}\mathrm{mod}$ admits an exact sequence
\[
 0\longrightarrow M_{\mathcal T}\longrightarrow M
 \longrightarrow M_{\mathcal F}\longrightarrow0
\]
with $M_{\mathcal T}\in\mathcal T$ and
$M_{\mathcal F}\in\mathcal F$. Its \emph{Happel--Reiten--Smal\o\ tilt} is
the heart
\[
 \mathcal H(\mathcal T,\mathcal F)
 =\left\{X\in\Db(A)\ \middle|\
 \begin{array}{l}
 H^i(X)=0\text{ for }i\neq-1,0,\\
 H^{-1}(X)\in\mathcal F,\quad H^0(X)\in\mathcal T
 \end{array}\right\}.
\]
Equivalently,
$\mathcal H(\mathcal T,\mathcal F)
=\langle\mathcal F[1],\mathcal T\rangle_{\mathrm{ex}}$, where the
right-hand side denotes extension closure in $\Db(A)$.
\end{definition}

\begin{definition}[{\cite[Definition~2.3]{ZhangZhengExt}}]
\label{def:extension-dimension}
For full subcategories $\mathcal X$ and $\mathcal Y$ of
$A\text{-}\mathrm{mod}$, let $\mathcal X\diamond\mathcal Y$ be the additive
closure of the modules $M$ admitting an exact sequence
\[
 0\longrightarrow X\longrightarrow M\longrightarrow Y\longrightarrow0,
 \qquad X\in\mathcal X,\quad Y\in\mathcal Y.
\]
For an $A$-module $G$, set $[G]_0=\{0\}$, $[G]_1=\add G$, and
$[G]_n=[G]_1\diamond[G]_{n-1}$ for $n\geq2$. The \emph{extension
dimension} of $A$ is
\[
 \extdim(A)=\inf\{n\geq0\mid
 A\text{-}\mathrm{mod}=[G]_{n+1}\text{ for some }G\in
 A\text{-}\mathrm{mod}\}.
\]
In particular, $\extdim(A)=0$ if and only if $A$ is
representation-finite \cite[Lemma~2.4(1)]{ZhangZhengExt}.
\end{definition}

In $\tiltA A$, two modules are identified when they have the same additive
closure, and each class is represented by a basic module. Let $\Fac T$ be the
full subcategory of factor modules of
finite direct sums of copies of $T$. We use the generation order
$T\leq T'$ if and only if $\Fac T\subseteq\Fac T'$
\cite[Section~2.4]{AIR}.

\begin{definition}[{\cite[Definitions~0.1 and~0.3]{AIR}}]
\label{def:support-tau-tilting}
Let $\tau$ denote the Auslander--Reiten translation. A pair $(M,P)$, where
$P$ is projective, is \emph{$\tau$-rigid} if
$\Hom_A(M,\tau M)=0$ and $\Hom_A(P,M)=0$. It is a \emph{support
$\tau$-tilting pair} if, in addition, the number of nonisomorphic
indecomposable summands of $M\oplus P$ equals the number of simple
$A$-modules.
\end{definition}

Write $\stt A$ for the generation poset of basic modules $M$ occurring in
support $\tau$-tilting pairs. By
\cite[Proposition~2.2(b)]{AIR}, tilting modules of projective
dimension at most one are precisely the faithful support $\tau$-tilting
modules. Although \cite{AIR} is stated under its standing assumptions on the
base field, the proof of this proposition is valid over an arbitrary field.
Hence
\[
 \tiltA A=\{T\in\stt A\mid\ann_AT=0\}.
\]

\begin{definition}[{\cite[Definition~1.5 and Section~5.1]{AIR}}]
\label{def:two-term-silting} Let
$\Kb(A\text{-}\mathrm{proj})$ be the bounded homotopy category of finitely generated
projective left $A$-modules.
A complex $V\in\Kb(A\text{-}\mathrm{proj})$ is \emph{presilting} if
$\Hom_{\Kb(A\text{-}\mathrm{proj})}(V,V[i])=0$ for every $i>0$. It is
\emph{silting} if, in addition,
$\operatorname{thick}(V)=\Kb(A\text{-}\mathrm{proj})$. A complex is
\emph{two-term} if it is concentrated in degrees $-1$ and $0$. If
$V^\bullet=(P^{-1}\to P^0)$ is minimal, its \emph{$g$-vector} is
\[
 g(V^\bullet)=[P^0]-[P^{-1}]\in K_0(A\text{-}\mathrm{proj}).
\]
\end{definition}

We write $2\text{-}\mathrm{silt}\,A$ for the poset of basic two-term
silting complexes. We also use the Iyama--J\o rgensen--Yang correspondence
between basic support $\tau$-tilting modules and basic two-term silting
complexes \cite[Theorem~0.2]{IyamaJorgensenYang}. Under this correspondence,
the orders agree \cite[Theorem~2.7(b) and Corollary~2.8]{DIJ}, and
irreducible mutations are the Hasse covers \cite[Corollary~2.34]{AIR}.

\begin{definition}[{\cite[Section~3.1]{BjornerBrenti}}]
\label{def:right-weak-order}
Let $\Sigma_s$ be the symmetric group, let $s_i=(i,i+1)$, and let $\ell$
be the Coxeter length. The \emph{right weak order}
$\Weak(\Sigma_s)$ is determined by the covers
\[
 w\lessdot ws_i\quad\Longleftrightarrow\quad
 \ell(ws_i)=\ell(w)+1.
\]
\end{definition}

The left and right weak orders are isomorphic via $w\mapsto w^{-1}$. We next
recall the approximation terminology.

\begin{definition}[{\cite[Definition~3.1]{HuXi}}]
\label{def:add-split}
Let $N$ be a module. A morphism $u:X\to N'$ with $N'\in\add N$ is a
\emph{left $\add(N)$-approximation} if every morphism from $X$ to an object
of $\add N$ factors through $u$. It is \emph{minimal} if every
$a\in\End(N')$ satisfying $au=u$ is invertible. Right approximations and
their minimality are defined dually. A short exact sequence
\[
 0\longrightarrow X\xrightarrow{u}N'\xrightarrow{v}Y\longrightarrow0
\]
is an \emph{$\add(N)$-split sequence} if $u$ is a left
$\add(N)$-approximation and $v$ is a right $\add(N)$-approximation.
\end{definition}

Since the sequence in Definition~\ref{def:add-split} is exact, $u$ and $v$
are a kernel and a cokernel, respectively. Thus this formulation agrees with
\cite[Definition~3.1]{HuXi} in the module category.

\subsection{Reduction tools}

We record two reductions in the order in which they are applied. The first
uses a faithful projective-injective summand to identify the resulting
tilting slice with the support $\tau$-tilting poset of an idempotent quotient.
The precise statement is as follows.

\begin{proposition}
\label{prop:faithful-slice}
Let $e\in A$ be an idempotent. Assume that $Q=Ae$ is basic, faithful,
projective, and injective. Then there is a poset isomorphism
$$
 \tiltA A
 =\{T\in\stt A\mid Q\in\add T\}
 \simeq\stt(A/AeA).
$$
\end{proposition}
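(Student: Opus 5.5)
The plan has two steps. First we identify the tilting modules with the support $\tau$-tilting modules containing $Q$. Then we apply Jasso's $\tau$-tilting reduction at the $\tau$-rigid module $Q$.

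\emph{Step 1: the equality.} By \cite[Proposition~2.2(b)]{AIR}, as recalled above,
\[
\tiltA A=\{T\in\stt A\mid \ann_AT=0\}.
\]
We show that a support $\tau$-tilting module $T$ is faithful if and only if $Q\in\add T$.

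Suppose first that $T$ is faithful. Then $A$ embeds in $T^m$ for some $m$. Since $Q=Ae$ is a summand of $A$, it also embeds in $T^m$. Because $Q$ is injective, this embedding splits, so $Q$ is a summand of $T^m$. By Krull--Schmidt, and since $Q$ is basic, every indecomposable summand of $Q$ lies in $\add T$, that is, $Q\in\add T$.

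Conversely, suppose $Q\in\add T$. Then $\ann_AT\subseteq\ann_AQ=0$, because $Q$ is faithful, so $T$ is faithful.

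\emph{Step 2: the isomorphism.} The module $Q$ is projective, so $\tau Q=0$ and $Q$ is $\tau$-rigid. Jasso's reduction \cite[Theorem~3.15 and Corollary~3.16]{Jasso} gives an order-preserving bijection from the interval of support $\tau$-tilting modules having $Q$ as a summand onto $\stt C$. Here $C=\End_A(B_Q)/[Q]$ for the Bongartz completion $B_Q$ of $Q$. The remaining task is to identify $C$ with $A/AeA$.

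Since $Q$ is projective, $\Ext^1_A(Q,-)=0$, so ${}^\perp(\tau Q)=A\text{-}\mathrm{mod}$. The Bongartz completion is therefore $B_Q=A$, with basic part $Ae\oplus A(1-e)$ up to multiplicities. This gives
\[
\End_A(A)^{\op}\simeq A,
\]
and factoring out the ideal of maps factoring through $\add Ae$ yields $A/AeA$. Equivalently, in Jasso's description the reduced category is $Q^\perp\cap{}^\perp(\tau Q)=\{M\mid \Hom_A(Ae,M)=0\}=\{M\mid eM=0\}$, which equals $A/AeA\text{-}\mathrm{mod}$. Concretely, the bijection sends $T=Q\oplus T'$ to $T'/\mathrm{tr}_Q(T')$, or to the corresponding reduction $T'\otimes A/AeA$. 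Its inverse sends $M\in\stt(A/AeA)$ to the Bongartz-type completion of $M$ with respect to $Q$.

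We must check that the order is preserved in both directions. Jasso shows this for the interval $[\Fac Q,\ {}^\perp\tau Q]$. Here that interval is exactly the set of support $\tau$-tilting modules $T$ with $Q\in\add T$. Indeed, $\Fac T\supseteq\Fac Q$ holds if and only if $Q\in\add T$, because $Q$ is Ext-projective in $\Fac Q$ and projective in $A\text{-}\mathrm{mod}$.

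\emph{Main obstacle.} I expect the main difficulty to be field generality. Jasso, and \cite{AIR}, assume an algebraically closed field or at least work with $K$-finite algebras. We will need to remark that the arguments only use Krull--Schmidt, Auslander--Reiten duality and Bongartz completion, all of which hold for finite-dimensional algebras over an arbitrary field, as the paper already noted for \cite[Proposition~2.2(b)]{AIR}. A secondary point is the careful matching of left modules and opposite algebras in Jasso's formula for $C$, so that we get $A/AeA$ rather than its opposite.
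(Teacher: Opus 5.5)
Your proposal is correct and follows essentially the paper's proof. You show that faithfulness is equivalent to $Q\in\add T$ by splitting off the injective summand $Q$, using the embedding $A\hookrightarrow T^m$ where the paper uses the monomorphism $A\to T_0$ from (T3); you then apply Jasso reduction at $Q$ with $Q^\perp\cap{}^\perp(\tau Q)=Q^\perp=\{M\mid eM=0\}\simeq(A/AeA)\text{-}\mathrm{mod}$, and this description of the reduced category via $Q^\perp$ also settles the left/right and opposite-algebra bookkeeping you flagged for $\End_A(B_Q)/[Q]$.
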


\begin{proof}
Let $T\in\tiltA A$, and choose an exact sequence
$0\to A\to T_0\to T_1\to0$ as in \textup{(T3)}. Since $Q$ is an injective
direct summand of $A$, the induced monomorphism $Q\to T_0$ splits. Hence
$Q\in\add T$.

Conversely, if $T\in\stt A$ and $Q\in\add T$, then
${\ann}_AT\subseteq{\ann}_AQ=0$. Thus $T$ is faithful, and
\cite[Proposition~2.2(b)]{AIR} implies that $T$ is tilting.
Therefore the two sets coincide.
Since $Q=Ae$ is projective, $\tau Q=0$. Moreover,
$\Hom_A(Ae,X)\simeq eX$, so
\[
 J(Q)=Q^\perp\cap{}^\perp(\tau Q)=Q^\perp
 \simeq(A/AeA)\text{-}\mathrm{mod}.
\]
Hence Jasso reduction
\cite[Theorem~3.15 and Corollary~3.16]{Jasso} induces the asserted poset
isomorphism.
\end{proof}

The reduction mechanism in Proposition~\ref{prop:faithful-slice} is standard. By
\cite[Proposition~2.2(b)]{AIR}, tilting modules are precisely the
faithful support $\tau$-tilting modules, while
\cite[Theorem~3.15 and Corollary~3.16]{Jasso} gives an order-preserving
reduction for support $\tau$-tilting modules containing a fixed basic
$\tau$-rigid summand. For the projective summand $Q=Ae$, its Bongartz
completion is $A$, and Jasso's reduction algebra is $A/AeA$.
Proposition~\ref{prop:faithful-slice} therefore follows by combining Jasso
reduction with the AIR characterization of tilting modules as the faithful
slice. Iyama--Zhang
\cite[Theorem~4.5]{IyamaZhangAG} obtain the analogous bijection for a
1-Gorenstein algebra when $Ae$ is an additive generator of all
projective-injective modules. Here no Gorenstein assumption and no such
generator condition are imposed; instead, the chosen summand $Ae$ is assumed
directly to be faithful and injective.

The second passes to a quotient by an ideal contained in the ideal generated
by central radical elements and preserves the labeled mutation structure.

\begin{proposition}
\label{prop:central-reduction-labels}
Let $I\subseteq\bigl(Z(A)\cap\rad A\bigr)A$ be an ideal and put
$\overline A=A/I$. Then reduction induces poset isomorphisms
\[
 2\text{-}\mathrm{silt}\,A\simeq2\text{-}\mathrm{silt}\,\overline A,
  \ \
 \stt A\simeq\stt\overline A.
\]
After the indecomposable projectives are labeled compatibly, reduction
preserves $g$-vectors, indecomposable summands, mutation labels, and mutation
directions.
\end{proposition}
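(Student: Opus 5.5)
This is the standard result of Eisele--Janssens--Raedschelders \cite[Theorem~11]{EJR}: reduction modulo an ideal generated by central radical elements preserves two-term silting. The plan is to reproduce its proof in the form needed here and then track the labels.

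\textbf{Step 1: reduce two-term silting complexes to $\overline A$.} For $V\in\Kb(A\text{-}\mathrm{proj})$ two-term, put $\overline V=\overline A\otimes_AV$. This is again two-term, with $\overline{P}$ the reduction of each term. Since $I\subseteq\rad A$, Nakayama's lemma gives two facts.
\begin{enumerate}
\item $P\mapsto\overline P$ is a bijection on isoclasses of indecomposable projectives, so we can label them compatibly.
\item The induced map $K_0(A\text{-}\mathrm{proj})\to K_0(\overline A\text{-}\mathrm{proj})$ is the identity in these labels.
\end{enumerate}
Moreover, a minimal complex stays minimal, because its differential has image in the radical and this survives reduction. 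Hence $g(\overline V)=g(V)$.

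\textbf{Step 2: presilting is preserved and reflected.} The key point is the following.
\begin{itemize}
\item A morphism $\bar f:\overline V\to\overline W[1]$ is a map $\overline{P_V^{-1}}\to\overline{P_W^0}$. It lifts to some $f:P_V^{-1}\to P_W^0$ because the terms are projective.
\item If $V\oplus W$ is presilting, then $f$ is null-homotopic, and reducing the homotopy shows $\bar f$ is null-homotopic. So $\overline V\oplus\overline W$ is presilting.
\item Conversely, suppose $\overline{V}$ is presilting. Write $I=\sum z_kA$ with $z_k\in Z(A)\cap\rad A$. Every $f$ becomes homotopic to $0$ modulo $I$, so $f=d_Wh+h'd_V+\sum z_kf_k$ with $f_k\in\Hom(V,V[1])$.
\item Here centrality is essential: multiplication by $z_k$ is a chain endomorphism of $V$. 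So $\sum z_kf_k$ lies in $(\rad Z)\cdot\Hom_{\Kb}(V,V[1])$.
\item Therefore $H=\Hom_{\Kb}(V,V[1])$, a finitely generated $Z(A)$-module, satisfies $H=(Z\cap\rad A)H$. Since $Z\cap\rad A\subseteq\rad Z$, Nakayama gives $H=0$.
\end{itemize}

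\textbf{Step 3: silting and bijectivity.} A presilting two-term complex is silting exactly when it has $|A|$ nonisomorphic indecomposable summands. Indecomposability is preserved by reduction, since a lifting argument identifies $\End$ of $\overline V$ with $\End(V)/I'$ for a nilpotent-mod-radical ideal $I'$, so idempotents lift. Hence reduction maps $2\text{-}\mathrm{silt}\,A$ to $2\text{-}\mathrm{silt}\,\overline A$.

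The map is injective because two-term silting complexes are determined by their $g$-vectors \cite{AIR}, and $g$-vectors are preserved. It is surjective because any $\overline A$-silting complex lifts by lifting its differential, and Step 2 shows the lift is presilting with the right number of summands.

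\textbf{Step 4: order and labels.} The order $V\geq W$ iff $\Hom(V,W[1])=0$ is preserved and reflected by the argument of Step 2, applied to $\Hom(V,W[1])$ as a $Z$-module. Thus reduction is a poset isomorphism. Transporting along Iyama--J\o rgensen--Yang and \cite{DIJ} gives the isomorphism of $\stt$ posets.

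Mutation at a summand corresponds to replacing one indecomposable summand, and Hasse covers are irreducible mutations. Since summands and $g$-vectors correspond, mutation labels (the index of the changed summand) and directions (covers up or down) are preserved.

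The main obstacle is the reflection part of Step 2. It needs care in writing morphisms modulo homotopy as a $Z(A)$-module and in using centrality to move the $z_k$ outside; for this I would follow \cite[Theorem~11]{EJR} closely.
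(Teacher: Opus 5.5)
Your route is the paper's route. The paper cites the arbitrary-field form of the Eisele--Janssens--Raedschelders theorem, \cite[Proposition~2.1]{HLZtau}, for the two poset isomorphisms and the preservation of $g$-vectors. It then transfers labels as you do, using the compatible labeling of indecomposable projectives and uniqueness of two-term presilting $g$-vectors \cite[Theorem~6.5(a) and Corollary~6.7]{DIJ}. Cite the latter rather than \cite{AIR}, since it is stated over an arbitrary field. Your Steps~1, 2 and~4 correctly reconstruct the cited theorem, with one correction: the hypothesis gives only $I\subseteq J:=(Z(A)\cap\rad A)A$, not $I=\sum_kz_kA$. This is harmless in Step~2. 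The error term $f-dh-h'd$ lies in $\Hom_A(P_V^{-1},IP_W^0)\subseteq\Hom_A(P_V^{-1},JP_W^0)$, and $JP_W^0=\sum_kz_kP_W^0$ for an $R$-basis $z_1,\ldots,z_m$ of $Z(A)\cap\rad A$.

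The step that does not work as written is preservation of indecomposability in Step~3. You need it for the summand count in the surjectivity argument and for the claim that indecomposable summands correspond.

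\emph{Why the lifting argument fails.} Lifting the two components of a chain endomorphism $\bar\phi$ of $\overline V$ gives maps $\phi^{-1},\phi^0$ whose defect $\epsilon=d\phi^{-1}-\phi^0d$ lies only in $\Hom_A(P^{-1},IP^0)$. For a general two-term complex there is no reason for $\End(V)\to\End(\overline V)$ to be surjective.

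\emph{The fix.} The missing input is the presilting hypothesis together with your centrality trick from Step~2. Write $\epsilon=\sum_kz_k\epsilon_k$. Each $\epsilon_k$ is null-homotopic because $\Hom(V,V[1])=0$, say $\epsilon_k=dh_k+h_k'd$. Then $\phi^{-1}-\sum_kz_kh_k$ and $\phi^0+\sum_kz_kh_k'$ form a chain map. The correction terms lie in $JP$, not necessarily in $IP$, so this proves surjectivity of $\End(V)\to\End(V/JV)$, not onto $\End(V/IV)$.

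\emph{Concluding.} A nonzero quotient of the local ring $\End(V)$ is local, so $V/JV$ is indecomposable; your nilpotency remark is not needed. Then $\overline V=V/IV$ is indecomposable, since a nontrivial decomposition of it would reduce to one of $V/JV$: nonzero minimal complexes stay nonzero and minimal under reduction.

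\emph{Alternative.} You can avoid endomorphism rings altogether. If $\overline V\simeq X\oplus Y$ nontrivially, lift $X$ and $Y$ by lifting their terms and differentials. By Step~2 the lift $\tilde X\oplus\tilde Y$ is presilting, and it has the same $g$-vector as $V$. Hence it is isomorphic to $V$ by \cite[Theorem~6.5(a)]{DIJ}, a contradiction.
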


\begin{proof}
By the arbitrary-field reduction theorem
\cite[Proposition~2.1]{HLZtau}, which extends
\cite[Theorem~11]{EJR}, reduction induces the stated poset isomorphisms and
preserves direct sums and $g$-vectors. Choose primitive idempotents
$e_1,\ldots,e_t$ such
that $Ae_i$ represent the indecomposable projectives. Since
$I\subseteq\rad A$, their images $\overline A\overline e_i$ are precisely
the indecomposable projectives of $\overline A$. Therefore
\[
 g(V^\bullet)=\sum_i g_i[Ae_i]
 \quad\Longrightarrow\quad
 g(\overline A\otimes_AV^\bullet)
   =\sum_i g_i[\overline A\overline e_i].
\]
Thus reduction preserves the projective coordinates of every summand.
The arbitrary-field uniqueness of two-term presilting $g$-vectors
\cite[Theorem~6.5(a) and Corollary~6.7]{DIJ} then shows that corresponding
indecomposable summands have the same labels. Consequently, corresponding
irreducible mutations replace summands with the same label before and after
reduction. Finally, the poset isomorphism preserves the mutation direction.
\end{proof}

\subsection{Primary blocks and structural facts}

Modules over a finite product decompose componentwise. Since Morita
equivalences preserve projective dimension, extensions, additive closure,
and the generation order, it follows that
\[
 \tiltA\Bigl(\prod_{j=1}^q A_j\Bigr)
 \simeq\prod_{j=1}^q\tiltA A_j.
\]

The primary product decomposition and the Morita reduction obtained by
passing to an additive generator are established over an arbitrary field in
\cite[equations~(4.1) and~(4.2)]{HLZtau}. Thus a basic algebra in the Morita
class of $S_n(c,R)$ is
\begin{equation}\label{eq:basic-centralizer}
 S_n(c,R)_{\mathrm b}\simeq
 \prod_{f\in\Irr(c)}\Lambda_f(P_c(f)),
  \ \
 \Lambda_f(P)=\End_{R[x]}\left(
   \bigoplus_{a\in P}R[x]/(f^a)\right).
\end{equation}
Passing to an additive generator eliminates the multiplicities of repeated
elementary divisors. Consequently, these multiplicities cannot be recovered
from Morita-invariant tilting data.

The tilting poset does not detect the primary factors with
$\kappa_c(f)=1$, whose tilting posets are singletons. We use the following
formula for the center to recover their total number.

\begin{proposition}[{\cite[Proposition~4.3 and
Corollary~4.4]{HLZtau}}]
\label{prop:center-matrix}
Let $\mu_c$ be the minimal polynomial of $c$. Then
\[
 Z(S_n(c,R))=R[c]\simeq R[x]/(\mu_c)
 \simeq\prod_{f\in\Irr(c)}U_c(f),
  \ \ U_c(f)=R[x]/(f^{\max P_c(f)}).
\]
The algebras $U_c(f)$ are precisely the local factors of the center.
\end{proposition}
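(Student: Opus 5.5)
We will prove the three isomorphisms in turn: first identify the center with $R[c]$ by a double-centralizer argument, then identify $R[c]$ with $R[x]/(\mu_c)$, and finally split off the local factors by the Chinese remainder theorem. Throughout, $V=R^n$ is the $R[x]$-module in which $x$ acts as $c$, so that $S:=S_n(c,R)\simeq\End_{R[x]}(V)$ and $R[c]\subseteq S$.

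\emph{Step 1: $Z(S)=R[c]$.} Every polynomial in $c$ commutes with every element of $S$, so $R[c]\subseteq Z(S)$. For the reverse inclusion we use the rational canonical form, which is valid over an arbitrary field. It gives a decomposition $V=\bigoplus_{j=1}^t V_j$ with $V_j\simeq R[x]/(d_j)$ cyclic, generated by $v_j$, where $d_t\mid\cdots\mid d_1=\mu_c$. Let $z\in Z(S)$. Since $z$ commutes with the projection idempotents $e_j\in S$, it preserves each $V_j$. In particular $z(v_1)=p(c)v_1$ for some polynomial $p$. Because $d_j\mid d_1$, there is a surjective $R[x]$-homomorphism $\pi_j:V_1\to V_j$ sending $v_1$ to $v_j$. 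Extending $\pi_j$ by zero on the other summands gives an element of $S$. Commuting $z$ with this element yields
\[
 z(v_j)=z\pi_j(v_1)=\pi_j z(v_1)=\pi_j\bigl(p(c)v_1\bigr)=p(c)v_j .
\]
Thus $z$ and $p(c)$ are $R[x]$-linear maps that agree on a set of $R[x]$-generators of $V$. Hence $z=p(c)\in R[c]$.

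\emph{Step 2: $R[c]\simeq R[x]/(\mu_c)$.} The evaluation map $R[x]\to R[c]$, $x\mapsto c$, is surjective. By the definition of the minimal polynomial, its kernel is $(\mu_c)$.

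\emph{Step 3: the product decomposition.} Factor $\mu_c=\prod_{f\in\Irr(c)}f^{k_f}$. The exponent $k_f$ equals the largest power of $f$ occurring among the elementary divisors of $c$, since $\mu_c=d_1$ is the least common multiple of the elementary divisors. Hence $k_f=\max P_c(f)$. The factors $f^{k_f}$ are pairwise coprime, so the Chinese remainder theorem gives
\[
 R[x]/(\mu_c)\simeq\prod_{f\in\Irr(c)}R[x]/\bigl(f^{\max P_c(f)}\bigr)=\prod_{f\in\Irr(c)}U_c(f).
\]
Each $U_c(f)$ is local, with unique maximal ideal $(f)/(f^{k_f})$, because $f$ is irreducible.

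\emph{Step 4: these are precisely the local factors.} A finite-dimensional commutative algebra decomposes uniquely as a product of local algebras. Indeed, the factors correspond to the primitive idempotents, and in a commutative ring the complete set of primitive orthogonal idempotents is unique. Therefore the $U_c(f)$ are exactly the local factors of the center.

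The only step with real content is Step 1. The point to get right there is that the maps $\pi_j$ exist in $S$, and this is exactly what the divisibility chain $d_j\mid d_1$ of invariant factors provides. The remaining steps are standard commutative algebra.
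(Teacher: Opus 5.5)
Your proof is correct. The paper does not prove this statement itself; it imports it from \cite[Proposition~4.3 and Corollary~4.4]{HLZtau}, so there is no in-paper argument to compare against, and your proposal is a self-contained substitute for that citation.

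The substantive part is your Step~1. It uses only the invariant-factor decomposition of $R^n$ over the principal ideal domain $R[x]$, so it works over an arbitrary field, as the paper's standing conventions require. It also proves slightly more than is stated. Any matrix commuting with all of $S_n(c,R)$ commutes with $c\in S_n(c,R)$, so it already lies in $S_n(c,R)$. Hence your argument shows that the full centralizer of $S_n(c,R)$ in $M_n(R)$ is $R[c]$, which is the classical double-centralizer property of a single matrix.

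Steps~2--4 are standard:
\begin{itemize}
\item $\mu_c=d_1$ is the least common multiple of the elementary divisors, which identifies the exponent of $f$ in $\mu_c$ with $\max P_c(f)$;
\item the Chinese remainder theorem splits $R[x]/(\mu_c)$ into the factors $U_c(f)$;
\item uniqueness of the complete set of primitive idempotents in a commutative Artinian ring makes the local factors canonical.
\end{itemize}
That last point is exactly how the paper uses the proposition in the proof of Theorem~\ref{thm:intro-poset}, where the number of primitive central idempotents recovers $|\Irr(c)|$.

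The citation keeps the paper short and sources all facts about the primary decomposition from one reference. Your route makes the paper independent of \cite{HLZtau} on this point, at the cost of a few lines of linear algebra.
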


For a finite set $P=\{p_1<\cdots<p_s\}$ of positive integers, define its
\emph{gap word} by
\[
 g(P)=(p_1,p_2-p_1,\ldots,p_s-p_{s-1}).
\]

We now recall Li--Xi's definition in their original notation. Let
$\mathcal E_c$ be the set of elementary divisors of $c$, and let
\[
 \mathcal{M}_c:=\{f\in\mathcal E_c\mid f\text{ is maximal under polynomial
 divisibility}\}.
\]
For $f\in \mathcal{M}_c$, set
\[
 P_c(f):=\{i\geq1\mid p\mid f\text{ and }p^i\in\mathcal E_c
 \text{ for some monic irreducible }p\in R[x]\}.
\]
If $T=\{m_1>\cdots>m_s\}$ is a finite set of positive integers, define
\[
 \mathcal{H}_T:=\ms{m_1-m_2,\ldots,m_{s-1}-m_s,m_s}.
\]

\begin{definition}[{\cite[Definition~3.1(2)]{LiXiD}}]
\label{def:D-equivalence}
Matrices $c\in M_n(R)$ and $d\in M_m(R)$ are \emph{$D$-equivalent} if there
is a bijection $\pi:\mathcal{M}_c\to \mathcal{M}_d$ such that, for every $f\in \mathcal{M}_c$,
\[
 R[x]/(f)\simeq R[x]/(\pi(f))
~\text{as $R$-algebras},
 \ \
 \mathcal{H}_{P_c(f)}=\mathcal{H}_{P_d(\pi(f))}.
\]
\end{definition}

To relate this definition to our primary notation, let $f\in\Irr(c)$ and put
\[
 r_c(f):=\max P_c(f),
  \ \
 \mathcal O_c(f):=R[x]/(f^{r_c(f)}).
\]
Then one has
 $\mathcal{M}_c=\{f^{r_c(f)}\mid f\in\Irr(c)\},
  \ \
 P_c(f^{r_c(f)})=P_c(f),
$
and $\mathcal{H}_{P_c(f^{r_c(f)})}$ is the multiset of the entries of
$g(P_c(f))$. Thus Definition~\ref{def:D-equivalence} records, for each
primary factor, the coefficient algebra $\mathcal O_c(f)$ and the unordered
multiset of successive gaps. Consequently, Li--Xi's classification now reads as follows.

\begin{theorem}[{\cite[Theorem~1.1]{LiXiD}}]
\label{thm:LiXi-criterion}
Let $c\in M_n(R)$ and $d\in M_m(R)$. Then $S_n(c,R)$ and $S_m(d,R)$ are
derived equivalent if and only if $c$ and $d$ are $D$-equivalent.
\end{theorem}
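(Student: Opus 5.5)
This statement is Li--Xi's theorem \cite[Theorem~1.1]{LiXiD}, rewritten in the primary notation of this paper. So I would not reprove it. I would check that the dictionary set up just before the statement is faithful, and then quote Li--Xi. Concretely, I would verify that Definition~\ref{def:D-equivalence}, read in Li--Xi's notation $\mathcal M_c$, $P_c(-)$, $\mathcal H_T$, is literally the $D$-equivalence of \cite[Definition~3.1(2)]{LiXiD}. Once this is done, the equivalence ``derived equivalent $\Leftrightarrow$ $D$-equivalent'' is exactly their main theorem over an arbitrary field $R$.

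The translation would go in the following order.
\begin{enumerate}[label={\textup{(\roman*)}}]
\item The elementary divisors of $c$ are the powers $f^a$ with $f\in\Irr(c)$ and $a\in P_c(f)$. Under divisibility, $f^a\mid h^b$ with $f,h$ irreducible forces $f=h$ and $a\le b$. Hence the maximal elementary divisors are exactly the $f^{r_c(f)}$, one for each $f\in\Irr(c)$. This gives $\mathcal M_c=\{f^{r_c(f)}\mid f\in\Irr(c)\}$ together with the bijection $\Irr(c)\to\mathcal M_c$, $f\mapsto f^{r_c(f)}$.
\item For the maximal element $f^{r_c(f)}$, the only monic irreducible $p$ dividing it is $f$. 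Therefore Li--Xi's set $P_c(f^{r_c(f)})$ coincides with the set of exponents $P_c(f)$.
\item Write $P_c(f)=\{p_1<\cdots<p_s\}$ and list it decreasingly as $m_1>\cdots>m_s$. Then $\mathcal H_{P_c(f)}=\ms{m_1-m_2,\ldots,m_{s-1}-m_s,m_s}$. This is the multiset of entries of $g(P_c(f))=(p_1,p_2-p_1,\ldots,p_s-p_{s-1})$, read in reverse order.
\end{enumerate}

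Combining these, a bijection $\pi:\mathcal M_c\to\mathcal M_d$ as in Definition~\ref{def:D-equivalence} is the same as a bijection $\Irr(c)\to\Irr(d)$ satisfying two conditions for each matched pair $f\mapsto f'$. First, the coefficient algebras must agree: $\mathcal O_c(f)=R[x]/(f^{r_c(f)})\simeq R[x]/(f'^{\,r_d(f')})=\mathcal O_d(f')$, since the polynomial appearing in Li--Xi's condition is the maximal divisor itself. Second, the unordered gap multisets must agree. This is the reformulation stated before the theorem. Li--Xi's \cite[Theorem~1.1]{LiXiD} then gives the claim verbatim.

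The only point I expect to need care is making sure the hypotheses match. Li--Xi must be working over an arbitrary field, and their ``derived equivalent'' must mean $R$-linear derived equivalence of finite-dimensional algebras, as fixed in our conventions. I would check this against \cite{LiXiD} and, if necessary, add a remark that derived equivalence between such algebras can always be taken $R$-linear via Rickard's tilting complexes. Apart from this, the argument is purely bookkeeping.
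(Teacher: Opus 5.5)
Your proposal is correct and matches the paper, which also just quotes \cite[Theorem~1.1]{LiXiD} after the same dictionary: $\mathcal M_c=\{f^{r_c(f)}\mid f\in\Irr(c)\}$, $P_c(f^{r_c(f)})=P_c(f)$, and $\mathcal H_{P_c(f^{r_c(f)})}$ is the multiset of entries of $g(P_c(f))$, with all derived equivalences taken $R$-linear by convention. Do not add the fallback remark you mention, because it is false: over $R=\mathbb Q(t)$ the companion matrices of $x^2-t$ and $x^2-t-1$ have centralizers $R[x]/(x^2-t)\simeq\mathbb Q(\sqrt t)$ and $R[x]/(x^2-t-1)\simeq\mathbb Q(\sqrt{t+1})$, which are isomorphic as rings (both are rational function fields over $\mathbb Q$) but not as $R$-algebras, so they are derived equivalent as rings but not $R$-linearly, and the theorem must be read $R$-linearly on both sides, as the paper's conventions and Definition~\ref{def:D-equivalence} stipulate.
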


The coefficient algebra, rather than its defining polynomial, is the
invariant in Definition~\ref{def:D-equivalence}. The following elementary
lemma shows that an isomorphism of coefficient algebras carries radical
powers to the corresponding radical powers and induces an
idempotent-preserving isomorphism of the associated endomorphism algebras.

\begin{lemma}
\label{lem:coefficient-transport}
Let $\varphi:\mathcal O\to\mathcal O'$ be an isomorphism of
finite-dimensional commutative local principal $R$-algebras. Then
$\varphi(\rad^a\mathcal O)=\rad^a\mathcal O'$ for every $a\geq0$.
Consequently, if $0<q_1<\cdots<q_s$, then transport of scalars along
$\varphi$ induces an idempotent-preserving $R$-algebra isomorphism
\[
 \End_{\mathcal O}\left(
   \bigoplus_{i=1}^s\mathcal O/\rad^{q_i}\mathcal O\right)
 \simeq
 \End_{\mathcal O'}\left(
   \bigoplus_{i=1}^s\mathcal O'/\rad^{q_i}\mathcal O'\right).
\]
\end{lemma}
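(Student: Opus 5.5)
The first claim is a purely ring-theoretic statement about finite-dimensional commutative local principal $R$-algebras. In such an algebra $\mathcal O$, the radical is the unique maximal ideal $\mathfrak m$, and the radical powers are its ordinary powers: $\rad^a\mathcal O=\mathfrak m^a$. The plan is to use that an $R$-algebra isomorphism $\varphi$ carries the unique maximal ideal to the unique maximal ideal. This holds because $\varphi$ maps nonunits to nonunits, or equivalently because the Jacobson radical is defined intrinsically. Hence $\varphi(\mathfrak m)=\mathfrak m'$. Since $\varphi$ is multiplicative and additive, it carries products of ideals to products of ideals, so $\varphi(\mathfrak m^a)=\varphi(\mathfrak m)^a=\mathfrak m'^a$ for every $a\geq0$. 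The case $a=0$ is the equality $\varphi(\mathcal O)=\mathcal O'$. Nothing about principality is needed for this step; principality only matters in that the cyclic modules $\mathcal O/\rad^{q_i}\mathcal O$ are the natural objects in the statement.

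For the second claim, I would use restriction of scalars along $\varphi^{-1}$. This is an isomorphism of categories $\Phi:\mathcal O'\text{-}\mathrm{mod}\to\mathcal O\text{-}\mathrm{mod}$ that is the identity on underlying $R$-vector spaces and on morphisms, so it is $R$-linear. The key step is to check that
\[
 \Phi\bigl(\mathcal O'/\rad^{q}\mathcal O'\bigr)\simeq\mathcal O/\rad^{q}\mathcal O
\]
for each $q\geq0$. The map $\varphi$ induces a bijection $\mathcal O\to\mathcal O'$, which is $\mathcal O$-linear when $\mathcal O'$ is viewed as an $\mathcal O$-module via $\varphi$. By the first part, it descends to an $\mathcal O$-linear isomorphism $\overline\varphi_q:\mathcal O/\rad^q\mathcal O\to\Phi(\mathcal O'/\rad^q\mathcal O')$.

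Now set $M=\bigoplus_i\mathcal O/\rad^{q_i}\mathcal O$ and $M'=\bigoplus_i\mathcal O'/\rad^{q_i}\mathcal O'$. The diagonal map $\Theta=\bigoplus_i\overline\varphi_{q_i}$ is then an $\mathcal O$-isomorphism $M\to\Phi M'$. Combining this with the fact that $\Phi$ is fully faithful on $\mathcal O'$-modules, we obtain an $R$-algebra isomorphism
\[
 \End_{\mathcal O'}(M')=\End_{\mathcal O}(\Phi M')\xrightarrow{\ \sim\ }\End_{\mathcal O}(M),\qquad a\mapsto\Theta^{-1}a\Theta .
\]
The equality $\End_{\mathcal O'}(M')=\End_{\mathcal O}(\Phi M')$ holds because an $R$-linear map commutes with the $\mathcal O'$-action if and only if it commutes with the $\varphi$-twisted $\mathcal O$-action, as $\varphi$ is surjective.

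Finally, the isomorphism is idempotent-preserving. Because $\Theta$ is diagonal with respect to the indexed decompositions, conjugation by $\Theta$ sends the projection onto the $i$th summand of $M'$ to the projection onto the $i$th summand of $M$. These projections are the distinguished primitive idempotents labeled by $i$ (equivalently by $q_i$).

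The only point requiring any care is the first step, $\varphi(\rad\mathcal O)=\rad\mathcal O'$, together with keeping track of the direction of twisting in the second step. Both are routine, so I expect no real obstacle.
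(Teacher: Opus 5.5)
Your proposal is correct and follows essentially the same route as the paper: $\varphi(\rad\mathcal O)=\rad\mathcal O'$ gives the radical powers, and conjugation by the block-diagonal sum of the induced quotient maps $\bar\varphi_{q_i}$ gives the $R$-algebra isomorphism matching the summand projections. Your restriction-of-scalars formulation is a repackaging of the paper's semilinearity identity $\Phi(ax)=\varphi(a)\Phi(x)$.
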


\begin{proof}
Since $\varphi(\rad\mathcal O)=\rad\mathcal O'$, the first assertion
follows by taking powers. Let
$\bar\varphi_a:\mathcal O/\rad^a\mathcal O\to
\mathcal O'/\rad^a\mathcal O'$ be the induced map and put
$\Phi=\bigoplus_i\bar\varphi_{q_i}$. Then $\Phi(ax)=\varphi(a)\Phi(x)$.
Hence conjugation by $\Phi$ induces the asserted $R$-algebra isomorphism and
sends each summand projection to the corresponding summand projection.
\end{proof}

A \emph{positive gap word} is an element
$g=(g_1,\ldots,g_s)\in\mathbb Z_{>0}^s$. If $\mathcal O$ is a
finite-dimensional commutative local principal $R$-algebra of Loewy length
$\sum_i g_i$, set
\[
 q_i(g)=\sum_{j=1}^i g_j,
  \ \
 A_{\mathcal O,g}=\End_{\mathcal O}\left(
   \bigoplus_{i=1}^s\mathcal O/\rad^{q_i(g)}\mathcal O\right).
\]
We write $A_g$ when $\mathcal O$ is fixed. By
Lemma~\ref{lem:coefficient-transport}, $A_{\mathcal O,g}$ depends only on
the $R$-algebra isomorphism class of $\mathcal O$ and on the ordered word
$g$.

Next we record the local structure.
Let $\cO$ be a finite-dimensional commutative local principal $R$-algebra
of Loewy length $\ell$. Write $\rad\cO=(\pi)$ and
$k=\cO/(\pi)$. For
$P=\{p_1<\cdots<p_s\}\subseteq\{1,\ldots,\ell\}$, put
\[
 \begin{aligned}
 M(i)&=\cO/(\pi^{p_i}),
 &M_P&=\bigoplus_{i=1}^sM(i),&
 \Lambda_{\cO}(P)&=\End_{\cO}(M_P).
 \end{aligned}
\]
Since every summand of $M_P$ is annihilated by $\pi^{p_s}$, replacing
$\cO$ by the local principal $R$-algebra $\cO/(\pi^{p_s})$ does not change
$\Lambda_{\cO}(P)$. Thus we may assume $p_s=\ell$ and $M(s)=\cO$.
Multiplication by $\pi$ on every summand defines a central nilpotent
endomorphism $z_\pi$. Since a central nilpotent element belongs to the
Jacobson radical,
$z_\pi\in Z(\Lambda_{\cO}(P))\cap\rad\Lambda_{\cO}(P)$.

Let $Q_s$ be the doubled line
\[
 1\underset{\beta_1}{\overset{\alpha_1}{\rightleftarrows}}2
 \rightleftarrows\cdots\rightleftarrows
 s-1\underset{\beta_{s-1}}{\overset{\alpha_{s-1}}{\rightleftarrows}}s,
\]
and set $
 B_s(k):=kQ_s/
 \langle\beta_i\alpha_i,\alpha_i\beta_i\mid1\leq i<s\rangle.$

The following result shows that, after central reduction, the quotient depends
on $P$ only through $s=|P|$ and on $\cO$ only through its residue field.

\begin{proposition}[{\cite[Theorem~3.1]{HLZtau}}]
\label{prop:uniform-quotient}
There is an idempotent-preserving $k$-algebra isomorphism
\[
 \Lambda_{\cO}(P)/(z_\pi)\simeq B_s(k).
\]
In particular, this quotient depends on $P$ only through $s=|P|$.
\end{proposition}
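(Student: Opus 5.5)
The plan is to write $\Lambda:=\Lambda_{\cO}(P)$ explicitly by means of homomorphisms between uniserial $\cO$-modules. Next, I would define a surjection $kQ_s\to\Lambda/(z_\pi)$ that kills the relations of $B_s(k)$, and then compare dimensions. Since $z_\pi$ is multiplication by $\pi$, we have $(z_\pi)=\pi\Lambda$. The action of $\cO$ on $\Lambda$ is central, so $\Lambda/(z_\pi)$ is an algebra over $\cO/(\pi)=k$. The idempotents $e_i$ of $\Lambda$ are the projections onto the summands $M(i)$, and their images are the idempotents to be matched with the vertices of $Q_s$.

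\emph{Step 1 (Hom spaces).} For $a,b\geq1$, the space $\Hom_{\cO}(\cO/(\pi^a),\cO/(\pi^b))$ is a cyclic $\cO$-module isomorphic to $\cO/(\pi^{\min(a,b)})$. If $a\leq b$, it is generated by multiplication by $\pi^{b-a}$, which is the embedding. If $a\geq b$, it is generated by the canonical projection. Consequently $e_j(\Lambda/\pi\Lambda)e_i$ is one-dimensional over $k$ for all $i,j$, and it is spanned by the image of the generator $\gamma_{ji}$. Hence $\dim_k\Lambda/(z_\pi)=s^2$.

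\emph{Step 2 (the map).} Let $\alpha_i\mapsto\gamma_{i+1,i}$, which is the embedding $M(i)\to M(i+1)$ given by multiplication by $\pi^{p_{i+1}-p_i}$. Let $\beta_i\mapsto\gamma_{i,i+1}$, which is the projection $M(i+1)\to M(i)$. This defines an algebra map $kQ_s\to\Lambda/(z_\pi)$ that preserves the vertex idempotents. The relations hold for the following reasons. The composite $\beta_i\alpha_i$ is multiplication by $\pi^{p_{i+1}-p_i}$ on $M(i)$. The composite $\alpha_i\beta_i$ is multiplication by $\pi^{p_{i+1}-p_i}$ on $M(i+1)$. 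Since the gaps are positive, both composites lie in $\pi\Lambda$. Therefore the map factors through $B_s(k)$.

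\emph{Step 3 (surjectivity and dimension count).} For $i<j$, the monotone path $\alpha_{j-1}\cdots\alpha_i$ maps to a composite of embeddings, namely multiplication by $\pi^{p_j-p_i}$. This is exactly the generator $\gamma_{ji}$. Similarly, a composite of projections is the projection $\gamma_{ji}$ for $i>j$. Hence the map is surjective. In $B_s(k)$, the only nonzero paths from $i$ to $j$ are the monotone ones, so $e_jB_s(k)e_i$ is spanned by a single path and $\dim_kB_s(k)\leq s^2$. Combined with Step 1, a surjection onto a space of dimension $s^2$ forces equality, so the map is an isomorphism. I expect the main point needing care to be Step 1. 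One must verify that each generator $\gamma_{ji}$ is not in $\pi\cdot\Hom$, which holds because $\Hom$ is free of rank one over $\cO/(\pi^{\min})$ with $\min\geq1$. One must also verify that composites of generators are again generators, not multiples by powers of $\pi$. This is where monotonicity matters: an embedding followed by a projection in the opposite direction picks up a factor of $\pi$, but monotone composites do not. Since the final presentation involves only $s$ and $k$, the last assertion of the statement follows.
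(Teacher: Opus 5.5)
Your proof is correct and complete. The paper itself does not prove this proposition; it quotes it from \cite[Theorem~3.1]{HLZtau}. So there is no internal proof to compare against, and your argument serves as a self-contained replacement for the citation.

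All the steps check out:
\begin{enumerate}
\item Since $z_\pi$ is central, $(z_\pi)=\pi\Lambda$ splits along the corners.
\item Each corner $e_j\Lambda e_i=\Hom_{\cO}(M(i),M(j))$ is cyclic over $\cO/(\pi^{\min(p_i,p_j)})$ with $\min(p_i,p_j)\geq1$. Hence it is one-dimensional over $k$ after reduction.
\item The monotone composites of embeddings and projections are exactly the cyclic generators, which gives surjectivity.
\item The two turn-back composites are multiplication by $\pi^{p_{i+1}-p_i}$ with positive exponent, so the relations of $B_s(k)$ hold.
\item Non-monotone paths vanish in $B_s(k)$, so $\dim_kB_s(k)\leq s^2$ and the surjection is bijective.
\end{enumerate}

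Two features of your route are worth keeping. First, it uses only the $k$-structure on $\Lambda/\pi\Lambda$ induced by the central action of $\cO$, and never a coefficient field inside $\cO$. This is the right level of generality for an arbitrary base field $R$: for inseparable $f$, the algebra $R[x]/(f^a)$ need not contain an $R$-subalgebra mapping isomorphically onto $k$. Second, it shows directly that the vertex idempotents go to the summand projections. This is exactly the idempotent-preservation used in the proof of Proposition~\ref{prop:local-tilting} and in Subsection~\ref{subsec:fixed-source}.

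The only convention-dependent point is the order in which paths compose, and it is harmless. If paths compose left to right, interchange the images of $\alpha_i$ and $\beta_i$. This is legitimate because $Q_s$ and its relations are symmetric under $\alpha_i\leftrightarrow\beta_i$.
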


Let $A:=\Lambda_{\cO}(P)$, let $e_s$ correspond to $M(s)=\cO$, and put
$Q:=Ae_s$. The next lemma applies the standard generator--cogenerator
argument to the longest summand over $\cO$.

\begin{lemma}\label{lem:local-faithful-pi}
The left $A$-module $Q$ is indecomposable, faithful, projective, and
injective.
\end{lemma}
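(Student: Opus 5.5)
We identify $Q=Ae_s$ with the representable module $\Hom_{\cO}(M(s),M_P)=\Hom_{\cO}(\cO,M_P)\simeq M_P$, viewed as a left module over $A=\End_{\cO}(M_P)$. With this identification, each of the four properties reduces to a familiar fact about $\cO$-modules. Here we may assume $p_s=\ell$, so that $M(s)=\cO$ as arranged before the statement.

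\emph{Projective and indecomposable.} $Q=Ae_s$ is a direct summand of $A$, so it is projective. Its endomorphism ring is $\End_A(Ae_s)\simeq(e_sAe_s)^{\op}\simeq\End_{\cO}(\cO)^{\op}\simeq\cO$. This ring is local, so $Q$ is indecomposable.

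\emph{Faithful.} Under the identification $Q\simeq M_P$, the action of $a\in A$ on $Q$ is its action on $M_P$. If $a$ annihilates $Q$, then $a=0$ as an endomorphism of $M_P$. Hence $Q$ is faithful.

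\emph{Injective.} This is the step requiring real input. We use the standard generator--cogenerator argument. Since $\cO$ is a finite-dimensional commutative local principal algebra, hence a uniserial self-injective (Frobenius) algebra, the module $\cO$ is both projective and injective over $\cO$. So $M_P$ is an $\cO$-module containing $\cO$ as a summand that is simultaneously a generator and a cogenerator.

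We then apply the functor $F=\Hom_{\cO}(M_P,-)$ from $\cO$-mod to $A^{\op}$-mod, or equivalently use the left-module version via $\Hom_{\cO}(-,M_P)$; we choose conventions so that $Ae_s$ corresponds correctly. The classical Morita--Tachikawa picture gives that $F$ induces an equivalence from $\add_{\cO}(M_P)$ to the projective $A$-modules, and that $F$ sends injective $\cO$-modules to projective-injective $A$-modules, because $M_P$ is a generator.

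Concretely, we show $\Ext^1_A(X,Q)=0$ by identifying $Q\simeq D\Hom_{\cO}(M_P,D\cO)$ with an injective $A$-module. Since $\cO\simeq D\cO$ as $\cO$-modules (the Frobenius property), we get
\[
 D\Hom_{\cO}(M_P,\cO)\simeq D(e_sA)\simeq\Hom_{\cO}(\cO,M_P)\simeq Ae_s,
\]
where the middle isomorphism uses $\Hom_{\cO}(M_P,\cO)\simeq D\Hom_{\cO}(\cO,M_P)$. This in turn follows from the self-duality $D(\cO/\pi^{p})\simeq\cO/\pi^{p}$ and the symmetry of the trace-type pairing.

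The main obstacle is making this duality natural in $A$, i.e.\ compatible with the $A$-action, and not merely a dimension match. The first attempt is to use the Nakayama functor: $\nu(Ae_s)=D(e_sA)$, and $e_sA\simeq\Hom_{\cO}(M_P,\cO)$. It then remains to check $D\Hom_{\cO}(M_P,\cO)\simeq\Hom_{\cO}(\cO,M_P)$ as left $A$-modules. This follows by fixing a nondegenerate $\cO$-linear form $\cO\to D\cO$ (the Frobenius form) and applying the tensor--hom adjunction $\Hom_{\cO}(M_P,D\cO)\simeq D(M_P)$. Since $\nu(Ae_s)\simeq Ae_s$ and $\nu$ sends projectives to injectives, $Q$ is injective.
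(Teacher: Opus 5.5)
Your proof is correct and follows essentially the same route as the paper. Projectivity and indecomposability come from $\End_A(Ae_s)\simeq(e_sAe_s)^{\op}\simeq\cO$, faithfulness from $Q\simeq\Hom_{\cO}(\cO,M_P)\simeq M_P$, and injectivity from the $A$-linear chain $D(e_sA)\simeq D\Hom_{\cO}(M_P,\cO)\simeq D\Hom_{\cO}(M_P,D\cO)\simeq D^2M_P\simeq M_P\simeq Ae_s$, obtained from $\cO\simeq D\cO$ and Hom--tensor adjunction. The only differences are that the paper proves $\cO\simeq D\cO$ directly, via a linear form nonvanishing on $\operatorname{soc}\cO=(\pi^{\ell-1})$, where you quote the Frobenius property; and your Morita--Tachikawa paragraph is not needed for the concrete argument.
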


\begin{proof}
The module $Q=Ae_s$ is projective. Put $D=\Hom_R(-,R)$ and choose
$\lambda\in D\cO$ whose restriction to
$\operatorname{soc}\cO=(\pi^{\ell-1})$ is nonzero. Since every nonzero
ideal of $\cO$ contains the socle, for each $0\ne a\in\cO$ there is
$b\in\cO$ such that $\lambda(ab)\ne0$. Hence
\[
 \cO\longrightarrow D\cO,
  \ \ a\longmapsto(b\longmapsto\lambda(ab))
\]
is injective. It is a bimodule map because $\cO$ is commutative, and it is
an isomorphism by finite dimensionality. Consequently, one has 
\[
 \End_A(Q)\simeq(e_sAe_s)^{\op}\simeq\cO.
\]
Moreover, as left $A$-modules,
\[
 \begin{aligned}
 D(e_sA)&\simeq D\Hom_{\cO}(M_P,\cO)
 \simeq D\Hom_{\cO}(M_P,D\cO)
 \simeq D^2M_P
 \simeq M_P
 \simeq\Hom_{\cO}(\cO,M_P)
 \simeq Ae_s.
 \end{aligned}
\]
Since $\cO$ is local, $\End_A(Q)$ is local, and hence $Q$ is indecomposable.
Since $e_sA$ is projective as a right $A$-module,
$Q\simeq D(e_sA)$ is injective. Finally,
$Q\simeq\Hom_{\cO}(\cO,M_P)$, and every $m\in M_P$ equals $u_m(1)$ for
$u_m(r)=rm$. Thus an element of $A=\End_{\cO}(M_P)$ annihilates $Q$ only
if it annihilates $M_P$. Therefore $\ann_AQ=0$, and $Q$ is faithful.
\end{proof}

\section{Tilting posets of matrix centralizers}
\label{sec:global}

As a first step toward the fixed-source realization problem, we determine the
tilting poset of the original algebra. We prove
Theorem~\ref{thm:intro-poset} by computing the tilting poset of one primary
block and extending the formula to arbitrary matrix centralizers via primary
decomposition. An atom graph determines all nontrivial weak-order factors
from the abstract poset, while the center determines the number of remaining
point factors. After the proof, we describe the limitations of this
reconstruction and deduce derived invariance.

\begin{proposition}\label{prop:local-tilting}
Let $\cO$ be a finite-dimensional commutative local principal $R$-algebra,
write $\rad\cO=(\pi)$, let $\operatorname{LL}(\cO)$ denote its Loewy length,
and let
$P=\{p_1<\cdots<p_s\}$ be a nonempty set of positive integers with
$p_s\leq\operatorname{LL}(\cO)$. Then
\[
 \tiltA\Lambda_{\cO}(P)\simeq\Weak(\Sigma_s),
 \ \ |\tiltA\Lambda_{\cO}(P)|=s!.
\]
Hence the tilting poset depends only on $s=|P|$; in particular,
it is independent of the Loewy length, the exponent values, and the residue
field.
\end{proposition}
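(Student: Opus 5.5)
We will chain the three reduction results of Section~\ref{sec:prelim}. First, as explained before Lemma~\ref{lem:local-faithful-pi}, replacing $\cO$ by $\cO/(\pi^{p_s})$ does not change $\Lambda_{\cO}(P)$. We may therefore assume $p_s=\operatorname{LL}(\cO)$ and $M(s)=\cO$. Put $A=\Lambda_{\cO}(P)$, $e=e_s$ and $Q=Ae_s$. By Lemma~\ref{lem:local-faithful-pi}, $Q$ is indecomposable (hence basic), faithful, projective and injective. Proposition~\ref{prop:faithful-slice} then gives a poset isomorphism
\[
 \tiltA A\simeq\stt(A/Ae_sA).
\]

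Next we identify the quotient $A/Ae_sA$. Since $z_\pi\in Z(A)\cap\rad A$, Proposition~\ref{prop:central-reduction-labels} applies to the ideal $(z_\pi)$. The algebra $A/Ae_sA$ is the quotient of $A$ by the ideal generated by $e_s$ and $z_\pi$ together, because $z_\pi\in Ae_sA$ is not claimed. We plan to handle this cleanly as follows. The image of $Ae_sA$ in $\overline A=A/(z_\pi)\simeq B_s(k)$ is the ideal generated by the idempotent of vertex $s$, by Proposition~\ref{prop:uniform-quotient}, which is idempotent-preserving. We then want to apply central reduction to $A/Ae_sA$ itself. The image of $z_\pi$ there is still central and radical, and it generates the kernel of
\[
 A/Ae_sA\to B_s(k)/B_s(k)\bar e_sB_s(k).
\]
Hence Proposition~\ref{prop:central-reduction-labels} yields
\[
 \stt(A/Ae_sA)\simeq\stt\bigl(B_s(k)/\langle \bar e_s\rangle\bigr).
\]
Deleting vertex $s$ from the doubled line $Q_s$ with relations $\alpha_i\beta_i=\beta_i\alpha_i=0$ leaves exactly $B_{s-1}(k)$. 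When $s=1$ this is the zero algebra, with a single support $\tau$-tilting module.

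It remains to show $\stt B_{s-1}(k)\simeq\Weak(\Sigma_s)$. This is the support $\tau$-tilting poset of the preprojective-type algebra of type $A_{s-1}$ modulo its $2$-cycles, and we expect it to be the main input. We would quote it from \cite[Theorem~1.2]{HLZtau}, which gives $\stt\Lambda_{\cO}(P')\simeq\Weak(\Sigma_{|P'|+1})$. Applying that result with $|P'|=s-1$, together with Proposition~\ref{prop:uniform-quotient} and Proposition~\ref{prop:central-reduction-labels}, gives $\stt B_{s-1}(k)\simeq\Weak(\Sigma_s)$. Alternatively, one checks directly via Mizuno's theorem for preprojective algebras of type $A_{s-1}$, whose quotient by $2$-cycles has the same $g$-vector fan because the $2$-cycles are central-type radical elements. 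The cardinality $s!$ follows immediately, and independence from the Loewy length, the exponents and $k$ is visible since the final algebra $B_{s-1}(k)$ depends only on $s$ and $k$, while the poset does not depend on $k$.

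The step we expect to be most delicate is the compatibility of the idempotent quotient with central reduction. One must check that $(z_\pi)+Ae_sA$ reduces correctly, so that Proposition~\ref{prop:central-reduction-labels} applies to $A/Ae_sA$ with the ideal generated by the image of $z_\pi$. One must also check that the idempotent-preserving isomorphism of Proposition~\ref{prop:uniform-quotient} sends $e_s$ to the endpoint vertex $s$ of the line, rather than to an interior vertex, so that the result is $B_{s-1}(k)$ rather than a product of two shorter lines.
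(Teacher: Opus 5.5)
Your proposal is correct and follows the paper's proof essentially step for step. The chain is the same: Jasso reduction at the faithful projective-injective summand $Ae_s$ (Proposition~\ref{prop:faithful-slice}), then central reduction of $C=A/Ae_sA$ by the image $\bar z_\pi$, then the identification $C/(\bar z_\pi)\simeq A/(Ae_sA+(z_\pi))\simeq B_s(k)/B_s(k)e_sB_s(k)\simeq B_{s-1}(k)$, and finally the known computation $\stt B_{s-1}(k)\simeq\Weak(\Sigma_s)$.

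The remaining differences are minor:
\begin{enumerate}
\item For the last step the paper cites \cite[Corollary~3.2]{HLZtau} directly, up to inverting left to right weak order. You instead pass back through a primary block.
\item The paper handles $s=1$ separately via self-injectivity of $\cO$, whereas you use the zero algebra; both work.
\item Your sentence saying that $A/Ae_sA$ is the quotient by the ideal generated by $e_s$ and $z_\pi$ is misphrased. That description applies to $C/(\bar z_\pi)$, which is what your subsequent argument actually uses.
\end{enumerate}
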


\begin{proof}
Replacing $\cO$ by $\cO/(\pi^{p_s})$ does not change
$\Lambda_{\cO}(P)$, so we may assume $p_s=\operatorname{LL}(\cO)$. Put
$k=\cO/(\pi)$ and $A=\Lambda_{\cO}(P)$, and let $e_s$ correspond to the
summand $\cO/(\pi^{p_s})=\cO$. Assume $s\geq2$ and put $C:=A/Ae_sA$. By
Proposition~\ref{prop:faithful-slice}, $\tiltA A\simeq\stt C$.
Let $\bar z_\pi$ be the image of $z_\pi$ in $C$. Then
$\bar z_\pi\in Z(C)\cap\rad C$. Since
Proposition~\ref{prop:uniform-quotient} preserves the vertex idempotents,
\[
 C/(\bar z_\pi)
 \simeq A/(Ae_sA+(z_\pi))
 \simeq B_s(k)/B_s(k)e_sB_s(k)
 \simeq B_{s-1}(k).
\]
Combining Proposition~\ref{prop:central-reduction-labels} with
\cite[Corollary~3.2]{HLZtau}, we obtain
\[
 \tiltA A\simeq\stt C
 \simeq\stt(C/(\bar z_\pi))
 \simeq\stt B_{s-1}(k)
 \simeq\Weak(\Sigma_s).
\]
The cited result is stated for the left weak order; inversion identifies it
with the right weak order used here.

If $s=1$, then $A=\cO$ is self-injective. Thus every $A$-module of finite
projective dimension is projective. If $T$ is tilting, then
\textup{(T3)} splits, so $A\in\add T$. Hence
$\add T=\add A$, so $A$ is the unique basic tilting module.
The formula follows for all $s$, and $|\tiltA A|=s!$.
\end{proof}

For $\cO=k[t]/(t^s)$ and $P=\{1,\ldots,s\}$,
Proposition~\ref{prop:local-tilting}
recovers the saturated Auslander case. The basic tilting modules were
parametrized by $\Sigma_s$ in \cite{BHRR}, and the weak-order structure and
mutation labels were determined in \cite{IyamaZhang}. Proposition~\ref{prop:local-tilting}
shows that the same abstract weak-order poset occurs for arbitrary positive
gaps, Loewy lengths, and residue fields.

\begin{proof}[{\bf Proof of Theorem~\ref{thm:intro-poset}}]
Let $f\in R[x]$ be monic and irreducible, and let
$P=\{p_1<\cdots<p_s\}$. Since $f$ is irreducible,
$\cO_f:=R[x]/(f^{p_s})$ is a finite-dimensional commutative local principal
$R$-algebra with radical $(f)/(f^{p_s})$. The action on every summand
in the definition of $\Lambda_f(P)$ in \eqref{eq:basic-centralizer} factors
through $\cO_f$. Hence Proposition~\ref{prop:local-tilting} gives
\[
 \tiltA\Lambda_f(P)\simeq\Weak(\Sigma_{|P|}).
\]
Since tilting posets are Morita invariant and commute with finite
products, we obtain
\begin{equation}\label{eq:global-tilting-poset}
 \tiltA S_n(c,R)\simeq
 \prod_{f\in\Irr(c)}\Weak(\Sigma_{\kappa_c(f)}).
\end{equation}

Since the weak order of a finite Coxeter group is a lattice
\cite[Section~3.2]{BjornerBrenti}, the following graph is defined for every
factor in \eqref{eq:global-tilting-poset}. If $L$ is a finite lattice with
minimum $\hat0$, let $G(L)$ be the graph on the atoms of $L$ in which
distinct $a,b$ are adjacent if $|[\hat0,a\vee b]|=6$.

We claim that $G(\Weak(\Sigma_s))$ is a path with $s-1$ vertices for
$s\geq2$ and is empty for $s=1$. Indeed, the atoms are
$s_1,\ldots,s_{s-1}$. If $|i-j|>1$, then $s_i,s_j$
commute and $[1,s_i\vee s_j]=\{1,s_i,s_j,s_is_j\}$. If $|i-j|=1$, the
braid relation gives
\[
 [1,s_i\vee s_j]=\{1,s_i,s_j,s_is_j,s_js_i,s_is_js_i\}.
\]
Thus the atom graph is the path $s_1-\cdots-s_{s-1}$. Atoms from distinct
product factors have a four-element interval below their join, so the product
graph is the disjoint union of these paths. Consequently, if
$L=\prod_{j=1}^q\Weak(\Sigma_{r_j})$ with $r_j\geq2$, then the connected
component sizes of $G(L)$ are $r_1-1,\ldots,r_q-1$, with multiplicities.

It follows that the connected-component sizes of the atom graph of
\eqref{eq:global-tilting-poset} are precisely
\[
 \ms{\kappa_c(f)-1\mid\kappa_c(f)\geq2}.
\]
Consequently, the abstract poset determines the multiset of all
$\kappa_c(f)\geq2$. Conversely, \eqref{eq:global-tilting-poset} shows that
this multiset determines the poset, since $\Weak(\Sigma_1)$ is a point.

Finally, for $r\geq1$, put
\[
 m_r(c)=|\{f\in\Irr(c)\mid\kappa_c(f)=r\}|,
 \ \  t(c)=|\Irr(c)|.
\]
The poset determines $m_r(c)$ for every $r\geq2$. By
Proposition~\ref{prop:center-matrix}, the primitive central idempotents of
$Z(S_n(c,R))$ correspond to its local factors, so the center determines
$t(c)$. Therefore
\[
 m_1(c)=t(c)-\sum_{r\geq2}m_r(c).
\]
Hence the center together with the poset determines the full multiset
$\ms{\kappa_c(f)}$. This completes the proof.
\end{proof}

The following example shows that primary factors with $\kappa_c(f)=1$ cannot
be detected by the tilting poset.

\begin{example}\label{ex:invisible-blocks}
Let $c=[0]\in M_1(R)$ and $d=\operatorname{diag}(0,1)\in M_2(R)$. Then
$S_1(c,R)=R$ and $S_2(d,R)=R\times R$, while both tilting posets
are points. Their exponent-count multisets are $\ms{1}$ and $\ms{1,1}$.
Thus the tilting poset alone determines neither the number of
primary factors nor the number of isomorphism classes of simple modules.
\end{example}

For any bijection $\sigma:\Irr(c)\to\Irr(d)$,
Proposition~\ref{prop:local-tilting} shows that the tilting posets of the
corresponding primary blocks are isomorphic precisely when
$\kappa_c(f)=\kappa_d(\sigma(f))$ for every $f$. An isomorphism of the local
center factors alone does not imply these equalities. Thus
Theorem~\ref{thm:intro-poset} recovers the full multiset of the integers
$\kappa_c(f)$, but it does not canonically pair them with the local center
factors.

The next corollary shows that, although the tilting poset does not
characterize derived equivalence, it is constant on every derived-equivalence
class of matrix centralizers.

\begin{corollary}\label{cor:derived-invariance}
If two matrix centralizer algebras are derived equivalent, then their
tilting posets are isomorphic.
\end{corollary}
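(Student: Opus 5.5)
The plan is to combine Li--Xi's criterion with the product formula from Theorem~\ref{thm:intro-poset}. Let $A=S_n(c,R)$ and $B=S_m(d,R)$ be derived equivalent.

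First, Theorem~\ref{thm:LiXi-criterion} shows that $c$ and $d$ are $D$-equivalent. So there is a bijection $\pi:\mathcal M_c\to\mathcal M_d$ such that $\mathcal H_{P_c(f)}=\mathcal H_{P_d(\pi(f))}$ for every $f\in\mathcal M_c$.

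Second, we translate this into primary notation using the identifications recorded before Theorem~\ref{thm:LiXi-criterion}. The set $\mathcal M_c$ is in bijection with $\Irr(c)$ via $f\mapsto f^{r_c(f)}$, and $P_c(f^{r_c(f)})=P_c(f)$. Hence $\pi$ induces a bijection $\sigma:\Irr(c)\to\Irr(d)$ such that the multiset of entries of $g(P_c(f))$ equals the multiset of entries of $g(P_d(\sigma(f)))$.

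Third, the gap word $g(P)$ has exactly $|P|$ entries, and all of them are positive. Equal multisets have equal cardinalities (counted with repetition), so $\kappa_c(f)=\kappa_d(\sigma(f))$ for every $f\in\Irr(c)$.

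Finally, apply Theorem~\ref{thm:intro-poset} (equation~\eqref{eq:global-tilting-poset}) to both algebras. This gives
\[
 \tiltA A\simeq\prod_{f\in\Irr(c)}\Weak(\Sigma_{\kappa_c(f)})
 \simeq\prod_{f\in\Irr(c)}\Weak(\Sigma_{\kappa_d(\sigma(f))})
 \simeq\tiltA B,
\]
where the last isomorphism reindexes the product along $\sigma$. If one only has derived equivalence with some matrix centralizer up to isomorphism, the statement still holds, because tilting posets are invariant under algebra isomorphism and Morita equivalence.

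The only point needing care is the bookkeeping in the second step. One must check that Li--Xi's set $\mathcal M_c$ of maximal elementary divisors corresponds bijectively to $\Irr(c)$, and that their $P_c(f)$ agrees with the primary exponent set. The paper already records both facts, so I expect no real obstacle.
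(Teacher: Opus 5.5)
Your proposal is correct and follows essentially the same route as the paper's proof. Li--Xi's criterion (Theorem~\ref{thm:LiXi-criterion}) pairs the primary blocks so that paired blocks have equal successive-gap multisets, hence equal values of $\kappa$, and the product formula of Theorem~\ref{thm:intro-poset} then gives the isomorphism; your explicit identification of $\mathcal M_c$ with $\Irr(c)$ only spells out bookkeeping the paper leaves implicit.
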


\begin{proof}
By Li--Xi's criterion \cite[Theorem~1.1 and
Definition~3.1(2)]{LiXiD}, corresponding primary factors have the same
successive-gap
multisets and hence the same numbers of distinct exponents. Therefore
Theorem~\ref{thm:intro-poset} implies the asserted poset isomorphism.
\end{proof}

We end this section with a remark on the scope of
Theorem~\ref{thm:intro-poset} and on the order of the reductions used in its
proof.

\begin{remark}
The order of the reductions in Proposition~\ref{prop:local-tilting} is
essential. Let
\[
 \Gamma_2(k)=\End_{k[t]/(t^2)}
 \bigl(k[t]/(t)\oplus k[t]/(t^2)\bigr),
\]
and let $z$ be multiplication by $t$ on both summands. Then
$\Gamma_2(k)/(z)\simeq B_2(k)$. By
Proposition~\ref{prop:local-tilting}, $\Gamma_2(k)$ has two basic tilting
modules, whereas $B_2(k)$ is
self-injective and its regular module is the unique basic tilting
module. Thus central reduction preserves the support $\tau$-tilting poset
here but not the tilting slice.

Theorem~\ref{thm:intro-poset} provides only partial reconstruction, unlike
the reconstruction of path algebras from tilting posets in
\cite{HappelUngerReconstruction}. In \cite[Theorem~1.2]{HLZtau}, the support
$\tau$-tilting poset has a factor $\Weak(\Sigma_{\kappa_c(f)+1})$ for every
primary block and therefore also detects the blocks with $\kappa_c(f)=1$. In
the tilting slice these factors become points. The center recovers their
total number, but it neither provides a blockwise pairing nor recovers the
intermediate exponents. The center--$HH_0$ invariant in
\cite[Theorem~1.3]{HLZtau} recovers the unordered gap multiset, whereas
Section~\ref{sec:gaps} uses an ordered gap word and realizes each of its
permutations over a fixed source.
\end{remark}

\section{Fixed-source realization for primary centralizers}
\label{sec:gaps}

This section proves Theorem~\ref{thm:intro-fixed}. Although
Section~\ref{sec:global} determines the tilting poset, that poset does not
record the ordered gaps in a primary block. For a fixed positive gap word
$g=(g_1,\ldots,g_s)$, we construct, for every $w\in\Sigma_s$, a basic tilting
module over the original primary algebra whose opposite endomorphism algebra
is isomorphic to $A_{g\cdot w}$.
Subsection~\ref{subsec:adjacent-gaps} realizes adjacent gap exchanges and
tracks their labels, while Subsection~\ref{subsec:fixed-source} transports
these exchanges to the fixed source and completes the proof of
Theorem~\ref{thm:intro-fixed}.

Retain the notation of Section~\ref{sec:prelim}. Set
$q_i=\sum_{j=1}^ig_j$ and $r=q_s$. Let $\cO$ be a finite-dimensional
commutative local principal $R$-algebra of Loewy length $r$, and choose
$\pi$ with $\rad\cO=(\pi)$. Put $M(a)=\cO/(\pi^a)$, $M(0)=0$, and
\[
 M_g=\bigoplus_{i=1}^sM(q_i),\ \ A_g=\End_{\cO}(M_g).
\]
Let $e_j^g$ be the primitive idempotent attached to $M(q_j)$. The symmetric
group acts on gap words on the right by $(g\cdot w)_j=g_{w(j)}$. Thus
$s_i$ exchanges the $i$th and $(i+1)$st gaps, and
$(g\cdot u)\cdot v=g\cdot(uv)$.

\subsection{Adjacent gap exchanges}
\label{subsec:adjacent-gaps}

We first construct an idempotent-preserving isomorphism from each algebra
$A_h$ to its opposite, and then realize adjacent gap exchanges as labeled
irreducible tilting mutations over $A_h$.

\begin{lemma}\label{lem:self-opposite}
For every positive gap word $h$ with total sum $r$, there is an
idempotent-preserving algebra isomorphism
\[
 \iota_h:A_h\xrightarrow{\ \sim\ }A_h^{\op}.
\]
Thus a right $A_h$-module can be transported to a left $A_h$-module without
changing the labels of the standard projectives.
\end{lemma}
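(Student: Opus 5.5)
Write $M=M_h=\bigoplus_i M(q_i)$ and $A_h=\End_{\cO}(M)$. The plan is to build $\iota_h$ from an $\cO$-linear duality on $M$ that fixes each summand $M(q_i)$ up to isomorphism. Since $\cO$ is commutative, local, principal and self-injective (see the proof of Lemma~\ref{lem:local-faithful-pi}), the functor $(-)^\vee:=\Hom_{\cO}(-,\cO)$ is an exact contravariant duality on finitely generated $\cO$-modules. For each cyclic module there is an isomorphism
\[
 M(a)^\vee=\Hom_{\cO}(\cO/(\pi^a),\cO)\simeq\ann_{\cO}(\pi^a)=(\pi^{r-a})\simeq\cO/(\pi^a)=M(a),
\]
realized by $\phi\mapsto\phi(1)$ followed by division by $\pi^{r-a}$. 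Choose such isomorphisms $\theta_a:M(a)\to M(a)^\vee$ and put $\theta=\bigoplus_i\theta_{q_i}:M\to M^\vee$.

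Define
\[
 \iota_h(u)=\theta^{-1}\circ u^\vee\circ\theta\qquad(u\in A_h),
\]
where $u^\vee:M^\vee\to M^\vee$ is the dual map. The rule $u\mapsto u^\vee$ is $R$-linear (indeed $\cO$-linear), injective because the duality is faithful, and satisfies $(uv)^\vee=v^\vee u^\vee$. Hence $\iota_h$ is an injective anti-homomorphism $A_h\to A_h$, that is, an algebra homomorphism $A_h\to A_h^{\op}$. It is bijective by dimension count, or because $(-)^{\vee\vee}\simeq\mathrm{id}$ naturally. For the idempotents, the projection $e_j^h$ onto $M(q_j)$ dualizes to the projection of $M^\vee=\bigoplus M(q_i)^\vee$ onto $M(q_j)^\vee$. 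Because $\theta$ is diagonal with respect to the decomposition, conjugation returns exactly $e_j^h$. So $\iota_h(e_j^h)=e_j^h$.

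A right $A_h$-module $N$ then becomes a left module via $a\cdot n=n\,\iota_h(a)$. Under this transport $e_jA_h$ goes to $A_he_j$, because $\iota_h$ maps $e_jA_h$ onto $A_he_j$ as sets. This gives the label statement.

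The only point needing care is that the identifications $\theta_a$ need not be canonical or natural in $a$. This is harmless, since only a single fixed $\theta$ is used and naturality is never required; the anti-multiplicativity comes entirely from $\vee$. I would also double-check that $\Hom_{\cO}(\cO/(\pi^a),\cO)$ is cyclic of length $a$. This follows from the principal ideal structure: $\ann(\pi^a)=(\pi^{r-a})$ since $\pi$ has nilpotency index $r$.
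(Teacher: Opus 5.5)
Your proof is correct and is essentially the paper's argument: the paper conjugates the $R$-linear dual $D(\alpha)$, with $D=\Hom_R(-,R)$, by a block-diagonal isomorphism $DM_h\simeq M_h$ built from $DM(t)\simeq\Hom_{\cO}(M(t),\cO)\simeq(\pi^{r-t})\simeq M(t)$, while you conjugate the $\cO$-linear dual $\Hom_{\cO}(-,\cO)$ directly, and the two dualities coincide via $D\cO\simeq\cO$. One small precision for the label statement: the left-module isomorphism from the transported $e_jA_h$ to $A_he_j$ is $n\mapsto\iota_h^{-1}(n)$, since $\iota_h^{-1}(n\,\iota_h(a))=a\,\iota_h^{-1}(n)$, and $\iota_h^{-1}$ also maps $e_jA_h$ onto $A_he_j$.
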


\begin{proof}
The proof of Lemma~\ref{lem:local-faithful-pi} gives an $\cO$-bimodule
isomorphism $D\cO\simeq\cO$. Hence
\[
 DM(t)\simeq\Hom_{\cO}(M(t),\cO)
 \simeq(\pi^{r-t})\simeq M(t).
\]
Choose a block-diagonal isomorphism $\delta_h:DM_h\xrightarrow{\sim}M_h$
from these summandwise isomorphisms, and define
\[
 \iota_h(\alpha)=\delta_hD(\alpha)\delta_h^{-1}
 \ \ (\alpha\in A_h).
\]
Since $D(\alpha\beta)=D(\beta)D(\alpha)$, the map $\iota_h$ is an
anti-isomorphism and hence an algebra isomorphism $A_h\to A_h^{\op}$.
Since $\delta_h$ is block diagonal, $\iota_h(e_j^h)=e_j^h$ for every $j$.
\end{proof}

\begin{proposition}\label{lem:adjacent-gap}
Let $1\leq i<s$. Set $b=q_{i-1}$, $d=q_i$, $c=q_{i+1}$, and
$a=b+g_{i+1}$, where $q_0=0$. If
$N=\bigoplus_{j\neq i}M(q_j)$, then there is a minimal
$\add(N)$-split exact sequence
\begin{equation}\label{eq:gap-exchange}
 0\longrightarrow M(d)\longrightarrow
 M(b)\oplus M(c)\longrightarrow M(a)\longrightarrow0.
\end{equation}
This exact sequence produces a basic tilting $A_g$-module $L_i(g)$ with projective
dimension at most one. Moreover,
$\End_{A_g}(L_i(g))^{\op}\simeq A_{g\cdot s_i}$. The module $L_i(g)$ is
the irreducible tilting mutation of the regular module at $A_ge_i^g$. The
endomorphism-ring isomorphism preserves all unchanged labels. It assigns
label $i$ to the new summand.
\end{proposition}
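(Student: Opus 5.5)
The plan is to realize \eqref{eq:gap-exchange} as a $\mathcal D$-split sequence in $\add_{\cO}(M_g\oplus M(a))$ and to apply Hu--Xi's construction \cite[Theorem~3.5 and Lemma~3.4]{HuXi}. The construction is then transported from right to left modules by Lemma~\ref{lem:self-opposite}.

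\emph{Construction and exactness.} Write $X=M(d)$, $Y=M(a)$, and note $d+a=b+c$. Define
\[
u=\begin{pmatrix}\mathrm{pr}\\ \pi^{c-d}\end{pmatrix}:M(d)\to M(b)\oplus M(c),
\qquad
v=\begin{pmatrix}\pi^{a-b}&-\mathrm{pr}\end{pmatrix}:M(b)\oplus M(c)\to M(a),
\]
where $\mathrm{pr}$ is the canonical projection. If $b=0$, the $M(b)$ terms are simply absent.
\begin{itemize}
\item \emph{$vu=0$.} This holds because $a-b+(\text{projection})$ and $c-d$ give the same composite $M(d)\to M(a)$, namely multiplication by $\pi^{a-b}=\pi^{c-d}$.
\item \emph{$u$ is injective.} An element $x$ with $\pi^{c-d}x=0$ in $M(c)$ lies in $(\pi^{d})$, which is $0$ in $M(d)$.
\item \emph{$v$ is surjective.} The projection component is onto.
\end{itemize}
Exactness in the middle then follows by comparing $R$-dimensions, since $d+a=b+c$.

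\emph{The approximation property.} This is the step I expect to be the main obstacle. I will use that $\Hom_{\cO}(M(x),M(y))$ is cyclic, generated by the map $1\mapsto\pi^{\max(0,y-x)}$.
\begin{itemize}
\item \emph{$u$ is a left $\add(N)$-approximation.} Every map $M(d)\to M(q_j)$ with $j<i$ factors through $\mathrm{pr}:M(d)\to M(b)$, because $q_j\le b$. Every map $M(d)\to M(q_j)$ with $j>i$ factors through $\pi^{c-d}:M(d)\to M(c)$, because $q_j\ge c$ and the generator is $\pi^{q_j-d}=\pi^{q_j-c}\pi^{c-d}$ after lifting.
\item \emph{$v$ is a right $\add(N)$-approximation.} This is the dual computation, using $q_j\le b<a$ for $j<i$ and $q_j\ge c>a$ for $j>i$.
\item \emph{Minimality.} The components of $u$ and $v$ lie in the radical of the category, so no nonzero summand splits off. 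Note that $M(a)\not\simeq M(q_j)$ for all $j$, since $b<a<c$.
\end{itemize}

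\emph{Tilting and endomorphism ring.} By Hu--Xi, for $\Lambda=\End_{\cO}(N\oplus X)=A_g$ the module
\[
\Hom_{\cO}(M_g,N)\oplus\operatorname{Coker}\bigl(\Hom(M_g,u)\bigr)
\]
is a tilting module of projective dimension at most one, whose endomorphism ring is $\End_{\cO}(N\oplus Y)$.
\begin{itemize}
\item \emph{Projective dimension.} The cokernel has the projective resolution $0\to\Hom(M_g,X)\to\Hom(M_g,M(b)\oplus M(c))$, since $\Hom(M_g,u)$ is injective.
\item \emph{Identifying the target.} The exponents of $N\oplus M(a)$ are $q_1,\dots,q_{i-1},a,q_{i+1},\dots,q_s$. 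Their gap word is $g\cdot s_i$, because $a-b=g_{i+1}$ and $c-a=g_i$. Hence the endomorphism ring is $A_{g\cdot s_i}$.
\item \emph{Conventions.} Because $\Hom_{\cO}(M_g,-)$ naturally yields modules on the other side, I will use $\iota_g$ and $\iota_{g\cdot s_i}$ from Lemma~\ref{lem:self-opposite}. These convert to left modules and to $\End(-)^{\op}$ while fixing every idempotent, and they give $L_i(g)$.
\end{itemize}

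\emph{Basicness, mutation, and labels.} The module $L_i(g)$ shares the indecomposable projective summands $A_ge^g_j$, $j\neq i$, with $A_g$. Its remaining summand is indecomposable and nonprojective, so it is distinct from all of these; hence $L_i(g)$ is basic. A basic almost complete tilting module has at most two complements. Therefore $L_i(g)$ is the irreducible tilting mutation of $A_g$ at $A_ge_i^g$, and it lies below $A_g$ in the generation order.

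The Hu--Xi isomorphism sends the summand $\Hom(M_g,M(q_j))$ to the idempotent of $M(q_j)$, and $\iota$ fixes idempotents. So every unchanged summand keeps its label, and the new summand corresponds to $M(a)$, which sits in position $i$ of the exponent list. It therefore receives label $i$.
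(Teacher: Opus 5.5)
Your proposal is correct and follows the paper's overall architecture. It uses the same exact sequence (the sign is placed on the other map), the same cyclicity of $\Hom_{\cO}(M(x),M(y))$ to get both approximations, the same Hu--Xi module $\Hom_{\cO}(M_g,N)\oplus\operatorname{Coker}\Hom_{\cO}(M_g,u)$ transported along $\iota_g$ and $\iota_{g\cdot s_i}$, and the same idempotent tracking for labels.

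It differs from the paper in two local steps.

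\emph{Minimality.} The paper uses the valuations $v_\pi$ to show that neither component of $F$ (resp.\ $G$) factors through the other. It then analyzes a matrix $U$ with $UF=F$ and treats $b=0$ separately. You use exactness instead, which is shorter and uniform in $b$. However, your sentence ``no nonzero summand splits off'' should be made explicit as follows. If $hu=u$, then $1-h=tv$ for some $t\colon M(a)\to M(b)\oplus M(c)$. Since $v$ is radical ($M(a)$ is indecomposable and not isomorphic to $M(b)$ or $M(c)$), $tv$ lies in $\rad\End_{\cO}(M(b)\oplus M(c))$, so $h$ is invertible. Dually, radicality of $u$ gives right minimality of $v$. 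Note that radicality of $v$ gives minimality of $u$ and vice versa; radicality of a map does not by itself imply its own minimality.

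\emph{Mutation.} The paper reads off the exchange triangle from the minimal projective presentation and cites Aihara--Iyama. You instead use the Happel--Unger theorem that an almost complete tilting module has at most two complements, together with the fact that exchange pairs are the Hasse arrows. This requires the new summand $C=\operatorname{Coker}\Hom_{\cO}(M_g,u)$ to be indecomposable and nonprojective. Indecomposability follows from the Hu--Xi correspondence $\End(C)\simeq\End_{\cO}(M(a))$, which you only invoke in your last paragraph; state it before the basicness claim. Nonprojectivity holds because the presentation of $C$ is nonsplit, as $u$ is not a split monomorphism.

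The paper's route has one advantage: it records that the exchange triangle is given by the minimal left approximation $\Hom_{\cO}(M_g,u)$. This is exactly what the proof of Theorem~\ref{thm:fixed-primary} later uses to identify $F_w(T_{ws_i})$ with $L_i(h)$. With your route, that observation would have to be added separately.
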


\begin{proof}
We have $b<a<c$, $b<d<c$, and $a+d=b+c$. Let $p_{u,v}:M(u)\twoheadrightarrow
M(v)$ be the canonical projection for $u\geq v$. For $u\leq v$, let
$j_{u,v}:M(u)\hookrightarrow M(v)$ be given by
$j_{u,v}(\bar z)=\overline{\pi^{v-u}z}$. Put
$F:=(-p_{d,b},j_{d,c})^{\mathsf T}$ and $G:=(j_{b,a},p_{c,a})$.
Since $a-b=c-d$, we have
\[
 GF=-j_{b,a}p_{d,b}+p_{c,a}j_{d,c}
    =-\pi^{a-b}+\pi^{c-d}=0.
\]
Since $j_{d,c}$ is injective and $p_{c,a}$ is surjective, $F$ is injective
and $G$ is surjective. Moreover, $M(t)$ has composition length $t$. Hence
$\ker G$ and $\operatorname{Im}F$ both have length
$b+c-a=d$, and $GF=0$ implies that \eqref{eq:gap-exchange} is exact.

Since a homomorphism $M(u)\to M(v)$ is determined by the image of $1$,
\[
 \Hom_{\cO}(M(u),M(v))
 \simeq\{y\in M(v)\mid \pi^uy=0\}.
\]
Let $M(t)$ be a summand of $N$. Then $t\leq b$ or $t\geq c$. Therefore
\[
\begin{aligned}
 \Hom_{\cO}(M(d),M(t))
 &=
 \begin{cases}
  \Hom_{\cO}(M(b),M(t))\,p_{d,b},&t\leq b,\\
  \Hom_{\cO}(M(c),M(t))\,j_{d,c},&t\geq c,
 \end{cases}\\
 \Hom_{\cO}(M(t),M(a))
 &=
 \begin{cases}
  j_{b,a}\,\Hom_{\cO}(M(t),M(b)),&t\leq b,\\
  p_{c,a}\,\Hom_{\cO}(M(t),M(c)),&t\geq c.
 \end{cases}
\end{aligned}
\]
Hence $F$ is a left $\add(N)$-approximation and $G$ is a right
$\add(N)$-approximation. This is the local-principal analogue of
\cite[Lemma~2.11]{LiXiD}, with $a$ and $d$ interchanged.

For $0\neq\alpha:M(u)\to M(v)$, define $v_\pi(\alpha)$ by
$\alpha(1)\in\pi^{v_\pi(\alpha)}M(v)\setminus
\pi^{v_\pi(\alpha)+1}M(v)$, and put $v_\pi(0)=\infty$. If $b>0$, then
\[
\begin{aligned}
 v_\pi(p_{d,b})&=0<c-d\leq v_\pi(\beta j_{d,c}),
   &&\beta:M(c)\to M(b),\\
 v_\pi(j_{d,c})&=c-d<c-b\leq v_\pi(\gamma p_{d,b}),
   &&\gamma:M(b)\to M(c),\\
 v_\pi(j_{b,a})&=a-b<c-b\leq v_\pi(p_{c,a}\varepsilon),
   &&\varepsilon:M(b)\to M(c),\\
 v_\pi(p_{c,a})&=0<a-b\leq v_\pi(j_{b,a}\theta),
   &&\theta:M(c)\to M(b).
\end{aligned}
\]
Thus no component of $F$ or $G$ factors through the other component. To
prove that $F$ is left minimal, let $U=(u_{vw})$ be an endomorphism of
$M(b)\oplus M(c)$ satisfying $UF=F$. If $u_{bb}$ were noninvertible, then
$1-u_{bb}$ would be invertible because $\End_{\cO}(M(b))$ is local. The
first component of $UF=F$ would then force $p_{d,b}$ to factor through
$j_{d,c}$, contrary to the first inequality above. Hence $u_{bb}$ is
invertible. The second inequality similarly implies that $u_{cc}$ is
invertible. Since every morphism between the two nonisomorphic
indecomposables lies in the radical, $U$ is invertible. Thus $F$ is left
minimal. The last two inequalities give the dual argument for the right
minimality of $G$.

If $b=0$, then the middle term is $M(c)$. For
$u\in\End_{\cO}(M(c))$, the equality $uj_{d,c}=j_{d,c}$ implies
$u\equiv1\pmod{\pi^d}$. Likewise, $p_{c,a}u=p_{c,a}$ implies
$u\equiv1\pmod{\pi^a}$. Since $a,d>0$, both congruences make $u$ a unit in
$\End_{\cO}(M(c))\simeq\cO/(\pi^c)$. Hence both approximations are minimal.
We also have $N\oplus M(d)=M_g$ and
$N\oplus M(a)=M_{g\cdot s_i}$.

Put $V=M_g$, and let $f:M(d)\to M(b)\oplus M(c)$ be the first map in
\eqref{eq:gap-exchange}. Hu--Xi use the opposite convention for composition.
Thus the endomorphism ring of $V$ in their convention is our $A_g^{\op}$.
In our convention, their Hom construction produces the following right
$A_g$-module:
\[
 \widetilde L_i(g)=\Hom_{\cO}(V,N)\oplus\widetilde C_i,
 \ \
 \widetilde C_i=\operatorname{Coker}\Hom_{\cO}(V,f).
\]
Its new summand has the minimal projective presentation
\[
 0\longrightarrow\Hom_{\cO}(V,M(d))
 \longrightarrow\Hom_{\cO}(V,M(b)\oplus M(c))
 \longrightarrow\widetilde C_i\longrightarrow0.
\]
The equivalence
$\Hom_{\cO}(V,-):\add V\to\mathrm{proj}\text{-}A_g$ sends $f$ to a
minimal left approximation by the unchanged projectives. Transport along
$\iota_g$ preserves this property. Let $C_i$ be the image of
$\widetilde C_i$ under this transport. The resulting left module is
$L_i(g):=(\bigoplus_{j\neq i}A_ge_j^g)\oplus C_i$.
The minimal projective presentation above gives the exchange triangle in
$\Kb(A_g\text{-}\mathrm{proj})$. By
\cite[Definitions~2.30 and~2.34, Proposition~2.33(a)]{AiharaIyama}, it is
the irreducible left, hence downward, mutation of the regular module at
$A_ge_i^g$.

By \cite[Lemma~3.4 and Theorem~3.5]{HuXi}, the $\add(N)$-split sequence
produces a tilting module and
the corresponding endomorphism-ring isomorphism. After transport along
$\iota_g$, ordinary endomorphism rings satisfy
\[
 \End_{A_g}(L_i(g))^{\op}
 \simeq\End_{\mathrm{mod}\text{-}A_g}(\widetilde L_i(g))^{\op}
 \simeq A_{g\cdot s_i}^{\op}
 \xrightarrow{\ \iota_{g\cdot s_i}^{-1}\ }A_{g\cdot s_i}.
\]
To track the labels, inspect the endomorphism-ring map in the proof of
\cite[Lemma~3.4]{HuXi}. For $j\neq i$, the idempotent of
$\Hom_{\cO}(V,M(q_j))$ maps to the projection onto
$M(q_j)\subset N\oplus M(a)$, while the idempotent of $\widetilde C_i$ maps
to the projection onto $M(a)$. Transport along the two maps $\iota$ fixes
these idempotents. Since the target projections are primitive and
$a\notin\{q_j\mid j\neq i\}$, the summands of $L_i(g)$ are pairwise
nonisomorphic. Hence $L_i(g)$ is basic, all unchanged labels are preserved,
and the new summand has label $i$.
\end{proof}

We end this subsection with the following remark about the distinction between
changing-source and fixed-source realizations.

\begin{remark}\label{rem:changing-source}
Iterating Proposition~\ref{lem:adjacent-gap} along a factorization of
$w\in\Sigma_s$ into adjacent transpositions reproduces the changing-source
construction in \cite[Lemma~2.12 and Remark~2.13]{LiXiD}. Thus $A_g$ and
$A_{g\cdot w}$ are joined by a chain of derived equivalences, each induced by
a tilting module over the algebra reached at that step.
By \cite[Theorem~6.4]{Rickard}, the composite is represented over the original
source $A_g$ by a tilting complex. Since the source changes at every step, this
does not imply that the complex is concentrated in degree zero. Hence the
fixed-source problem remains.
\end{remark}

\subsection{Fixed-source realization}
\label{subsec:fixed-source}

We now pass from changing sources to the fixed source $A_g$. A common quotient
transfers the labeled weak-order model of an Auslander algebra to $A_g$;
combining this model with Proposition~\ref{lem:adjacent-gap} proves the
fixed-source realization and completes the proof of
Theorem~\ref{thm:intro-fixed}.

Put $k=\cO/\rad\cO$. Consider the Auslander algebra
\[
 \Gamma_s(k)=\End_{k[t]/(t^s)}
 \left(\bigoplus_{j=1}^sk[t]/(t^j)\right).
\]
Applying Proposition~\ref{prop:uniform-quotient} to $A_g$ and to
$\Gamma_s(k)$ gives idempotent-preserving quotient maps
$A_g\to B_s(k)\leftarrow\Gamma_s(k)$. By
Proposition~\ref{prop:central-reduction-labels}, these maps induce isomorphisms
of the labeled two-term silting posets. Since the last projectives correspond,
Proposition~\ref{prop:faithful-slice} restricts these poset isomorphisms to the
tilting slices. Hence the reductions preserve every mutation label and
direction.

\begin{proposition}\label{lem:labeled-model}
There is a family
$\{T_w=T_{w,1}\oplus\cdots\oplus T_{w,s}\mid w\in\Sigma_s\}$ forming a
complete set of representatives for $\tiltA A_g$, with $T_1=A_g$ and
$T_{w,s}\simeq A_ge_s^g$. If $\ell(ws_i)=\ell(w)+1$, then $T_{ws_i}$ is the
lower cover of $T_w$ and is the downward irreducible mutation at the
summand with label $i$. Its exchange triangle is defined by a minimal left
approximation. Thus the parametrization reverses the generation order and
right multiplication by $s_i$ records the mutation label.
\end{proposition}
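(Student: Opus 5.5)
The plan is to import the labeled weak-order model from the Auslander algebra $\Gamma_s(k)$ through the common quotient $B_s(k)$, as set up in the paragraph before the statement.

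\emph{Step 1: the model over $\Gamma_s(k)$.} By Iyama--Zhang \cite[Proposition~3.10, Theorem~3.18, Corollary~3.19]{IyamaZhang}, the basic tilting $\Gamma_s(k)$-modules form a family $\{T'_w\mid w\in\Sigma_s\}$ with the following properties:
\begin{itemize}
\item $T'_1=\Gamma_s(k)$;
\item each $T'_w$ contains the projective-injective summand attached to $k[t]/(t^s)$;
\item when $\ell(ws_i)=\ell(w)+1$, the module $T'_{ws_i}$ is the downward irreducible mutation of $T'_w$ at the summand labelled $i$.
\end{itemize}
If their convention is the left weak order or uses left multiplication, I will reparametrize by $w\mapsto w^{-1}$, as in the proof of Proposition~\ref{prop:local-tilting}. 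This makes right multiplication by $s_i$ record the label. The summand labelled $s$ never mutates, and I will fix $T'_{w,s}$ to be that projective.

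\emph{Step 2: transport to $A_g$.} Pass each $T'_w$ to its two-term silting complex by Iyama--J\o rgensen--Yang. Reduce it along $\Gamma_s(k)\to B_s(k)$, and lift it along $A_g\to B_s(k)$, using the labeled isomorphisms of Proposition~\ref{prop:central-reduction-labels}. Call the resulting support $\tau$-tilting $A_g$-module $T_w$.

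Both quotient maps are idempotent-preserving, so the last projective corresponds on both sides. Each $T'_w$ contains $\Gamma_s(k)e_s$ as its labelled-$s$ summand. Since $g$-vectors and labels are preserved, $T_w$ contains $A_ge_s^g$, and that summand keeps label $s$. Lemma~\ref{lem:local-faithful-pi} and Proposition~\ref{prop:faithful-slice} then show that $T_w$ is tilting.

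The map is a bijection onto $\tiltA A_g$, because the source slice $\tiltA\Gamma_s(k)$ is carried onto the target slice. It also preserves the regular module, since the $g$-vector of the regular module is the standard basis on both sides. Hence $T_1=A_g$.

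Proposition~\ref{prop:central-reduction-labels} preserves mutation labels and directions. So the cover relation $T'_{ws_i}\lessdot T'_w$, which is a mutation at label $i$, becomes $T_{ws_i}\lessdot T_w$, again a downward irreducible mutation at label $i$. Covers are exactly irreducible mutations \cite[Corollary~2.34]{AIR}. The parametrization therefore reverses the generation order, with $T_1=A_g$ the maximum.

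\emph{Step 3: the exchange triangle.} A downward (left) mutation in the sense of Aihara--Iyama is defined by a minimal left $\add$-approximation of the removed summand by the remaining summands, as already used in Proposition~\ref{lem:adjacent-gap}. The statement about exchange triangles then follows from \cite[Definitions~2.30, 2.34]{AiharaIyama}.

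\emph{Main obstacle.} The delicate point is the label bookkeeping. I must check that the labels in Iyama--Zhang's model coincide with the vertex-idempotent labels on the nose, not merely up to a relabeling, and that their mutation at $s_i$ corresponds to right multiplication. I will verify this directly on the first step: for $w=1$, mutation at $\Gamma_s(k)e_i$ gives $T'_{s_i}$, and this matches Proposition~\ref{lem:adjacent-gap}. I will then use the uniqueness of $g$-vectors \cite[Theorem~6.5]{DIJ} to propagate the matching along reduced words.
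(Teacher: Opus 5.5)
Your proposal is correct and follows essentially the same route as the paper: transport Iyama--Zhang's labeled model from $\Gamma_s(k)$ to $A_g$ through the common quotient $B_s(k)$ using Proposition~\ref{prop:central-reduction-labels}, restrict to the tilting slices via the last projective (Proposition~\ref{prop:faithful-slice}), reindex by $w\mapsto w^{-1}$, and read off $T_1=A_g$, $T_{w,s}\simeq A_ge_s^g$, and the labeled covers. The differences are only bookkeeping. First, the paper says explicitly that Iyama--Zhang parametrize right modules, and it passes to left modules via the idempotent-preserving isomorphism of Lemma~\ref{lem:self-opposite}; you leave this step implicit. Second, the paper takes the label matching, which you flag as the main obstacle, directly from Iyama--Zhang's labeled statement instead of re-verifying it. Third, it obtains the minimal left approximation from condition (F) together with Aihara--Iyama's Theorem~2.35, rather than directly from their definitions of irreducible left mutation.
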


\begin{proof}
Let $U_v$ denote the vertex indexed by $v$ in the right-module
parametrization of Iyama--Zhang. This parametrization is order reversing,
and mutation with label $i$ is left multiplication by $s_i$
\cite[Proposition~3.10, Theorem~3.18, and Corollary~3.19]{IyamaZhang}.
Let $\Theta$ be the composite label-preserving poset isomorphism from this
right-module tilting slice to $\tiltA A_g$. It is induced by
Lemma~\ref{lem:self-opposite} and the two quotient maps. Indeed, transport
along an algebra isomorphism is exact and preserves factor-module
inclusions, while Proposition~\ref{prop:central-reduction-labels} preserves
the generation order. Choose $T_w$ to represent $\Theta(U_{w^{-1}})$, with
$T_1=A_g$. Then
\[
 \ell(ws_i)=\ell(w)+1
 \Longleftrightarrow
 \ell(s_iw^{-1})=\ell(w^{-1})+1,
 \ \  (ws_i)^{-1}=s_iw^{-1}.
\]
Thus $T_{ws_i}\lessdot T_w$, and the mutation has label $i$. The support
$\tau$-tilting--two-term silting correspondence identifies this cover with
the corresponding two-term silting mutation
\cite[Theorem~2.7(b) and Corollary~2.8]{DIJ}. Moreover,
$\Kb(A_g\text{-}\mathrm{proj})$ satisfies condition \textup{(F)} by
\cite[Proposition~2.20]{AiharaIyama}. Since there is no intermediate
silting object, \cite[Theorem~2.35]{AiharaIyama} shows that this is an
irreducible left mutation defined by a minimal left approximation. Finally,
every vertex of the tilting slice contains the last projective, and all
maps defining $\Theta$ preserve its label. Hence
$T_{w,s}\simeq A_ge_s^g$ for every $w$.
\end{proof}

If a module has finite projective dimension, then we use its minimal
projective resolution when we regard it as an object of
$\Kb(A\text{-}\mathrm{proj})$.

We now combine the two ingredients. Proposition~\ref{lem:labeled-model}
associates each fixed-source mutation with a directed, labeled edge of the
weak-order graph. Proposition~\ref{lem:adjacent-gap} realizes the corresponding
adjacent gap exchange over the relevant opposite endomorphism algebra. We
transport these exchanges along a spanning tree rooted at the regular module.

The following theorem contains Theorem~\ref{thm:intro-fixed} and also
records all tilting vertices and the primitive-idempotent labels.

\begin{theorem}
\label{thm:fixed-primary}
There are basic tilting modules
$T_w=T_{w,1}\oplus\cdots\oplus T_{w,s}$, one for each $w\in\Sigma_s$,
representing all elements of $\tiltA A_g$, and $T_1=A_g$. If
$\ell(ws_i)=\ell(w)+1$, then $T_{ws_i}$ is the downward irreducible
mutation of $T_w$ at the summand with label $i$. Moreover, there are algebra
isomorphisms
\[
 \phi_w:\End_{A_g}(T_w)^{\op}\xrightarrow{\ \sim\ }A_{g\cdot w}
 \qquad(w\in\Sigma_s).
\]
For every $j$, the map $\phi_w$ sends the primitive idempotent attached to
$T_{w,j}$ to $e_j^{g\cdot w}$.
\end{theorem}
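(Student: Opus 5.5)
The modules $T_w$ will be those of Proposition~\ref{lem:labeled-model}. That proposition already gives a complete set of representatives of $\tiltA A_g$, with $T_1=A_g$ and $T_{w,s}\simeq A_ge_s^g$, and it describes the labeled downward mutations. So the only new content is the family of isomorphisms $\phi_w$ and their compatibility with idempotents. I plan to construct $\phi_w$ by induction on $\ell(w)$, moving along a spanning tree of the Hasse diagram of $\Weak(\Sigma_s)$ rooted at $1$. For each $w\neq1$ fix a reduced word and write $w=w's_i$ with $\ell(w)=\ell(w')+1$. For the base case, $\phi_1$ is the canonical isomorphism $\End_{A_g}(A_g)^{\op}\simeq A_g$, which sends the idempotent of $A_ge_j^g$ to $e_j^g$.

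For the inductive step, assume that $\phi_{w'}:\End_{A_g}(T_{w'})^{\op}\to A_{h}$ with $h=g\cdot w'$ has been constructed and sends the idempotent of $T_{w',j}$ to $e_j^h$. Put $E=\End_{A_g}(T_{w'})$. The tilting module $T_{w'}$ induces a derived equivalence
\[
F=\mathbf R\Hom_{A_g}(T_{w'},-):\Db(A_g)\to\Db(E^{\op}),
\]
where $E^{\op}$-modules are viewed as left modules via $\phi_{w'}$, that is, as $A_h$-modules. Under $F$ the summand $T_{w',j}$ goes to $A_he_j^h$. By Proposition~\ref{lem:labeled-model}, $T_w$ is obtained from $T_{w'}$ by the irreducible left mutation at label $i$, and the exchange triangle is given by a minimal left $\add(\bigoplus_{j\ne i}T_{w',j})$-approximation of $T_{w',i}$. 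I intend to apply $F$ to this triangle. Because $F$ is a triangle equivalence, it preserves minimal approximations and Hom-orthogonality. Hence $F(T_w)$ is the irreducible left mutation of the silting object $A_h$ at $A_he_i^h$ in $\Kb(A_h\text{-}\mathrm{proj})$.

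By Proposition~\ref{lem:adjacent-gap} applied to $h$, together with the uniqueness of irreducible mutation \cite[Theorem~2.35]{AiharaIyama}, this object is the module $L_i(h)$, up to isomorphism. The same proposition gives an isomorphism $\End_{A_h}(L_i(h))^{\op}\simeq A_{h\cdot s_i}=A_{g\cdot w}$ that preserves the unchanged labels and gives the new summand label $i$. Composing with the isomorphism $\End_{A_g}(T_w)\simeq\End_{\Db(A_h)}(F(T_w))$ induced by $F$ defines $\phi_w$. Since $F$ sends $T_{w,j}$ to the summand of $L_i(h)$ carrying label $j$, $\phi_w$ sends the idempotent of $T_{w,j}$ to $e_j^{g\cdot w}$.

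The main obstacle is the step matching $F(T_w)$ with $L_i(h)$ summand by summand, with the labels correct. Here $L_i(h)$ is a module, while $F(T_w)$ a priori is only a two-term silting complex in $\Kb(A_h\text{-}\mathrm{proj})$. The first thing to check is that the unchanged summands $T_{w,j}=T_{w',j}$ for $j\ne i$ map to the projectives $A_he_j^h$. After that, the uniqueness of the complement in a left mutation determines the new summand, and the identification is forced. I also need to be careful with the direction conventions, namely that a downward mutation in generation order is a left mutation both before and after applying $F$. This holds because the isomorphism classes of the two objects are determined by the mutation data alone.

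I do not need the endomorphism algebras to be independent of the chosen spanning tree, since $\phi_w$ only has to exist. The claimed mutation relation for an arbitrary cover $w\lessdot ws_i$ is already part of Proposition~\ref{lem:labeled-model}, so no further compatibility check is required.
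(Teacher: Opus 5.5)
Your proposal is correct and follows essentially the same route as the paper's proof. Both argue by induction on $\ell(w)$ along the spanning tree of right-descent edges. Both transport the minimal-approximation exchange triangle through $(\phi_{w'})_*\circ\mathbf R\Hom_{A_g}(T_{w'},-)$, identify the image with $L_i(g\cdot w')$ by uniqueness of minimal left approximations and their cones, and compose with the label-preserving isomorphism from Proposition~\ref{lem:adjacent-gap}.
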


\begin{proof}
For each $v\neq1$, choose a right descent $i(v)$ and put
$p(v):=vs_{i(v)}$, so $\ell(p(v))=\ell(v)-1$. Hence the edges
$p(v)\to v$ form a spanning tree rooted at $1$.
Proposition~\ref{lem:labeled-model} fixes the modules $T_w$. We use this
tree to construct the maps
$\phi_w$ by induction on $\ell(w)$.

At the root, let $\phi_1:\End_{A_g}(A_g)^{\op}\xrightarrow{\sim}A_g$ be
the standard idempotent-preserving isomorphism. Assume that $\phi_w$
has been constructed, and put
\[
 B_w:=\End_{A_g}(T_w)^{\op}, \ \  h:=g\cdot w, \ \ 
 F_w:=(\phi_w)_*\circ\mathbf R\!\Hom_{A_g}(T_w,-).
\]
Here $(\phi_w)_*$ denotes transport of scalars along the algebra isomorphism
$\phi_w$.
Our right-action convention implies
$h\cdot s_i=(g\cdot w)\cdot s_i=g\cdot(ws_i)$.
By \cite[Theorem~6.4]{Rickard}, $F_w$ is a triangle equivalence, and the
label condition on $\phi_w$ implies
$F_w(T_{w,j})\simeq A_he_j^h$ for every $j$.

Consider a tree edge $w\to ws_i$. By
Proposition~\ref{lem:labeled-model}, the exchange triangle
\[
 T_{w,i}\longrightarrow U'\longrightarrow Y_i
 \longrightarrow T_{w,i}[1],
  \ \ U'\in\add\!\left(\bigoplus_{j\neq i}T_{w,j}\right),
\]
is defined by a minimal left approximation and
$T_{ws_i}\simeq(\bigoplus_{j\neq i}T_{w,j})\oplus Y_i$. Since $F_w$ is an
equivalence, it sends this triangle to an exchange triangle
\[
 A_he_i^h\longrightarrow F_w(U')\longrightarrow F_w(Y_i)
 \longrightarrow A_he_i^h[1]
\]
whose first map is a minimal left approximation by the unchanged
projectives. Such approximations, and hence their cones, are unique up to
isomorphism. Thus Proposition~\ref{lem:adjacent-gap}  gives
\(
 F_w(T_{ws_i})\simeq L_i(h)\)
 in \(\Kb(A_h\text{-}\mathrm{proj}).
\)
By full faithfulness and Proposition~\ref{lem:adjacent-gap}, it follows that
\[
 \End_{A_g}(T_{ws_i})^{\op}
 \simeq\End_{A_h}(L_i(h))^{\op}
 \simeq A_{h\cdot s_i}=A_{g\cdot ws_i}.
\]
Both isomorphisms preserve every unchanged label and the new label $i$.
Hence their composite is the required $\phi_{ws_i}$, and induction proves
the theorem.
\end{proof}

We conclude this section with a remark comparing the fixed-source
construction of Theorem~\ref{thm:fixed-primary} with the Li--Xi construction
and explaining how the former is applied blockwise and lifted to nonbasic
matrix centralizers.
\begin{remark}
For every $w\in\Sigma_s$, Theorem~\ref{thm:fixed-primary} realizes
$A_{g\cdot w}$ as the opposite endomorphism algebra of a tilting module over
the fixed algebra $A_g$, whereas the Li--Xi construction gives a chain of
derived equivalences induced by tilting modules over changing source
algebras. The theorem determines the opposite endomorphism algebra but does
not assert path independence of the induced derived equivalences.

The construction also works blockwise. If
$S_n(c,R)_{\mathrm b}\simeq\prod_{f\in\Irr(c)}A_{g_f}$ and
$w=(w_f)_f\in\prod_f\Sigma_{\kappa_c(f)}$, then the direct sum of the
modules from Theorem~\ref{thm:fixed-primary} is a tilting module
$T_w$ over the fixed basic centralizer, with
\[
 \End(T_w)^{\op}\simeq\prod_f A_{g_f\cdot w_f}.
\]
For the original nonbasic centralizer, this construction determines the
opposite endomorphism algebra only up to $R$-linear Morita equivalence.
Section~\ref{sec:one-step} uses Morita lifting to choose the necessary
multiplicities of the tilting summands and thereby realize a prescribed
nonbasic matrix centralizer up to isomorphism, and
Section~\ref{sec:endomorphism-classes} uses the labeled family to determine
which parameters correspond to isomorphic opposite endomorphism algebras and to
describe the quotient mutation graph.
\end{remark}

\section{Global fixed-source realization for matrix centralizers}
\label{sec:one-step}

This section globalizes the fixed-source construction of
Theorem~\ref{thm:fixed-primary} from primary basic blocks to arbitrary
matrix centralizers. Starting from the blockwise realization over a fixed
basic source, Morita lifting allows us to realize any prescribed, possibly
nonbasic, representative of the target Morita class. Together with
Li--Xi's derived-equivalence classification, this proves
Theorem~\ref{thm:intro-one-step}. We then derive its HRS-tilting and
extension-dimension consequences. A second Morita-theoretic lemma, asserting
that matrix centralizers are closed under $R$-linear Morita equivalence, is
used to establish the exhaustion statement in
Corollary~\ref{cor:closure-exhaustion}.

\begin{lemma}\label{lem:Morita-lifting}
Let $X$ be a tilting $A$-module, and put
$C=\End_A(X)^{\op}$. If an algebra $D$ is Morita equivalent to
$C$, then there is $T\in\add X$ such that
$\add T=\add X$ and $\End_A(T)^{\op}\simeq D$. In particular, $T$ is a
tilting $A$-module.
\end{lemma}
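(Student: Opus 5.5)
The plan is to use the standard description of algebras Morita equivalent to a given one via multiplicities of indecomposable projectives, and then to realize those multiplicities inside $\add X$. Write $X\simeq\bigoplus_{j=1}^t X_j^{\,n_j}$ with $X_1,\ldots,X_t$ pairwise nonisomorphic indecomposables and $n_j\geq1$, and put $X_{\mathrm b}=\bigoplus_j X_j$. Then $C=\End_A(X)^{\op}$ is Morita equivalent to $C_{\mathrm b}:=\End_A(X_{\mathrm b})^{\op}$, which is basic. Its indecomposable projective left modules are the $C_{\mathrm b}\varepsilon_j$, where $\varepsilon_j$ is the primitive idempotent attached to $X_j$. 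The functor $\Hom_A(X_{\mathrm b},-)$ identifies $\add X$ with $C_{\mathrm b}$-proj, contravariantly in the left-module convention; equivalently, it identifies $\add X$ with $\mathrm{proj}$-$C_{\mathrm b}^{\op}$ covariantly. The order of composition should be fixed once so that the opposite algebras come out correctly.

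First, since $D$ is Morita equivalent to $C$, it is Morita equivalent to $C_{\mathrm b}$. Hence $D\simeq\End_{C_{\mathrm b}}(P)^{\op}$ for some progenerator $P\simeq\bigoplus_j (C_{\mathrm b}\varepsilon_j)^{m_j}$ with all $m_j\geq1$. This is the classical Morita theorem: $D\simeq\End_{C_{\mathrm b}}(P)^{\op}$ where $P$ is the image of $D$ under an equivalence $D$-mod $\to C_{\mathrm b}$-mod. Second, set $T:=\bigoplus_j X_j^{\,m_j}$. Because every $m_j\geq1$, we get $\add T=\add X$. Third, compute $\End_A(T)^{\op}$. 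Under $\Hom_A(X_{\mathrm b},-)$ or its dual version, $T$ corresponds to a projective module over $C_{\mathrm b}$ or $C_{\mathrm b}^{\op}$ with multiplicities $m_j$ at vertex $j$. By full faithfulness, $\End_A(T)\simeq\End(\bigoplus_j Q_j^{m_j})$, which matches $\End_{C_{\mathrm b}}(P)$ up to the appropriate opposite. One has to check that the two opposites cancel. Concretely, for both $\End_A(T)^{\op}$ and $\End_{C_{\mathrm b}}(P)^{\op}$, the result is the ``generalized matrix algebra'' whose $(j,k)$ block is $\Hom_A(X_j,X_k)$ repeated in an $m_j\times m_k$ pattern, ordered consistently. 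The cleanest route is to write both sides as $\bigl(\varepsilon\, M_m(C_{\mathrm b})\,\varepsilon\bigr)$ for the full idempotent $\varepsilon$ selecting $m_j$ copies of $\varepsilon_j$. Fourth, $T$ is tilting: (T1) and (T2) are inherited from $X$ because $T\in\add X$, and for (T3) the sequence for $X$ has terms in $\add X=\add T$.

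The main obstacle is the bookkeeping of opposite algebras and the direction of the identification between $\add X$ and projectives, together with making the isomorphism $D\simeq \End_A(T)^{\op}$ genuinely $R$-linear. I would handle this by avoiding $\Hom$ functors and arguing directly with idempotents. Both $D$ and $\End_A(T)^{\op}$ are isomorphic to $\varepsilon\,M_N(C_{\mathrm b})\,\varepsilon$ for the same full idempotent $\varepsilon$, determined by the multiplicities $(m_j)$, where $N=\sum_j m_j$. For $D$ this is the standard consequence of Morita theory for basic algebras. For $\End_A(T)^{\op}$ it follows from $\End_A(X_{\mathrm b}^{N})^{\op}\simeq M_N(C_{\mathrm b})$ and from $T$ being the direct summand of $X_{\mathrm b}^N$ cut out by $\varepsilon$.
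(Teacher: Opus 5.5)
Your proposal is correct and is essentially the paper's argument: realize $D$ as $\End_C(P)^{\op}$ for a progenerator $P$, then lift $P$ to some $T\in\add X$ along the equivalence $\add X\simeq C\text{-}\mathrm{proj}$; the only difference is that the paper does this directly for $C$, without passing to $C_{\mathrm b}$ or tracking multiplicities via Krull--Schmidt. One small correction is that $\Hom_A(X,-)$ is covariant into left $C$-modules (the action of $\End_A(X)$ by precomposition is a right action), so full faithfulness gives $\End_A(T)\simeq\End_C(\Hom_A(X,T))$ directly and the opposites cancel automatically; your ``contravariantly'' remark is therefore a harmless slip, and your idempotent computation via $\varepsilon M_N(C_{\mathrm b})\varepsilon$ is also correct.
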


\begin{proof}
Choose a finitely generated projective generator $P\in C\text{-}\mathrm{mod}$
with $\End_C(P)^{\op}\simeq D$. The equivalence
\[
 F:=\Hom_A(X,-):\add X\xrightarrow{\sim}C\text{-}\mathrm{proj}
\]
allows us to choose $T\in\add X$ with $F(T)\simeq P$. Since $P$ is a projective generator,
$$
 \add P=C\text{-}\mathrm{proj}=\add C=\add F(X),
$$
and hence $\add T=\add X$. By full faithfulness,
$$
 \End_A(T)^{\op}\simeq\End_C(F(T))^{\op}
 \simeq\End_C(P)^{\op}\simeq D.
$$
Since $X$ is tilting and $\add T=\add X$, the module $T$ is also tilting.
This proves the assertion.
\end{proof}

The second lemma shows that the class of matrix centralizers is closed under
$R$-linear Morita equivalence.

\begin{lemma}\label{lem:centralizer-Morita-closed}
Let $C$ be a matrix centralizer over $R$. If an $R$-algebra $D$ is
$R$-linearly Morita equivalent to $C$, then $D$ is a matrix centralizer over
$R$.
\end{lemma}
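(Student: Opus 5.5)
Write $C=S_n(c,R)\simeq\End_{R[x]}(V)$ with $V=R^n$ the $R[x]$-module on which $x$ acts as $c$. The plan is to exhibit $D$ directly as the endomorphism ring of another finite-dimensional $R[x]$-module $W$ with $\add W=\add V$. Since every finite-dimensional $R[x]$-module is of the form $R^m$ with $x$ acting by some matrix $d$, this gives $D\simeq\End_{R[x]}(W)=S_m(d,R)$.

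First, by the Morita equivalence $\Hom_{R[x]}(V,-):\add V\xrightarrow{\sim}\add C=C\text{-}\mathrm{proj}$ (projectivization), the finitely generated projective $C$-modules are exactly the modules $\Hom_{R[x]}(V,W)$ with $W\in\add V$. Because $D$ is $R$-linearly Morita equivalent to $C$, there is a finitely generated projective generator $P$ of $C\text{-}\mathrm{mod}$ with $D\simeq\End_C(P)^{\op}$. Second, choose $W\in\add V$ with $\Hom_{R[x]}(V,W)\simeq P$. By full faithfulness,
\[
 D\simeq\End_C(P)^{\op}\simeq\End_C\bigl(\Hom_{R[x]}(V,W)\bigr)^{\op}\simeq\End_{R[x]}(W).
\]
One must check that the variances match. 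Since $C=\End_{R[x]}(V)$ acts on $\Hom(V,W)$ on the right by precomposition, I will follow the paper's convention $S_n(c,R)\simeq\End_{R[x]}(R^n)$ and track the $\op$ consistently, using either $\Hom(V,-)$ or $\Hom(-,V)$ as needed. If necessary, I will also use the isomorphism $C\simeq C^{\op}$, since centralizer algebras over a principal ideal domain are self-opposite blockwise in the manner of Lemma~\ref{lem:self-opposite}, although ideally this detour is avoided. Third, since $W\in\add V$ is a finite-dimensional $R$-vector space with an $R$-linear action of $x$, choosing a basis gives $W\simeq R^m$ with $x$ acting as some $d\in M_m(R)$. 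Hence $\End_{R[x]}(W)=S_m(d,R)$, and this identification is an $R$-algebra isomorphism.

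The main obstacle is the first step: an abstract $R$-linear Morita equivalence must be realized as $D\simeq\End_C(P)^{\op}$ for a projective generator $P$, compatibly with $R$-linearity. I would invoke standard Morita theory, which says that an $R$-linear equivalence $C\text{-}\mathrm{mod}\simeq D\text{-}\mathrm{mod}$ sends $D$ to a progenerator $P$ with $D\simeq\End_C(P)^{\op}$ as $R$-algebras. The only remaining care is the left/right bookkeeping in the projectivization, which I expect to be routine once conventions are fixed.
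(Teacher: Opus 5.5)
Your proposal is correct and is essentially the paper's proof: identify the projective $C$-modules with $\add V$ via $\Hom_{R[x]}(V,-)$, realize the progenerator as $\Hom_{R[x]}(V,W)$ with $W\in\add V$, use full faithfulness to get $D\simeq\End_{R[x]}(W)$, and read off $S_m(d,R)$ from an $R$-basis of $W$. The paper handles the variance issue you flag by working with right $C$-modules, where $D\simeq\End_C(P_C)$ and the covariant functor gives $\End_C(\Hom_{R[x]}(V,W))\simeq\End_{R[x]}(W)$ with no $\op$. Your displayed chain pairs a left progenerator with the right module $\Hom_{R[x]}(V,W)$, so it needs exactly this adjustment. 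In any case a stray $\op$ would be harmless, since transposition gives $S_m(d,R)^{\op}\simeq S_m(d^{\mathsf T},R)$.
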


\begin{proof}
Write $C:=\End_{R[x]}(M)$ for a finite-dimensional $R[x]$-module $M$, and
work with right $C$-modules. Since $D$ is $R$-linearly Morita equivalent to
$C$, there is a finitely generated projective generator $P_C$ such that
$
 D\simeq\End_C(P)
$
as $R$-algebras. Under the standard equivalence
$
 \Hom_{R[x]}(M,-):
 \add M\xrightarrow{\sim}\mathrm{proj}\text{-}C,
$
we have $P\simeq\Hom_{R[x]}(M,N)$ for some $N\in\add M$. Full faithfulness
then gives
$
 D\simeq\End_C(P)
 \simeq\End_C\bigl(\Hom_{R[x]}(M,N)\bigr)
 \simeq\End_{R[x]}(N).
$

Let $\mu_x\in\End_R(N)$ denote multiplication by $x$. Since
$N\in\add M$, it is finite-dimensional over $R$. Put $m=\dim_RN$, choose
an $R$-basis of $N$, and let $c_N\in M_m(R)$ be the matrix of $\mu_x$.
An $R$-linear endomorphism of $N$ is $R[x]$-linear if and only if it
commutes with $\mu_x$. Hence, under the identification
$\End_R(N)\simeq M_m(R)$ induced by the chosen basis,
$$
 \End_{R[x]}(N)
 =\{u\in\End_R(N)\mid u\mu_x=\mu_xu\}
 \simeq S_m(c_N,R).
$$
Consequently, $D\simeq S_m(c_N,R)$ as $R$-algebras, so $D$ is a matrix
centralizer over $R$.
\end{proof}

Combining Theorem~\ref{thm:LiXi-criterion} with a blockwise application of
Theorem~\ref{thm:fixed-primary} to the primary product decompositions
\eqref{eq:basic-centralizer}, and then applying
Lemma~\ref{lem:Morita-lifting}, we obtain the following realization theorem,
including the nonbasic case. This is Theorem~\ref{thm:intro-one-step} from
the introduction.

\begin{theorem}\label{thm:one-step}
Let $A=S_n(c,R)$ and $B=S_m(d,R)$. The following statements are equivalent.
\begin{enumerate}[label={\textup{(\arabic*)}}]
\item $A$ and $B$ are derived equivalent.
\item $c$ and $d$ are $D$-equivalent.
\item There is a tilting left $A$-module $T$ such that
$\End_A(T)^{\op}\simeq B$.
\end{enumerate}
If $B$ is basic, then $T$ can be chosen basic. Likewise, there is a tilting
left $B$-module $U$ such that $\End_B(U)^{\op}\simeq A$; if $A$ is basic,
then $U$ can be chosen basic.
\end{theorem}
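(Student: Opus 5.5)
The plan is to prove the cycle of implications, with only (2)$\Rightarrow$(3) requiring new work. The equivalence (1)$\Leftrightarrow$(2) is Theorem~\ref{thm:LiXi-criterion}. For (3)$\Rightarrow$(1), a tilting module of projective dimension at most one is a tilting complex concentrated in degree zero, so Rickard's theorem \cite[Theorem~6.4]{Rickard} gives $\Db(A)\simeq\Db(\End_A(T)^{\op})\simeq\Db(B)$.

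For (2)$\Rightarrow$(3), I will pass to basic algebras via \eqref{eq:basic-centralizer}. Write
\[
A_{\mathrm b}\simeq\prod_{f\in\Irr(c)}\Lambda_f(P_c(f))
\qquad\text{and}\qquad
B_{\mathrm b}\simeq\prod_{f'\in\Irr(d)}\Lambda_{f'}(P_d(f')).
\]
The $D$-equivalence gives a bijection $\pi$, which I regard as a bijection $\Irr(c)\to\Irr(d)$ through $f\mapsto f^{r_c(f)}$. It satisfies $\cO_c(f)\simeq\cO_d(\pi(f))$ as $R$-algebras, and the multisets of entries of $g_f:=g(P_c(f))$ and $g'_f:=g(P_d(\pi(f)))$ coincide. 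These two words then have the same length $s_f$ and the same sum, namely the Loewy length $r_c(f)=r_d(\pi(f))$. Since the multisets agree, there is $w_f\in\Sigma_{s_f}$ with $g_f\cdot w_f=g'_f$. By Lemma~\ref{lem:coefficient-transport}, $\Lambda_{\pi(f)}(P_d(\pi(f)))\simeq A_{\cO_c(f),g'_f}$, and likewise $\Lambda_f(P_c(f))=A_{\cO_c(f),g_f}$.

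Theorem~\ref{thm:fixed-primary} then supplies a basic tilting module $T_{w_f}$ over $A_{g_f}$ with $\End(T_{w_f})^{\op}\simeq A_{g_f\cdot w_f}$. Tilting modules over a finite product are componentwise, so $X_{\mathrm b}:=\bigoplus_f T_{w_f}$ is a basic tilting $A_{\mathrm b}$-module with $\End(X_{\mathrm b})^{\op}\simeq B_{\mathrm b}$.

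Next I transport $X_{\mathrm b}$ along the Morita equivalence $A_{\mathrm b}\text{-}\mathrm{mod}\simeq A\text{-}\mathrm{mod}$. This equivalence preserves projective dimension, $\Ext^1$, and the coresolution (T3), because $A\in\add$ of the image of $A_{\mathrm b}$. The result is a tilting $A$-module $X$ with $\End_A(X)^{\op}\simeq B_{\mathrm b}$, which is Morita equivalent to $B$. Lemma~\ref{lem:Morita-lifting} then gives $T\in\add X$ with $\End_A(T)^{\op}\simeq B$, which proves (3).

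If $B$ is basic, the projective generator $P$ in that lemma can be taken basic. Then $T$ corresponds under $\Hom_A(X,-)$ to a basic module, and it is basic because $\add X$ is equivalent to $C\text{-}\mathrm{proj}$. The statement about $U$ follows by symmetry: $D$-equivalence is symmetric, being defined by a bijection with symmetric conditions, so I apply the same argument with the roles of $c$ and $d$ exchanged.

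The step needing the most care is the bookkeeping in the blockwise matching, since Li--Xi's data are phrased through $\mathcal M_c$ and the multisets $\mathcal H$. I must check that equal multisets force equal numbers of distinct exponents and equal Loewy lengths, so that both primary blocks are of the form $A_{\cO,h}$ over the same $\cO$. I must also check that Lemma~\ref{lem:coefficient-transport} identifies $\Lambda_{\pi(f)}(P_d(\pi(f)))$ with $A_{\cO_c(f),g'_f}$. Both checks are immediate from the remarks preceding Theorem~\ref{thm:LiXi-criterion}. Everything else is routine transport along Morita equivalences.
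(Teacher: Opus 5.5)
Your proposal is correct and follows essentially the same route as the paper. It uses Li--Xi for (1)$\Leftrightarrow$(2) and Rickard for (3)$\Rightarrow$(1). For the new implication it matches primary factors via Lemma~\ref{lem:coefficient-transport}, applies Theorem~\ref{thm:fixed-primary} blockwise to the basic centralizer, transports along a Morita equivalence, and finishes with Lemma~\ref{lem:Morita-lifting}. The only difference is cosmetic and concerns the basic case: the paper takes $B^{\mathrm b}=B$ and $T=X$, whereas you note that the progenerator in the lifting lemma is then necessarily basic; both work.
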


\begin{proof}
Theorem~\ref{thm:LiXi-criterion} gives
\textup{(1)}$\Longleftrightarrow$\textup{(2)}. If \textup{(3)} holds, then
conditions \textup{(T1)}--\textup{(T3)} imply that a minimal projective
resolution of $T$ is a tilting complex with opposite endomorphism algebra
$B$. Hence \cite[Theorem~6.4]{Rickard} implies \textup{(1)}.

Assume \textup{(1)}. Take $A^{\mathrm b}$ and $B^{\mathrm b}$ to be the
basic centralizers in \eqref{eq:basic-centralizer}, with $B^{\mathrm b}=B$
when $B$ is basic. By Theorem~\ref{thm:LiXi-criterion}, after reindexing the
primary factors and using Lemma~\ref{lem:coefficient-transport}, we may write
\[
 A^{\mathrm b}\simeq\prod_{\lambda=1}^t
 A_{\mathcal O_\lambda,g^\lambda},
 \ \ 
 B^{\mathrm b}\simeq\prod_{\lambda=1}^t
 A_{\mathcal O_\lambda,g^\lambda\cdot w_\lambda}
\]
for suitable $w_\lambda$. Theorem~\ref{thm:fixed-primary} and the finite
product property produce a basic $X^{\mathrm b}\in\tiltA A^{\mathrm b}$ with
\(
 \End_{A^{\mathrm b}}(X^{\mathrm b})^{\op}\simeq B^{\mathrm b}.
\)
Transporting $X^{\mathrm b}$ along an $R$-linear Morita equivalence produces a
basic $X\in\tiltA A$ with $\End_A(X)^{\op}\simeq B^{\mathrm b}$. If $B$ is
basic, set $T=X$. Otherwise Lemma~\ref{lem:Morita-lifting} gives
$T\in\add X$ with $\End_A(T)^{\op}\simeq B$. Thus \textup{(3)} holds.
Interchanging $A$ and $B$ proves the final assertion.
\end{proof}

The following example displays the complete fixed-source correspondence in the
smallest case with a nontrivial braid relation. In particular, it shows
explicitly how matrix centralizers of different sizes arise from tilting
modules over one fixed source.

\begin{example}\label{ex:s3-fixed-source}
Let $J_d(0)$ denote the nilpotent Jordan block of size $d$. For the gap
word $(1,2,3)$, the fixed source is
\[
 A_{(1,2,3)}
 =\End_{R[t]/(t^6)}
 \left(R[t]/(t)\oplus R[t]/(t^3)\oplus R[t]/(t^6)\right)
 \simeq
 S_{10}\!\left(J_1(0)\oplus J_3(0)\oplus J_6(0),R\right).
\]
For every $w\in\Sigma_3$, Theorem~\ref{thm:fixed-primary} identifies
$
 \End_{A_{(1,2,3)}}(T_w)^{\op}
 \simeq A_{(1,2,3)\cdot w}.
$
After transporting scalars along this isomorphism, the associated
fixed-source triangle equivalence is the complete two-step equivalence
\[
 \Db\!\left(A_{(1,2,3)}\right)
 \xrightarrow{\ \mathbf R\!\Hom_{A_{(1,2,3)}}(T_w,-)\ }
 \Db\!\left(\End_{A_{(1,2,3)}}(T_w)^{\op}\right)
 \xrightarrow{\ \sim\ }
 \Db\!\left(A_{(1,2,3)\cdot w}\right).
\]
The six parameters, permuted gap words, successive partial sums, and
target dimensions are as follows.
\begin{center}
\small
\setlength{\tabcolsep}{7pt}
\renewcommand{\arraystretch}{1.12}
\begin{tabular}{c|c|c|c}
$w$ & $(1,2,3)\cdot w$ & successive partial sums
& $R$-dimension of the target\\ \hline
$1$ & $(1,2,3)$ & $(1,3,6)$ & $20$\\
$s_1$ & $(2,1,3)$ & $(2,3,6)$ & $25$\\
$s_2$ & $(1,3,2)$ & $(1,4,6)$ & $23$\\
$s_1s_2$ & $(2,3,1)$ & $(2,5,6)$ & $31$\\
$s_2s_1$ & $(3,1,2)$ & $(3,4,6)$ & $33$\\
$s_1s_2s_1$ & $(3,2,1)$ & $(3,5,6)$ & $36$
\end{tabular}
\end{center}
The corresponding opposite endomorphism algebras are
\[
\begin{aligned}
 \End_{A_{(1,2,3)}}(T_1)^{\op}
 &\simeq S_{10}\!\left(J_1(0)\oplus J_3(0)\oplus J_6(0),R\right),\\
 \End_{A_{(1,2,3)}}(T_{s_1})^{\op}
 &\simeq S_{11}\!\left(J_2(0)\oplus J_3(0)\oplus J_6(0),R\right),\\
 \End_{A_{(1,2,3)}}(T_{s_2})^{\op}
 &\simeq S_{11}\!\left(J_1(0)\oplus J_4(0)\oplus J_6(0),R\right),\\
 \End_{A_{(1,2,3)}}(T_{s_1s_2})^{\op}
 &\simeq S_{13}\!\left(J_2(0)\oplus J_5(0)\oplus J_6(0),R\right),\\
 \End_{A_{(1,2,3)}}(T_{s_2s_1})^{\op}
 &\simeq S_{13}\!\left(J_3(0)\oplus J_4(0)\oplus J_6(0),R\right),\\
 \End_{A_{(1,2,3)}}(T_{s_1s_2s_1})^{\op}
 &\simeq S_{14}\!\left(J_3(0)\oplus J_5(0)\oplus J_6(0),R\right).
\end{aligned}
\]
Here
\[
 \dim_R\End_{R[t]/(t^6)}
 \left(\bigoplus_{i=1}^3 R[t]/(t^{q_i})\right)
 =\sum_{i,j=1}^3\min\{q_i,q_j\},
\]
so the last column also proves that the six target algebras are pairwise
nonisomorphic.
Figure~\ref{fig:s3-fixed-source-correspondence} records both the complete
tilting poset and all six explicit target centralizers together with the
two-step equivalences above. Notice that no path-independence assertion for
the functors is used at the bottom vertex.
\end{example}

\begin{figure}[!htbp]
\centering
\begin{minipage}[c]{\textwidth}
\centering
\begin{tikzpicture}[
  vertex/.style={draw=black!65,fill=tiltingfill,rounded corners,
    minimum width=2.9cm,minimum height=7mm,align=center,font=\small},
  edge/.style={->,>=stealth,semithick},
  elabel/.style={fill=white,inner sep=1pt,font=\scriptsize}
]
\node[vertex] (T1) at (0,4.8) {$T_1=A_{(1,2,3)}$};
\node[vertex] (Ts1) at (-1.85,3.2) {$T_{s_1}$};
\node[vertex] (Ts2) at (1.85,3.2) {$T_{s_2}$};
\node[vertex] (Ts1s2) at (-1.85,1.6) {$T_{s_1s_2}$};
\node[vertex] (Ts2s1) at (1.85,1.6) {$T_{s_2s_1}$};
\node[vertex] (Ts1s2s1) at (0,0) {$T_{s_1s_2s_1}$};
\draw[edge,draw=mutationone] (T1) --
  node[elabel,text=mutationone,above left] {$1$} (Ts1);
\draw[edge,draw=mutationtwo] (T1) --
  node[elabel,text=mutationtwo,above right] {$2$} (Ts2);
\draw[edge,draw=mutationtwo] (Ts1) --
  node[elabel,text=mutationtwo,left] {$2$} (Ts1s2);
\draw[edge,draw=mutationone] (Ts2) --
  node[elabel,text=mutationone,right] {$1$} (Ts2s1);
\draw[edge,draw=mutationone] (Ts1s2) --
  node[elabel,text=mutationone,below left] {$1$} (Ts1s2s1);
\draw[edge,draw=mutationtwo] (Ts2s1) --
  node[elabel,text=mutationtwo,below right] {$2$} (Ts1s2s1);
\end{tikzpicture}

\smallskip
{\small\textup{(a) Complete tilting Hasse diagram}}
\end{minipage}

\bigskip
\bigskip

\begin{minipage}[c]{\textwidth}
\centering
\begin{tikzpicture}[
  scale=.88,transform shape,
  source/.style={draw=mutationone!80!black,fill=sourcefill,double,
    rounded corners,minimum width=3.4cm,minimum height=9mm,
    align=center,font=\small},
  target/.style={draw=mutationthree!75!black,fill=targetfill,rounded corners,
    minimum width=6.6cm,minimum height=11mm,align=center,font=\scriptsize},
  functor/.style={->,>=stealth,semithick,draw=mutationthree!80!black}
]
\node[source] (fixedSource) at (0,0)
  {$\Db\!\left(A_{(1,2,3)}\right)$};
\node[target] (targetIdentity) at (0,3.3)
  {$T_1=A_{(1,2,3)}$\\[-1pt]
   $\Db\!\left(S_{10}\!\left(J_1(0)\oplus J_3(0)\oplus J_6(0),R\right)\right)$};
\node[target] (targetSOne) at (-5.15,1.55)
  {$T_{s_1}$\\[-1pt]
   $\Db\!\left(S_{11}\!\left(J_2(0)\oplus J_3(0)\oplus J_6(0),R\right)\right)$};
\node[target] (targetSTwo) at (5.15,1.55)
  {$T_{s_2}$\\[-1pt]
   $\Db\!\left(S_{11}\!\left(J_1(0)\oplus J_4(0)\oplus J_6(0),R\right)\right)$};
\node[target] (targetSOneSTwo) at (-5.15,-1.55)
  {$T_{s_1s_2}$\\[-1pt]
   $\Db\!\left(S_{13}\!\left(J_2(0)\oplus J_5(0)\oplus J_6(0),R\right)\right)$};
\node[target] (targetSTwoSOne) at (5.15,-1.55)
  {$T_{s_2s_1}$\\[-1pt]
   $\Db\!\left(S_{13}\!\left(J_3(0)\oplus J_4(0)\oplus J_6(0),R\right)\right)$};
\node[target] (targetSOneSTwoSOne) at (0,-3.3)
  {$T_{s_1s_2s_1}$\\[-1pt]
   $\Db\!\left(S_{14}\!\left(J_3(0)\oplus J_5(0)\oplus J_6(0),R\right)\right)$};
\draw[functor,draw=mutationone] (fixedSource) -- (targetIdentity);
\draw[functor] (fixedSource) -- (targetSOne);
\draw[functor] (fixedSource) -- (targetSTwo);
\draw[functor] (fixedSource) -- (targetSOneSTwo);
\draw[functor] (fixedSource) -- (targetSTwoSOne);
\draw[functor] (fixedSource) -- (targetSOneSTwoSOne);
\end{tikzpicture}

\smallskip
{\small\textup{(b) Fixed-source derived-equivalence correspondence}}
\end{minipage}
\caption{The complete fixed-source realization for the gap word $(1,2,3)$ in
Example~\ref{ex:s3-fixed-source}. In \textup{(a)}, arrows are downward
irreducible mutations and their labels are the exchanged positions. In
\textup{(b)}, $\Db\!\left(A_{(1,2,3)}\right)$ is the central node. Each
outer node records the inducing tilting module and the derived category of
the corresponding target matrix centralizer; its incoming arrow represents
the complete two-step equivalence displayed in the example. Colors
distinguish the mutation labels in \textup{(a)} and separate the fixed
source from its targets in \textup{(b)}.}
\label{fig:s3-fixed-source-correspondence}
\end{figure}

We give two consequences of the fixed-source realization in
Theorem~\ref{thm:one-step}. Both follow from standard tilting theory. The
first combines this realization with the Brenner--Butler theorem and the
HRS tilting correspondence \cite{BrennerButler,HRS}.

For a tilting left $A$-module $T$, we set
\[
 \mathcal T_T:=\Fac T
 =\{M\in A\text{-}\mathrm{mod}\mid \Ext_A^1(T,M)=0\},
 \ \ 
 \mathcal F_T
 :=\{N\in A\text{-}\mathrm{mod}\mid \Hom_A(T,N)=0\}.
\]
The Brenner--Butler theorem \cite{BrennerButler} shows that
$(\mathcal T_T,\mathcal F_T)$ is the torsion pair induced by $T$. Its HRS
tilt is the heart
$$
 \mathcal H(\mathcal T_T,\mathcal F_T)
 =\langle\mathcal F_T[1],\mathcal T_T\rangle_{\mathrm{ex}}
$$
of $\Db(A)$; see Definition~\ref{def:torsion-pair-hrs}.

\begin{corollary}
\label{cor:one-step-hrs}
Let $A$ and $B$ be derived-equivalent matrix centralizers. Then there is a
tilting left $A$-module $T$ such that $B\simeq\End_A(T)^{\op}$ and
\[
 \Phi:=\mathbf R\!\Hom_A(T,-):\Db(A)\xrightarrow{\sim}\Db(B)
\]
restricts to an exact equivalence
\[
 \mathcal H(\mathcal T_T,\mathcal F_T)
 \xrightarrow{\sim}B\text{-}\mathrm{mod}.
\]
Consequently, for every matrix centralizer $B$ derived equivalent to $A$,
there exists a torsion pair in $A\text{-}\mathrm{mod}$ whose HRS tilt is
equivalent to $B\text{-}\mathrm{mod}$.
\end{corollary}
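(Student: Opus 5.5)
The plan is to take $T$ directly from Theorem~\ref{thm:one-step}, and then invoke the standard tilting-theoretic identification of $\mathbf R\!\Hom_A(T,-)$ on the HRS heart. First, since $A$ and $B$ are derived equivalent matrix centralizers, the implication \textup{(1)}$\Rightarrow$\textup{(3)} of Theorem~\ref{thm:one-step} provides a tilting left $A$-module $T$ with $B\simeq\End_A(T)^{\op}$. We fix such an isomorphism and use it to regard $\Phi$ as landing in $\Db(B)$. By \textup{(T1)}--\textup{(T3)}, the minimal projective resolution of $T$ is a tilting complex, so $\Phi$ is a triangle equivalence by \cite[Theorem~6.4]{Rickard}. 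This is the same argument as in the proof of Theorem~\ref{thm:one-step}.

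Next, I would determine where $\Phi$ sends the two halves of the torsion pair. Since $\pd_AT\le1$, for every $M\in A\text{-}\mathrm{mod}$ the cohomology of $\Phi(M)$ is concentrated in degrees $0$ and $1$. These cohomologies are $\Hom_A(T,M)$ and $\Ext^1_A(T,M)$. Hence, for $M\in\mathcal T_T$ we have $\Ext^1_A(T,M)=0$, so $\Phi(M)\simeq\Hom_A(T,M)$ is a $B$-module. For $N\in\mathcal F_T$ we have $\Hom_A(T,N)=0$, so $\Phi(N)\simeq\Ext^1_A(T,N)[-1]$, and therefore $\Phi(N[1])$ is again a $B$-module. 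Here the equality $\Fac T=\{M\mid\Ext^1_A(T,M)=0\}$ and the fact that $(\mathcal T_T,\mathcal F_T)$ is a torsion pair come from the Brenner--Butler theorem \cite{BrennerButler}. It follows that $\Phi$ sends $\mathcal F_T[1]$ and $\mathcal T_T$ into $B\text{-}\mathrm{mod}\subseteq\Db(B)$. Since $B\text{-}\mathrm{mod}$ is extension closed in $\Db(B)$, as the heart of the standard t-structure, $\Phi$ maps the extension closure $\mathcal H(\mathcal T_T,\mathcal F_T)=\langle\mathcal F_T[1],\mathcal T_T\rangle_{\mathrm{ex}}$ into $B\text{-}\mathrm{mod}$.

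For essential surjectivity, I would use the other half of Brenner--Butler. The pair $(\mathcal X,\mathcal Y)$ with $\mathcal X=\ker(T\otimes_B-)$ and $\mathcal Y=\ker\Tor_1^B(T,-)$ is a torsion pair in $B\text{-}\mathrm{mod}$. Moreover, $\Hom_A(T,-)$ gives $\mathcal T_T\simeq\mathcal Y$ and $\Ext^1_A(T,-)$ gives $\mathcal F_T\simeq\mathcal X$. Consequently, every $B$-module $Y$ sits in a short exact sequence $0\to X'\to Y\to Y''\to0$ whose end terms lie in the images of $\mathcal T_T$ and $\mathcal F_T[1]$. We then pull back the corresponding triangle along the equivalence $\Phi^{-1}$. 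Its outer terms lie in the heart, and the heart is extension closed, so $\Phi^{-1}(Y)$ lies in $\mathcal H(\mathcal T_T,\mathcal F_T)$. Full faithfulness is inherited from $\Phi$. Exactness holds because a triangle equivalence that restricts to an equivalence between hearts of bounded t-structures sends short exact sequences, which are triangles with all three terms in the heart, to short exact sequences. This gives the asserted restriction. The final sentence of the corollary is then immediate, using the torsion pair $(\mathcal T_T,\mathcal F_T)$.

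I expect no real obstacle here, because all the content lies in Theorem~\ref{thm:one-step}. The only point needing care is bookkeeping: the shift convention, namely that $\mathcal F_T$ must be placed in degree $-1$ so that its image $\Ext^1_A(T,N)$ lands in degree $0$, and the transport of scalars along the chosen isomorphism $B\simeq\End_A(T)^{\op}$.
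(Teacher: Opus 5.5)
Your proposal is correct and follows essentially the same route as the paper. It takes $T$ from Theorem~\ref{thm:one-step} and uses Rickard for $\Phi$, together with the Brenner--Butler torsion pairs and equivalences, $\pd_AT\leq1$ to identify $\Phi$ on $\mathcal T_T$ and $\mathcal F_T[1]$, and the quasi-inverse of $\Phi$ on the canonical torsion sequences in $B\text{-}\mathrm{mod}$ for essential surjectivity. Your remarks on exactness and on transporting scalars along $B\simeq\End_A(T)^{\op}$ fill in details the paper leaves implicit.
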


\begin{proof}
Choose $T$ as in Theorem~\ref{thm:one-step}. By
\cite[Theorem~6.4]{Rickard}, the functor $\Phi$ in the statement is a derived
equivalence.
Since $B\simeq\End_A(T)^{\op}$, the natural endomorphism action makes $T$
an $A$--$B$-bimodule. Set
\[
 \mathcal X_T
 =\{X\in B\text{-}\mathrm{mod}\mid T\otimes_BX=0\},
  \ \ 
 \mathcal Y_T
 =\{Y\in B\text{-}\mathrm{mod}\mid \Tor_1^B(T,Y)=0\}.
\]
By the Brenner--Butler theorem,
$(\mathcal T_T,\mathcal F_T)$ and $(\mathcal X_T,\mathcal Y_T)$ are torsion
pairs, and
\[
 \Hom_A(T,-):\mathcal T_T\xrightarrow{\sim}\mathcal Y_T,
  \  \
 \Ext_A^1(T,-):\mathcal F_T\xrightarrow{\sim}\mathcal X_T.
\]
The quasi-inverse functors are $T\otimes_B-$ and $\Tor_1^B(T,-)$,
respectively. Since $\pd_AT\leq1$, it follows that
\[
 \Phi(M)\simeq\Hom_A(T,M)\quad(M\in\mathcal T_T),
  \ \ 
 \Phi(N[1])\simeq\Ext_A^1(T,N)\quad(N\in\mathcal F_T).
\]
Thus $\Phi$ sends
$\langle\mathcal F_T[1],\mathcal T_T\rangle_{\mathrm{ex}}$ into
$B\text{-}\mathrm{mod}$. Conversely, every $B$-module is an extension of
an object of $\mathcal Y_T$ by an object of $\mathcal X_T$. Applying a
quasi-inverse of $\Phi$ therefore places it in
$\langle\mathcal F_T[1],\mathcal T_T\rangle_{\mathrm{ex}}$. Hence $\Phi$
restricts to the asserted equivalence.
\end{proof}

We next recall the notation needed for the second consequence. For an algebra
$C$, the number $\extdim(C)$ is the extension dimension of
$C\text{-}\mathrm{mod}$ defined in Definition~\ref{def:extension-dimension};
see \cite[Definition~2.3]{ZhangZhengExt}. If $P^\bullet$ is a bounded radical
complex of finitely generated projective modules, that is,
$\operatorname{Im}d^i\subseteq\rad P^{i+1}$ for every differential $d^i$,
set
\[
 \operatorname{len}(P^\bullet)
 :=\sup\{i\mid P^i\neq0\}-\inf\{i\mid P^i\neq0\}+1.
\]
For an arbitrary bounded complex of projectives, its length is that of its
unique radical representative; see
\cite[p.~18 and Lemma~2.1]{ZhangZhengExt}. We use
$\operatorname{len}$ here to distinguish this notion from the Coxeter length
$\ell$.

\begin{corollary}
\label{cor:extension-dimension-bound}
Let $A$ and $B$ be derived-equivalent matrix centralizers. Then one has
\[
 \bigl|\extdim(A)-\extdim(B)\bigr|\leq1.
\]
\end{corollary}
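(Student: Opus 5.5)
By Theorem~\ref{thm:one-step}, there is a tilting left $A$-module $T$ with $B\simeq\End_A(T)^{\op}$. The functor $\Phi=\mathbf R\!\Hom_A(T,-)$ is therefore a derived equivalence, and by Rickard's theorem it is given by a tilting complex that is a two-term complex of projectives, namely the minimal projective resolution of $T$. The plan is to invoke the result of Zhang--Zheng \cite{ZhangZhengExt} bounding the change of extension dimension under a derived equivalence by the length of the associated tilting complex. Concretely, if $\Db(A)\simeq\Db(B)$ is induced by a tilting complex of length $\operatorname{len}$, then $\extdim$ changes by at most $\operatorname{len}-1$. Here $\pd_AT\le1$, so the radical representative of $T$ has $\operatorname{len}\le 2$, and the bound gives $\extdim(B)\le\extdim(A)+1$.

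For the reverse inequality we do not use a quasi-inverse complex, which might have larger length. Instead we apply the final assertion of Theorem~\ref{thm:one-step}: there is a tilting left $B$-module $U$ with $A\simeq\End_B(U)^{\op}$. Since $\pd_BU\le1$, the same argument gives $\extdim(A)\le\extdim(B)+1$. Combining the two inequalities yields $|\extdim(A)-\extdim(B)|\le1$.

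If the precise Zhang--Zheng statement is phrased differently, we fall back on a direct argument for the tilting-module case. Let $G$ be a generator with $A\text{-}\mathrm{mod}=[G]_{n+1}$. Each $B$-module $Y$ fits into a sequence $0\to Y_{\mathcal Y}\to Y\to Y_{\mathcal X}\to0$, where $Y_{\mathcal Y}=\Hom_A(T,M)$ for some $M\in\mathcal T_T$ and $Y_{\mathcal X}=\Ext^1_A(T,N)$ for some $N\in\mathcal F_T$, as in the proof of Corollary~\ref{cor:one-step-hrs}. Applying $\Hom_A(T,-)$ and $\Ext_A^1(T,-)$ to the layers of $M$ and $N$ in $[G]_{n+1}$, and using that these functors are half-exact with $\Ext^2=0$, places $Y$ in $[\Hom_A(T,G)\oplus\Ext^1_A(T,G)]_{n+2}$.

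The main obstacle is this layer-counting step. The issue is that the functors are not exact on the filtrations, so we must check that the connecting maps only add one extra layer overall, not one per step. This is exactly what the cited length bound handles, so we would rely on \cite{ZhangZhengExt} there.
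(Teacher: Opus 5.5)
Your proof is correct and follows the paper's argument: both cite Zhang--Zheng's Theorem~3.4 for the minimal projective resolution of $T$, which is a two-term radical complex of length at most $2$ because $\pd_AT\leq1$. The paper reads the two-sided bound off a single application, whereas your second application to the tilting $B$-module $U$ makes the argument independent of which way the cited inequality is oriented. That extra step is harmless but not forced: the complex for the quasi-inverse direction, $\mathbf R\!\Hom_A(T,A)\simeq\bigl(\Hom_A(T,T_0)\to\Hom_A(T,T_1)\bigr)$ obtained from the sequence in (T3), is also two-term. Your fallback sketch is, as you note yourself, not a proof on its own, but the main argument does not need it.
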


\begin{proof}
Choose $T$ as in Theorem~\ref{thm:one-step}, and let $P^\bullet(T)$ be its
deleted minimal projective resolution. It is the tilting complex representing
the image of $B$ under the quasi-inverse of
$\mathbf R\!\Hom_A(T,-)$; see \cite[Theorem~6.4]{Rickard}. Since
$\pd_AT\leq1$, the complex $P^\bullet(T)$ has nonzero terms in at most two
consecutive degrees. Hence $\operatorname{len}(P^\bullet(T))\leq2$.
Applying \cite[Theorem~3.4]{ZhangZhengExt} to this quasi-inverse equivalence
gives
$
 \bigl|\extdim(A)-\extdim(B)\bigr|
 \leq\operatorname{len}(P^\bullet(T))-1\leq1.
$
\end{proof}

\begin{example}
\label{ex:LiXi-extension-dimension}
Assume that $R$ is algebraically closed, and let $J_r(0)$ denote the
nilpotent Jordan block of size $r$. Set
\[
 c=J_5(0)\oplus J_4(0)\oplus J_1(0)\in M_{10}(R),
 \ \ 
 d=J_5(0)\oplus J_2(0)\oplus J_1(0)\in M_8(R).
\]
Then
\[
 \extdim\bigl(S_{10}(c,R)\bigr)=0
 \text{ and }
 \extdim\bigl(S_8(d,R)\bigr)=1.
\]
In particular, the bound in
Corollary~\ref{cor:extension-dimension-bound} is sharp.
\end{example}

\begin{proof}
By \cite[Example~5.3]{LiXiD}, $S_{10}(c,R)$ and
$S_8(d,R)$ are derived equivalent, with the first algebra
representation-finite and the second representation-infinite. Hence
Definition~\ref{def:extension-dimension} gives
$\extdim\bigl(S_{10}(c,R)\bigr)=0$ and
$\extdim\bigl(S_8(d,R)\bigr)\neq0$. Corollary~\ref{cor:extension-dimension-bound}
also gives $\extdim\bigl(S_8(d,R)\bigr)\leq1$, and hence
$\extdim\bigl(S_8(d,R)\bigr)=1$.
\end{proof}

Theorem~\ref{thm:one-step} realizes, up to isomorphism, every prescribed
matrix-centralizer target in the derived equivalence class of $A$, including
the nonbasic case. When the target is basic, it is obtained by the blockwise
construction of Theorem~\ref{thm:fixed-primary}, followed, when necessary, by
Morita transport to the given source algebra $A$. We now determine all
opposite endomorphism algebras that arise from tilting modules
over a fixed matrix centralizer.

\begin{corollary}
\label{cor:closure-exhaustion}
Fix a matrix centralizer algebra $A$. Then
\[
 \bigl\{[\End_A(T)^{\op}]\ \bigm|\
 T\text{ is a tilting }A\text{-module}\bigr\}
 =
 \bigl\{[B]\ \bigm|\
 \substack{B\text{ is a matrix centralizer algebra}\\
 B\text{ is derived equivalent to }A}\bigr\}.
\]
Here brackets denote isomorphism classes of $R$-algebras.
\end{corollary}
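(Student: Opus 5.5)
The plan is to prove the two inclusions separately. The inclusion ``$\supseteq$'' is exactly the implication (1)$\Rightarrow$(3) of Theorem~\ref{thm:one-step}. If $B$ is a matrix centralizer derived equivalent to $A$, then that theorem gives a tilting $A$-module $T$ with $\End_A(T)^{\op}\simeq B$. Hence $[B]$ lies in the left-hand side.

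For ``$\subseteq$'', I would fix a tilting $A$-module $T$ and put $C=\End_A(T)^{\op}$. By Rickard's theorem \cite[Theorem~6.4]{Rickard}, $C$ is derived equivalent to $A$, so the only remaining task is to show that $C$ is a matrix centralizer. The steps are as follows.

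First, let $T^{\mathrm b}$ be the basic module with $\add T^{\mathrm b}=\add T$. Then $C$ is Morita equivalent to $C^{\mathrm b}:=\End_A(T^{\mathrm b})^{\op}$, via the projective generator $\Hom_A(T^{\mathrm b},T)$, or equivalently by the argument of Lemma~\ref{lem:Morita-lifting}. By Lemma~\ref{lem:centralizer-Morita-closed}, it therefore suffices to show that $C^{\mathrm b}$ is Morita equivalent to some matrix centralizer.

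Second, transport $T^{\mathrm b}$ along an $R$-linear Morita equivalence between $A$ and its basic form $A^{\mathrm b}\simeq\prod_f A_{\mathcal O_f,g_f}$ from \eqref{eq:basic-centralizer}. This transport preserves tilting modules and, up to Morita equivalence, their opposite endomorphism algebras. The resulting basic tilting module decomposes blockwise along the product, because tilting posets commute with finite products.

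Third, Theorem~\ref{thm:fixed-primary} says that the family $\{T_w\}$ represents every element of $\tiltA A_{g_f}$. So each block component is isomorphic to some $T_{w_f}$, and its opposite endomorphism algebra is isomorphic to $A_{\mathcal O_f,g_f\cdot w_f}$.

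Finally, each $A_{\mathcal O_f,g_f\cdot w_f}$ is an endomorphism algebra $\End_{R[x]}(\bigoplus_i R[x]/(f^{q_i}))$, with $\mathcal O_f=R[x]/(f^{r})$ as in the proof of Theorem~\ref{thm:intro-poset}. The product of these over $f$ is $\End_{R[x]}$ of a direct sum, since the primary components have no Hom between them. By the argument of Lemma~\ref{lem:centralizer-Morita-closed}, this is a matrix centralizer $S_m(d,R)$. Another application of that lemma then makes $C$ itself a matrix centralizer.

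The main obstacle I expect is the bookkeeping in the second step. One must check that an $R$-linear Morita equivalence $A\text{-}\mathrm{mod}\simeq A^{\mathrm b}\text{-}\mathrm{mod}$ sends $T^{\mathrm b}$ to a basic tilting module with an isomorphic (not merely Morita equivalent) endomorphism algebra. This holds because an equivalence of categories is fully faithful and preserves $\pd$, $\Ext^1$ and the coresolution (T3). A second point to keep straight is that the identification of the coefficient algebra $\mathcal O_f$ with a quotient $R[x]/(f^{r})$ is the one fixed by \eqref{eq:basic-centralizer}. Lemma~\ref{lem:coefficient-transport} is not needed here, because the coefficient algebra is literally of that form.
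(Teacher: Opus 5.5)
Your proposal is correct and follows essentially the same route as the paper. The reverse inclusion comes from Theorem~\ref{thm:one-step}; for the forward inclusion you pass to a basic $T^{\mathrm b}$, transport it by Morita equivalence to $A^{\mathrm b}$, identify each block with some $T_{w_\lambda}$, use coprimality of distinct primary parts to realize $\prod_\lambda A_{\mathcal O_\lambda,g^\lambda\cdot w_\lambda}$ as $\End_{R[x]}$ of a direct sum, and finish with Lemma~\ref{lem:centralizer-Morita-closed} and Rickard's theorem. The differences are cosmetic: the paper cites \cite[Lemma~2.2]{LiXiD} where you use the projective generator $\Hom_A(T^{\mathrm b},T)$, and Proposition~\ref{lem:labeled-model} where you cite Theorem~\ref{thm:fixed-primary} for exhaustiveness of $\{T_w\}$.
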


\begin{proof}
Write $A=S_n(c,R)$. Let $T$ be tilting, and choose a basic summand
$T^{\mathrm b}$ with
$\add T^{\mathrm b}=\add T$. Let $Y$ be its image under an $R$-linear
Morita equivalence to the basic centralizer
\[
 A^{\mathrm b}\simeq\prod_{\lambda=1}^t
 A_{\mathcal O_\lambda,g^\lambda}.
\]
Choose this decomposition so that
$\mathcal O_\lambda=R[x]/(f_\lambda^{r_\lambda})$ for distinct
$f_1,\ldots,f_t\in\Irr(c)$, and write
$g^\lambda=(g_1^\lambda,\ldots,g_{s_\lambda}^\lambda)$.
By the finite-product decomposition and Proposition~\ref{lem:labeled-model}, the
component of $Y$ indexed by $\lambda$ is isomorphic to some $T_{w_\lambda}$.
Hence Theorem~\ref{thm:fixed-primary} gives
\[
 \End_A(T^{\mathrm b})^{\op}
 \simeq\End_{A^{\mathrm b}}(Y)^{\op}
 \simeq\prod_{\lambda=1}^t
 A_{\mathcal O_\lambda,g^\lambda\cdot w_\lambda}.
\]
For $1\leq i\leq s_\lambda$, set
\[
 q_{\lambda,i}:=
 \sum_{j=1}^i(g^\lambda\cdot w_\lambda)_j,
  \ \
 M_\lambda:=
 \bigoplus_{i=1}^{s_\lambda}R[x]/(f_\lambda^{q_{\lambda,i}}).
\]
Then
$A_{\mathcal O_\lambda,g^\lambda\cdot w_\lambda}
\simeq\End_{R[x]}(M_\lambda)$. Since powers of distinct $f_\lambda$ are
coprime, B\'ezout gives
\[
 \Hom_{R[x]}(M_\lambda,M_\mu)=0\quad(\lambda\neq\mu),
  \ \
 \prod_\lambda\End_{R[x]}(M_\lambda)
 \simeq\End_{R[x]}\!\left(\bigoplus_\lambda M_\lambda\right).
\]
Thus $\End_A(T^{\mathrm b})^{\op}$ is a matrix centralizer. Since
$\add T=\add T^{\mathrm b}$, it follows from
\cite[Lemma~2.2]{LiXiD} that $\End_A(T)^{\op}$ and
$\End_A(T^{\mathrm b})^{\op}$ are $R$-linearly Morita equivalent.
Lemma~\ref{lem:centralizer-Morita-closed} therefore shows
that $\End_A(T)^{\op}$ is a matrix centralizer, while
\cite[Theorem~6.4]{Rickard} implies
$\End_A(T)^{\op}\overset{\mathrm D}{\sim}A$. Hence the set on the left
is contained in the set on the right.

Conversely, if $B$ is a matrix centralizer with
$B\overset{\mathrm D}{\sim}A$, Theorem~\ref{thm:one-step} gives a tilting
left $A$-module $T$ with $\End_A(T)^{\op}\simeq B$.
\end{proof}

We end this section with the following remark about the scope of
Theorem~\ref{thm:one-step} and Corollary~\ref{cor:closure-exhaustion}.

\begin{remark}\label{rem:not-every-functor}
Theorem~\ref{thm:one-step} and Corollary~\ref{cor:closure-exhaustion} concern
only matrix centralizers in the derived equivalence class of $A$. Neither
result asserts
that an arbitrary finite-dimensional algebra derived
equivalent to $A$ is of the form $\End_A(T)^{\op}$ for a tilting
module, nor that every prescribed triangle equivalence is isomorphic to
$\mathbf R\!\Hom_A(T,-)$ for such a module.
\end{remark}

\section{Fixed-source uniqueness and Schreier multigraphs}
\label{sec:endomorphism-classes}

Theorem~\ref{thm:one-step} settles the global existence problem. We now
turn to uniqueness for a fixed primary source. Fix the algebra $A_g$ and
the family $\{T_w\}_{w\in\Sigma_s}$ constructed in
Theorem~\ref{thm:fixed-primary}.

For each distinct value $a$ occurring among the entries of $g$, put
$$
 I_a:=\{j\in\{1,\ldots,s\}\mid g_j=a\},
 \ \
 m_a:=|I_a|,
$$
and let
$$
 H_g:=\{h\in\Sigma_s\mid g\cdot h=g\}
 =\prod_a\operatorname{Sym}(I_a)
 \simeq\prod_a\Sigma_{m_a}.
$$
Thus $H_g$ is the stabilizer of the gap word $g$ and independently
permutes the positions carrying each fixed gap value.

Let $\mathcal E_g$ be the set of isomorphism classes of opposite
endomorphism algebras of basic tilting $A_g$-modules, and define
$$
 \Psi_g:\Sigma_s\longrightarrow\mathcal E_g,
 \ \
 \Psi_g(w):=[\End_{A_g}(T_w)^{\op}].
$$
We first determine the fibers of $\Psi_g$. We then identify the resulting
quotient of the mutation graph in Proposition~\ref{prop:Schreier} and
characterize in Theorem~\ref{thm:orientation-descent} when the right
weak-order orientation descends to this quotient. Together, these results
prove Theorem~\ref{thm:intro-endomorphism-classes}.

\begin{theorem}\label{thm:endomorphism-classes}
For $u,v\in\Sigma_s$, the following statements are equivalent.
\begin{enumerate}[label={\textup{(\arabic*)}}]
\item $\End_{A_g}(T_u)^{\op}\simeq\End_{A_g}(T_v)^{\op}$ as
$R$-algebras.
\item $\End_{A_g}(T_u)^{\op}$ and $\End_{A_g}(T_v)^{\op}$ are
$R$-linearly Morita equivalent.
\item $g\cdot u=g\cdot v$.
\item $uv^{-1}\in H_g$.
\item $H_gu=H_gv$.
\end{enumerate}
Each isomorphism class in the image of $\Psi_g$ is represented by
$|H_g|=\prod_a m_a!$ parameters. Hence there are exactly
$s!/\prod_a m_a!$ isomorphism classes.
\end{theorem}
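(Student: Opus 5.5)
The plan is to close a cycle of implications, working inside the family $\{T_w\}$ and the isomorphisms $\phi_w$ from Theorem~\ref{thm:fixed-primary}. The steps, in order, are:
\begin{itemize}
\item the group-theoretic equivalences \textup{(3)}$\Leftrightarrow$\textup{(4)}$\Leftrightarrow$\textup{(5)};
\item the easy implications \textup{(3)}$\Rightarrow$\textup{(1)}$\Rightarrow$\textup{(2)};
\item the substantive implication \textup{(2)}$\Rightarrow$\textup{(3)};
\item the count of parameters and classes.
\end{itemize}

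For \textup{(3)}$\Leftrightarrow$\textup{(4)}$\Leftrightarrow$\textup{(5)}, I use the right action $(g\cdot u)\cdot v=g\cdot(uv)$. If $g\cdot u=g\cdot v$, acting by $v^{-1}$ gives $g\cdot(uv^{-1})=g$, that is, $uv^{-1}\in H_g$. The converse is the same computation read backwards. Finally, $uv^{-1}\in H_g$ is the definition of $H_gu=H_gv$.

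For \textup{(3)}$\Rightarrow$\textup{(1)}, Theorem~\ref{thm:fixed-primary} gives $\End_{A_g}(T_u)^{\op}\simeq A_{g\cdot u}=A_{g\cdot v}\simeq\End_{A_g}(T_v)^{\op}$. The implication \textup{(1)}$\Rightarrow$\textup{(2)} is trivial.

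The main step is \textup{(2)}$\Rightarrow$\textup{(3)}. Put $h=g\cdot u$ and $h'=g\cdot v$. Both $A_h$ and $A_{h'}$ are basic, since their summands $M(q_i(h))$ are pairwise nonisomorphic indecomposables with local endomorphism rings. Two basic algebras that are $R$-linearly Morita equivalent are isomorphic, so \textup{(2)} gives an $R$-algebra isomorphism $\theta:A_h\to A_{h'}$. Such a $\theta$ sends a complete set of primitive orthogonal idempotents to another complete set. Any two complete sets of primitive orthogonal idempotents are conjugate up to a permutation. Therefore the multiset $\ms{\dim_R e A_h e}$, with $e$ running over a complete set of primitive idempotents, is an isomorphism invariant.

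I then compute this invariant. For the idempotent $e_j^h$,
\[
e_j^hA_he_j^h\simeq\End_{\cO}(M(q_j(h)))\simeq\cO/(\pi^{q_j(h)}),
\]
whose $R$-dimension is $q_j(h)\dim_Rk$. Since $h$ and $h'$ use the same $\cO$ and hence the same $k$, the sets of partial sums $\{q_j(h)\}$ and $\{q_j(h')\}$ coincide. Both sequences of partial sums are strictly increasing, so they agree termwise. Taking successive differences gives $h=h'$.

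The obstacle I expect is precisely that an abstract isomorphism need not preserve the labels of the idempotents. This is why I extract only the unordered dimension data and then rely on the strict monotonicity of partial sums to recover the order. I will also check that the Morita-to-isomorphism reduction is valid $R$-linearly over an arbitrary field.

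For the count, \textup{(1)}$\Leftrightarrow$\textup{(5)} shows that the fibers of $\Psi_g$ are exactly the right cosets $H_gw$. Each coset has $|H_g|=\prod_a|\operatorname{Sym}(I_a)|=\prod_a m_a!$ elements. Hence the number of isomorphism classes is $s!/\prod_a m_a!$.
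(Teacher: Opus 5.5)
Your proof is correct. Its overall shape matches the paper's:
\begin{itemize}
\item the chain \textup{(3)}$\Rightarrow$\textup{(1)}$\Rightarrow$\textup{(2)} via Theorem~\ref{thm:fixed-primary};
\item the same group-theoretic equivalences \textup{(3)}$\Leftrightarrow$\textup{(4)}$\Leftrightarrow$\textup{(5)};
\item the same final use of strict monotonicity of the partial sums to recover the ordered word;
\item the same coset count.
\end{itemize}

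The difference lies in the substantive step \textup{(2)}$\Rightarrow$\textup{(3)}. The paper returns to the $\cO$-modules. It invokes \cite[Lemma~2.2]{LiXiD} together with the basicness of $M_{g\cdot u}$ and $M_{g\cdot v}$ to obtain $M_{g\cdot u}\simeq M_{g\cdot v}$. It then reads off the exponent sets by Krull--Schmidt over $\cO$, using that $\cO/\rad^r\cO\not\simeq\cO/\rad^{r'}\cO$ for $r\neq r'$.

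You stay on the algebra side throughout. Basic algebras that are $R$-linearly Morita equivalent are isomorphic. The multiset $\ms{\dim_R eAe}$, taken over a complete set of primitive orthogonal idempotents, is an isomorphism invariant. It equals $\ms{q_j\dim_R k}$, and $\dim_R k$ cancels because both algebras are built over the same $\cO$.

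Your route is self-contained. It shows explicitly why an abstract isomorphism still determines the exponent set, even though it need not respect the idempotent labels and need not come from an isomorphism of the underlying $\cO$-modules. The paper's route is shorter, but it delegates exactly this point to the cited lemma; your dimension count supplies it directly.
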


\begin{proof}
Theorem~\ref{thm:fixed-primary} gives
\textup{(3)}~$\Rightarrow$~\textup{(1)}$~\Rightarrow$~\textup{(2)}.
Conversely, the positive gaps make the exponent sequences
$q_1(g\cdot u)<\cdots<q_s(g\cdot u)$ and
$q_1(g\cdot v)<\cdots<q_s(g\cdot v)$ strictly increasing. Thus
$M_{g\cdot u}$ and $M_{g\cdot v}$ are basic modules over the same local
Nakayama algebra $\cO$. If \textup{(2)} holds, then the basicness of these
modules and \cite[Lemma~2.2]{LiXiD} give
$M_{g\cdot u}\simeq M_{g\cdot v}$. By the Krull--Schmidt theorem and the
fact that $\cO/\rad^r\cO\not\simeq\cO/\rad^{r'}\cO$ for $r\neq r'$, their
exponent sets coincide. Since both sequences are strictly increasing,
\[
 q_i(g\cdot u)=q_i(g\cdot v)\quad(1\leq i\leq s),
\]
and hence $g\cdot u=g\cdot v$. Finally, one has
\[
 g\cdot u=g\cdot v
 \Longleftrightarrow uv^{-1}\in H_g
 \Longleftrightarrow H_gu=H_gv.
\]
Thus two parameters have the same image precisely when they lie in the same
left coset of $H_g$, and each such coset has size
$|H_g|=\prod_a m_a!$.
\end{proof}

The number of parameters representing each isomorphism class is independent
of the choice of $h$ in $\Omega_g:=g\cdot\Sigma_s$. Indeed, if $h=g\cdot u$,
then $H_h=u^{-1}H_gu$. Consequently, for any $h,k\in\Omega_g$, exactly $|H_g|$
nonisomorphic basic tilting $A_h$-modules have opposite endomorphism algebras
isomorphic to $A_k$.

We now keep the mutation labels. Let $\mathcal G_g$ be the undirected
labeled graph with vertices $T_w$ and an edge
$T_w\xleftrightarrow{\ i\ }T_{ws_i}$. By Proposition~\ref{lem:labeled-model},
this is the labeled Hasse graph of $\tiltA A_g$ with the orientation
forgotten. The group $H_g$ acts by $h\cdot T_w=T_{hw}$.

The coset-graph construction goes back to Schreier \cite{Schreier}. For
graph-theoretic treatments retaining loops and multiple edges, see
\cite[Section~2]{GrossSchreier} and \cite[p.~403]{Cannizzo}.

For a subgroup $H\leq\Sigma_s$, put $X_H:=H\backslash\Sigma_s$. For
$1\leq i<s$, let
\[
 \rho_i^H:X_H\longrightarrow X_H,
  \ \ \rho_i^H(Hw)=Hws_i.
\]
We denote by
$
 \operatorname{Sch}(H\backslash\Sigma_s;s_1,\ldots,s_{s-1})
$
the undirected labeled multigraph with vertex set $X_H$ and edge set
$
 \bigsqcup_{i=1}^{s-1}X_H/\langle\rho_i^H\rangle.
$
An orbit in the $i$th summand is an edge with label $i$, and a singleton
orbit is a loop. The disjoint union keeps edges with different labels
distinct, so loops and parallel edges are retained. The quotient mutation
graph now has a direct group-theoretic description.

\begin{proposition}
\label{prop:Schreier}
Form the labeled quotient multigraph $H_g\backslash\mathcal G_g$ whose
vertices and edges are the $H_g$-orbits of the vertices and labeled edges of
$\mathcal G_g$, retaining all loops and parallel edges arising in the
quotient. Then
$
 H_g\backslash\mathcal G_g\simeq
 \operatorname{Sch}\bigl(H_g\backslash\Sigma_s;
 s_1,\ldots,s_{s-1}\bigr)
$
as undirected labeled multigraphs. Under this isomorphism, the vertex orbit
of $T_w$ corresponds to $H_gw$, and an $i$-labeled edge exchanges the entries
in positions $i$ and $i+1$ of $g\cdot w$. It is a loop exactly when these
entries are equal.
\end{proposition}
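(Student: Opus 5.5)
The plan is to build the isomorphism directly from the identification of $\mathcal G_g$ with a labeled Cayley-type graph on $\Sigma_s$, and then pass to $H_g$-orbits. By Proposition~\ref{lem:labeled-model}, the assignment $w\mapsto T_w$ is a bijection from $\Sigma_s$ onto the vertices of $\mathcal G_g$. Under this bijection, the $i$-labeled edges of $\mathcal G_g$ are the unordered pairs $\{w,ws_i\}$; in particular no $i$-edge is a loop in $\mathcal G_g$, since $ws_i\neq w$. Thus $\mathcal G_g$ is the undirected labeled multigraph with vertex set $\Sigma_s$ and, for each $i$, edge set $\Sigma_s/\langle r_i\rangle$, where $r_i(w)=ws_i$. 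The $H_g$-action $h\cdot T_w=T_{hw}$ is left multiplication, and it commutes with each $r_i$ because $h(ws_i)=(hw)s_i$. Hence it is an action by labeled graph automorphisms: it sends the $i$-edge $\{w,ws_i\}$ to the $i$-edge $\{hw,hws_i\}$.

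First, I would identify vertex orbits. The $H_g$-orbit of $T_w$ is $\{T_{hw}\mid h\in H_g\}$, which corresponds to the left coset $H_gw$. This gives a bijection from the vertex orbits to $X_g=H_g\backslash\Sigma_s$. By Theorem~\ref{thm:endomorphism-classes}, these orbits are exactly the fibers of $\Psi_g$, so the quotient agrees with the identification of vertices by isomorphic opposite endomorphism algebras.

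Second, and this is the main bookkeeping step, I would identify edge orbits for a fixed label $i$. I would define a map from $i$-edges of $\mathcal G_g$ to $X_g/\langle\rho_i\rangle$ by sending $\{w,ws_i\}$ to the $\rho_i$-orbit $\{H_gw,H_gws_i\}$. This map is well defined and $H_g$-invariant, and it is surjective. For injectivity on orbits, suppose $\{H_gw,H_gws_i\}=\{H_gw',H_gw's_i\}$. There are two cases. If $H_gw'=H_gw$, then $w'=hw$ for some $h\in H_g$, so the two edges are in the same orbit. If $H_gw'=H_gws_i$, then $w'=hws_i$, and the edge $\{w',w's_i\}=\{hws_i,hw\}$ is again $h$ applied to $\{w,ws_i\}$. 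Taking the disjoint union over $i$ preserves labels, so loops and parallel edges are retained on both sides. Incidence is compatible by construction, which gives the multigraph isomorphism.

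Finally, I would describe the edges concretely. By the right action, $g\cdot(ws_i)=(g\cdot w)\cdot s_i$, which is the word $g\cdot w$ with the entries in positions $i$ and $i+1$ exchanged. The edge is a loop exactly when $H_gws_i=H_gw$. By the equivalence (3)$\Leftrightarrow$(5) of Theorem~\ref{thm:endomorphism-classes}, this happens exactly when $g\cdot ws_i=g\cdot w$, that is, when $(g\cdot w)_i=(g\cdot w)_{i+1}$. I expect the only delicate point to be the edge-orbit injectivity: one must check that the swapped case $H_gw'=H_gws_i$ still produces the same edge orbit, including when that orbit is a loop.
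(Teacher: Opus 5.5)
Your proposal is correct and follows essentially the same route as the paper. You identify vertex orbits with the left cosets $H_gw$ (the fibers of $\Psi_g$ by Theorem~\ref{thm:endomorphism-classes}), send the $H_g$-orbit of each $i$-edge $\{T_w,T_{ws_i}\}$ to the $\rho_i$-orbit $\{H_gw,H_gws_i\}$, take the disjoint union over labels, and read off loops from $H_gws_i=H_gw\Leftrightarrow(g\cdot w)_i=(g\cdot w)_{i+1}$; your two-case injectivity check ($w'=hw$ or $w'=hws_i$, each giving $\{w',w's_i\}=h\cdot\{w,ws_i\}$) just spells out the step the paper states in a single sentence.
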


\begin{proof}
Since $(hw)s_i=h(ws_i)$, the $H_g$-action preserves labels and edges.
Moreover,
$g\cdot(hw)=(g\cdot h)\cdot w=g\cdot w$, so it preserves $\Psi_g$.
By Theorem~\ref{thm:endomorphism-classes}, its vertex orbits are precisely
the equivalence classes defined by isomorphic opposite endomorphism algebras.
For a fixed $i$, write
\[
 e_i(w):=\{T_w,T_{ws_i}\}.
\]
The assignment
$
 H_g\!\cdot e_i(w)\longmapsto\{H_gw,H_gws_i\}
$
is well defined: replacing $w$ by $hw$ with $h\in H_g$, or by $ws_i$,
does not change either side. Conversely, equality of the corresponding
$\rho_i^{H_g}$-orbits implies that the two edges lie in the same
$H_g$-orbit. Hence the assignment is a bijection from the $H_g$-orbits of
$i$-labeled edges of $\mathcal G_g$ to the orbits of $\rho_i^{H_g}$ on
$X_{H_g}$. Taking the
disjoint union over $i$ gives the asserted isomorphism of labeled
multigraphs. Finally, one has
\[
\begin{aligned}
 H_gw=H_gws_i
 &\Longleftrightarrow ws_iw^{-1}\in H_g\\
 &\Longleftrightarrow g\cdot ws_i=g\cdot w\\
 &\Longleftrightarrow (g\cdot w)_i=(g\cdot w)_{i+1}.
\end{aligned}
\]
Thus the projected edge is a loop exactly when the exchanged gaps are equal.
\end{proof}

Proposition~\ref{prop:Schreier} forgets the weak-order orientation. By
Definition~\ref{def:right-weak-order},
\[
 w\longrightarrow ws_i
 \quad\Longleftrightarrow\quad
 \ell(ws_i)=\ell(w)+1
 \quad\Longleftrightarrow\quad w(i)<w(i+1).
\]
Under the parametrization $w\mapsto T_w$, this direction is opposite to the
generation-order orientation of the tilting Hasse diagram. By
Proposition~\ref{prop:Schreier}, the lifts of the quotient $i$-edge
represented by $\{H_gw,H_gws_i\}$ are precisely the edges
$\{T_{hw},T_{hws_i}\}$ with $h\in H_g$. We say that the weak-order
orientation descends if every nonloop quotient edge has the same direction
for all its lifts. A subgroup of $\Sigma_s$ is
\emph{standard parabolic} if it is generated by a subset of
$\{s_1,\ldots,s_{s-1}\}$; see
\cite[Section~2.4]{BjornerBrenti}.

We can now characterize exactly when the weak-order orientation descends.

\begin{theorem}
\label{thm:orientation-descent}
The following statements are equivalent.
\begin{enumerate}[label={\textup{(\arabic*)}}]
\item The right weak-order orientation descends to every nonloop edge of
$H_g\backslash\mathcal G_g$.
\item For two distinct gap values $a$ and $b$, either every element of
$I_a$ is smaller than every element of $I_b$, or every element of $I_b$ is
smaller than every element of $I_a$.
\item Every set $I_a$ is an interval in $\{1,\ldots,s\}$.
\item The group $H_g$ is a standard parabolic subgroup of $\Sigma_s$ with
respect to $s_1,\ldots,s_{s-1}$.
\end{enumerate}
\end{theorem}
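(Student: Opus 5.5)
The plan is to translate each condition into a statement about positions of $\{1,\ldots,s\}$ and the orbits of $H_g$ on them. The implications $(1)\Leftrightarrow(2)$ will follow from an analysis of lifts, and $(2)\Leftrightarrow(3)\Leftrightarrow(4)$ are purely combinatorial.

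\emph{Lifts and loops.} By the discussion after Proposition~\ref{prop:Schreier}, the lifts of the quotient $i$-edge $\{H_gw,H_gws_i\}$ are the edges $\{T_{hw},T_{hws_i}\}$ with $h\in H_g$. The lift at $hw$ is oriented $hw\to hws_i$ exactly when $h(w(i))<h(w(i+1))$. Put $x=w(i)$ and $y=w(i+1)$. The edge is a loop exactly when $ws_iw^{-1}=(x\ y)\in H_g$, that is, when $g_x=g_y$. Hence the nonloop $i$-edges at $w$ correspond to pairs of positions $x\neq y$ with $g_x\neq g_y$.

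\emph{Reduction to position pairs.} Every ordered pair $(x,y)$ of distinct positions occurs as $(w(i),w(i+1))$ for some $w\in\Sigma_s$ and some $i$; for instance, take $i=1$ and any $w$ with $w(1)=x$ and $w(2)=y$ (this needs $s\geq2$, and the case $s=1$ is vacuous). Therefore (1) is equivalent to the following: for every pair $x,y$ with $g_x=a\neq b=g_y$, every $h\in H_g$ preserves the relative order of $x$ and $y$. Since $H_g=\prod_a\operatorname{Sym}(I_a)$ acts independently on the blocks, the pair $(h(x),h(y))$ ranges over all of $I_a\times I_b$. The relative order is therefore constant exactly when all elements of $I_a$ lie on one side of all elements of $I_b$. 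This proves $(1)\Leftrightarrow(2)$.

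\emph{Combinatorial equivalences.}
\begin{itemize}
\item For $(2)\Rightarrow(3)$: if some $I_a$ is not an interval, choose $x<z<y$ with $x,y\in I_a$ and $z\notin I_a$. Then $z\in I_b$ for some $b\neq a$, and this violates (2).
\item For $(3)\Rightarrow(2)$: the sets $I_a$ partition $\{1,\ldots,s\}$, so disjoint intervals are ordered blockwise.
\item For $(3)\Rightarrow(4)$: the symmetric group on an interval $[p,q]$ is generated by $s_p,\ldots,s_{q-1}$. Hence $H_g$ is generated by those $s_j$ for which $j$ and $j+1$ lie in the same $I_a$.
\item For $(4)\Rightarrow(3)$: a standard parabolic subgroup $\langle s_j\mid j\in J\rangle$ has as its orbits on $\{1,\ldots,s\}$ the maximal intervals joined by the $s_j$ with $j\in J$. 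The orbits of $H_g$ are exactly the sets $I_a$, so each $I_a$ is an interval.
\end{itemize}

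The only delicate step is the reduction in $(1)\Leftrightarrow(2)$. There I must make sure that the right action on parameters becomes the left action of $h$ on the values $w(i)$ and $w(i+1)$. I must also check that the loop condition removes exactly the pairs with equal gap values, so that no edge that should be tested is discarded. Everything else is bookkeeping.
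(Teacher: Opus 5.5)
Your proof is correct and follows essentially the paper's argument: the paper also reads the orientation of each lift $\{T_{hw},T_{hws_i}\}$ from whether $h(w(i))<h(w(i+1))$, uses that $H_g$ preserves each $I_a$ to get $(2)\Rightarrow(1)$, and settles $(2)\Leftrightarrow(3)\Leftrightarrow(4)$ by the same interval and orbit observations. The only difference is in the converse direction: you derive $(1)\Rightarrow(2)$ from the fact that $H_g$ acts transitively on $I_a\times I_b$, while the paper proves $(1)\Rightarrow(3)$ using the explicit witness $u(1)=p$, $u(2)=q$, $h=(p\ p')$ for $p<q<p'$, which is a special case of your argument.
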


\begin{proof}
Assume \textup{(2)}, and let the quotient edge represented by
$\{H_gw,H_gws_i\}$ be nonloop. Its lifts are
$\{T_{hw},T_{hws_i}\}$ with $h\in H_g$. The nonloop condition means that
$w(i)\in I_a$ and $w(i+1)\in I_b$ for distinct gap values $a,b$. Since
$h(I_a)=I_a$ and $h(I_b)=I_b$, condition \textup{(2)} gives
\[
 w(i)<w(i+1)
 \Longleftrightarrow
 h(w(i))<h(w(i+1)).
\]
Thus all lifts have the same direction, proving \textup{(2)}$\Rightarrow$\textup{(1)}.

Conversely, suppose that \textup{(3)} fails. Then
$p<q<p'$ for some $p,p'\in I_a$ and $q\in I_b$ with $a\neq b$.
Choose $u$ with $u(1)=p$ and $u(2)=q$, let $h=(p\ p')\in H_g$, and put
$v=hu$. Then $H_gu=H_gv$, while
\[
 u(1)=p<q=u(2), \ \ v(1)=p'>q=v(2).
\]
Since $a\neq b$, the edge
$H_gu\xleftrightarrow{\ 1\ }H_gus_1$ is nonloop. Moreover, $v=hu$ implies
that $u\xleftrightarrow{\ 1\ }us_1$ and
$v\xleftrightarrow{\ 1\ }vs_1$ are lifts of this same quotient edge. The
display shows that these lifts have opposite weak-order orientations.
Therefore the orientation does not descend, proving
\textup{(1)}$\Rightarrow$\textup{(3)}.

For a partition of a linearly ordered set, its parts are pairwise separated
if and only if each part is an interval; hence
\textup{(2)}$\Longleftrightarrow$\textup{(3)}. Finally, the orbits of a
standard parabolic subgroup of $\Sigma_s$ on $\{1,\ldots,s\}$ are intervals,
and
\[
 H_g=\left\langle s_j\ \middle|\
 \{j,j+1\}\subseteq I_a\text{ for some }a\right\rangle
\]
when every $I_a$ is an interval. Thus
\textup{(3)}$\Longleftrightarrow$\textup{(4)}.
\end{proof}

\begin{proof}[{\bf Proof of Theorem~\ref{thm:intro-endomorphism-classes}}]
The displayed equivalence follows from
Theorem~\ref{thm:endomorphism-classes}. The quotient identification is
Proposition~\ref{prop:Schreier}, and the orientation criterion is
Theorem~\ref{thm:orientation-descent}.
\end{proof}

The orientation criterion has the following parabolic and enumerative
consequences.

\begin{corollary}\label{cor:parabolic-quotient}
Assume that every set $I_a$ is an interval. Let $J_1,\ldots,J_t$ be these
interval blocks in their natural order, and put $n_\alpha=|J_\alpha|$. Then
the resulting oriented quotient is the parabolic weak order on multiset
permutations with multiplicities $n_1,\ldots,n_t$. Its rank generating
polynomial is
\[
 \frac{[s]_z!}{[n_1]_z!\cdots[n_t]_z!},
  \ \
 [r]_z!=\prod_{j=1}^r(1+z+\cdots+z^{j-1}),
\]
and its maximum rank is $\sum_{\alpha<\beta}n_\alpha n_\beta$.
\end{corollary}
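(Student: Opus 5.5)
The plan is to identify the oriented quotient with the classical parabolic quotient of $\Sigma_s$ and then import the standard enumeration from \cite{BjornerBrenti}. Since every $I_a$ is an interval, Theorem~\ref{thm:orientation-descent} shows that $H_g=W_J$ is the standard parabolic subgroup generated by the $s_j$ with $\{j,j+1\}$ contained in some block $J_\alpha$. Hence $H_g\cong\Sigma_{n_1}\times\cdots\times\Sigma_{n_t}$. Every left coset $H_gw$ contains a unique element of minimal length, namely the element of ${}^J\Sigma_s$, the set of minimal left coset representatives. For such a representative $w$ we have $w^{-1}(j)<w^{-1}(j+1)$ whenever $s_j\in J$, so $w$ is increasing on each block of positions in the inverse sense.

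The first step is to identify $X_g$ with multiset permutations. Send $H_gw$ to the word $\mathbf{b}(w)=(\beta_1,\ldots,\beta_s)$, where $\beta_k=\alpha$ when $w(k)\in J_\alpha$. This is a bijection onto words with $n_\alpha$ letters $\alpha$, and it matches $g\cdot w$ letter by letter, because $g$ takes a constant value on each $J_\alpha$. Under this bijection an $i$-labeled nonloop edge swaps two adjacent distinct letters. The descended orientation $w\to ws_i$, which holds when $w(i)<w(i+1)$ and is independent of the lift by Theorem~\ref{thm:orientation-descent}, occurs precisely when $\beta_i<\beta_{i+1}$: since the blocks are intervals in their natural order, $w(i)<w(i+1)$ is equivalent to $\beta_i<\beta_{i+1}$. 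The oriented quotient is therefore the graph in which a cover applies an adjacent swap creating one new inversion of the word. This is exactly the weak order on multiset permutations, equivalently the right weak order restricted to ${}^J\Sigma_s$ \cite[Section~2.4]{BjornerBrenti}.

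The second step is to show that the rank function is the length of the minimal representative, that is, the number of inversions of $\mathbf{b}(w)$. Each descended arrow increases this count by exactly one. Every word is reached from the sorted word by such moves, for example by bubble sort run in reverse. Hence the quotient is graded with rank $\operatorname{inv}(\mathbf{b})$.

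The final step is the enumeration. The generating function of inversions over multiset permutations is MacMahon's $q$-multinomial $[s]_z!/\prod_\alpha[n_\alpha]_z!$. This also follows from the factorization $\Sigma_s={}^J\Sigma_s\times W_J$ with lengths adding, which gives $\sum_{w}z^{\ell(w)}=\bigl(\sum_{{}^J\Sigma_s}z^{\ell}\bigr)\prod_\alpha[n_\alpha]_z!$. The maximum rank is attained by the reverse-sorted word, where every pair of letters from distinct blocks is inverted, giving $\sum_{\alpha<\beta}n_\alpha n_\beta$. I expect the main obstacle to be the orientation compatibility in the first step: the orientation must be checked coset by coset rather than only on lifts. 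Theorem~\ref{thm:orientation-descent} already handles this, so the remaining work is bookkeeping with the minimal representatives.
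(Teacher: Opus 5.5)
Your proposal is correct and follows essentially the same route as the paper. Theorem~\ref{thm:orientation-descent} makes $H_g$ standard parabolic. Each coset $H_gw$ is then encoded as a multiset word by replacing the values in $J_\alpha$ with the letter $\alpha$, so that nonloop covers become adjacent swaps $\alpha\beta\mapsto\beta\alpha$ with $\alpha<\beta$. The rank is the inversion number, which gives the $z$-multinomial and maximum rank $\sum_{\alpha<\beta}n_\alpha n_\beta$.

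Your extra checks make explicit what the paper delegates to Bj\"orner--Wachs: minimal coset representatives with $\ell(w)=\operatorname{inv}(\mathbf b(w))$, gradedness, and the length-additive factorization $\Sigma_s={}^J\Sigma_s\times W_J$.
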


\begin{proof}
By Theorem~\ref{thm:orientation-descent}, $H_g$ is standard parabolic, so the
quotient is the classical parabolic weak order
\cite[Sections~1--2 and~9]{BjornerWachs}. Replace each value in $J_\alpha$
by the letter $\alpha$. Then $H_gw$ becomes a multiset word with
multiplicities $n_1,\ldots,n_t$, and every nonloop cover swaps adjacent
letters $\alpha<\beta$ to $\beta\alpha$. Thus the descended order is the
usual weak order on these multiset permutations. Its rank is the inversion
number. Hence its rank generating polynomial is the stated $z$-multinomial,
and its maximum rank is $\sum_{\alpha<\beta}n_\alpha n_\beta$.
\end{proof}

The next example makes the loops and multiple edges in
Proposition~\ref{prop:Schreier} explicit and also illustrates the descended
parabolic weak order.

\begin{example}\label{ex:schreier-aaab}
Let $a,b\in\mathbb Z_{>0}$ with $a\neq b$, and consider the gap word
$(a,a,a,b)$. Its stabilizer is
\[
 H_{(a,a,a,b)}
 =\operatorname{Sym}\{1,2,3\}
 =\langle s_1,s_2\rangle\simeq\Sigma_3.
\]
The four left cosets and their gap words are
\[
\begin{array}{c|c}
H_{(a,a,a,b)}&(a,a,a,b)\\
H_{(a,a,a,b)}s_3&(a,a,b,a)\\
H_{(a,a,a,b)}s_3s_2&(a,b,a,a)\\
H_{(a,a,a,b)}s_3s_2s_1&(b,a,a,a).
\end{array}
\]
In these four rows, the unique entry $b$ occurs in positions $4,3,2,1$,
respectively. Equivalently, the left coset in the row with $b$ in position
$p$ consists of the parameters $w$ satisfying $w(p)=4$.
The nonloop edges are
\[
 H_{(a,a,a,b)}
 \xleftrightarrow{\ 3\ }H_{(a,a,a,b)}s_3
 \xleftrightarrow{\ 2\ }H_{(a,a,a,b)}s_3s_2
 \xleftrightarrow{\ 1\ }H_{(a,a,a,b)}s_3s_2s_1.
\]
There are loops with labels $1,2$ at $H_{(a,a,a,b)}$, a loop with label
$1$ at $H_{(a,a,a,b)}s_3$, a loop with label $3$ at
$H_{(a,a,a,b)}s_3s_2$, and loops with labels $2,3$ at
$H_{(a,a,a,b)}s_3s_2s_1$. Figure~\ref{fig:schreier-aaab} therefore shows
every edge of
$\operatorname{Sch}(H_{(a,a,a,b)}\backslash\Sigma_4;s_1,s_2,s_3)$,
including its six loops. The $24$ tilting parameters give four isomorphism
classes of opposite endomorphism algebras, with six parameters in each
class. In particular, each loop records a nontrivial tilting mutation that
leaves the isomorphism class of the target algebra unchanged.

Since the two fibers $\{1,2,3\}$ and $\{4\}$ are intervals, the weak-order
orientation descends. It is
\[
 H_{(a,a,a,b)}
 \xrightarrow{\ 3\ }H_{(a,a,a,b)}s_3
 \xrightarrow{\ 2\ }H_{(a,a,a,b)}s_3s_2
 \xrightarrow{\ 1\ }H_{(a,a,a,b)}s_3s_2s_1,
\]
whereas the tilting generation-order orientation is the reverse. The ranks
of $H_{(a,a,a,b)}$, $H_{(a,a,a,b)}s_3$,
$H_{(a,a,a,b)}s_3s_2$, and $H_{(a,a,a,b)}s_3s_2s_1$ are $0,1,2,3$,
respectively, and hence the rank generating polynomial is
\[
 \frac{[4]_z!}{[3]_z!}=1+z+z^2+z^3.
\]
\end{example}

\begin{figure}[!htbp]
\centering
\begin{tikzpicture}[
  vertex/.style={draw=black!65,fill=tiltingfill,rounded corners,align=center,
    minimum width=3.1cm,minimum height=9mm,inner sep=2pt,font=\scriptsize},
  elabel/.style={fill=white,inner sep=1pt,font=\small},
  every loop/.style={min distance=8mm,looseness=7}
]
\node[vertex] (stabilizerCoset) at (0,0)
  {$H_{(a,a,a,b)}$\\[-1mm]$(a,a,a,b)$};
\node[vertex] (rightSthreeCoset) at (3.7,0)
  {$H_{(a,a,a,b)}s_3$\\[-1mm]$(a,a,b,a)$};
\node[vertex] (rightSthreeStwoCoset) at (7.4,0)
  {$H_{(a,a,a,b)}s_3s_2$\\[-1mm]$(a,b,a,a)$};
\node[vertex] (rightSthreeStwoSoneCoset) at (11.1,0)
  {$H_{(a,a,a,b)}s_3s_2s_1$\\[-1mm]$(b,a,a,a)$};
\draw[semithick,draw=mutationthree] (stabilizerCoset) --
  node[elabel,text=mutationthree,above] {$3$} (rightSthreeCoset);
\draw[semithick,draw=mutationtwo] (rightSthreeCoset) --
  node[elabel,text=mutationtwo,above] {$2$} (rightSthreeStwoCoset);
\draw[semithick,draw=mutationone] (rightSthreeStwoCoset) --
  node[elabel,text=mutationone,above] {$1$} (rightSthreeStwoSoneCoset);
\path
  (stabilizerCoset) edge[loop below,semithick,draw=mutationone]
       node[elabel,text=mutationone,below] {$1$} (stabilizerCoset)
       edge[loop above,semithick,draw=mutationtwo]
       node[elabel,text=mutationtwo,above] {$2$} (stabilizerCoset)
  (rightSthreeCoset) edge[loop above,semithick,draw=mutationone]
       node[elabel,text=mutationone,above] {$1$} (rightSthreeCoset)
  (rightSthreeStwoCoset) edge[loop below,semithick,draw=mutationthree]
       node[elabel,text=mutationthree,below] {$3$} (rightSthreeStwoCoset)
  (rightSthreeStwoSoneCoset) edge[loop above,semithick,draw=mutationtwo]
       node[elabel,text=mutationtwo,above] {$2$} (rightSthreeStwoSoneCoset)
       edge[loop below,semithick,draw=mutationthree]
       node[elabel,text=mutationthree,below] {$3$} (rightSthreeStwoSoneCoset);
\end{tikzpicture}
\caption{The complete labeled Schreier multigraph
$\operatorname{Sch}
(H_{(a,a,a,b)}\backslash\Sigma_4;s_1,s_2,s_3)$ for the gap word
$(a,a,a,b)$. The horizontal edges are the nonloops; the six curved edges
are the distinct labeled loops. Edge colors agree with the numerical
mutation labels.}
\label{fig:schreier-aaab}
\end{figure}

We conclude with a remark on the closed walks arising from the stabilizer
$H_g$ and on a related coherence question for mutation-induced derived
equivalences.

\begin{remark}
A mutation word $s_{i_1}\cdots s_{i_r}=w$ determines a walk in
$\mathcal G_g$ from $T_1$ to $T_w$. This walk returns to its initial
tilting vertex if and only if $w=1$. Its image in
$H_g\backslash\mathcal G_g$ is closed if and only if
$H_gw=H_g$, or equivalently, $w\in H_g$. By
Theorem~\ref{thm:endomorphism-classes}, this is also equivalent to
$
 \End_{A_g}(T_w)^{\op}\simeq A_g.
$

If equal gaps occur in positions $p<q$, then
$
 (p\,q)
 =s_p\cdots s_{q-2}s_{q-1}s_{q-2}\cdots s_p
 \in H_g.
$
Hence the corresponding mutation sequence projects to a nontrivial closed
walk in the quotient graph. Since
$H_g=\prod_a\operatorname{Sym}(I_a)$, such transpositions generate $H_g$.

These observations concern only the isomorphism classes of opposite
endomorphism algebras. They do not imply that different reduced mutation
paths with the same endpoints induce naturally isomorphic derived
equivalences. In the saturated split case, a braid-group model is available
\cite[Theorems~1.1 and~1.2]{Sauter}. Whether an analogous coherence
statement holds for an arbitrary positive gap word remains open.
\end{remark}

\section*{Acknowledgments}

Jiangsheng Hu was supported by the National Natural Science Foundation of
China (No.~12571035).
Xin Ma was supported by the Natural Science Foundation of Henan
(No.~262300421832) and Central Plains Science and Technology Innovation Youth
Top-notch Talent (No.~2024ZYBJRC002).
Jinbi Zhang was supported by the National Natural Science Foundation of China
(No.~12401038).
Tiwei Zhao
was supported by the National Natural Science Foundation of China (Grant
No.~12471036) and the Hubei Provincial Natural Science Foundation of China
(No.~2026AFA094).
The authors thank Professor Changchang Xi for
valuable comments.

\end{document}